\newtheorem{theo}{Theorem}[section]
\newtheorem{theorem}[theo]{Theorem}
\newtheorem{lemma}[theo]{Lemma}
\newtheorem{proposition}[theo]{Proposition}
\newtheorem{remark}[theo]{Remark}
\newtheorem{example}[theo]{Example}
\def\qed{\hfill $\square$ \goodbreak \bigskip}
\newtheorem{assumption}{Assumption}
\newcommand{\be}{\begin{equation}}
\newcommand{\ee}{\end{equation}}
\newcommand{\bes}{\begin{equation*}}
\newcommand{\ees}{\end{equation*}}
\newcommand{\bea}{\begin{eqnarray}}
\newcommand{\eea}{\end{eqnarray}}
\newcommand{\R}{\mathbb{R}}
\newcommand{\IN}{\mathbb{N}}
\def\exponentTheo{\chi}
\def \IR{\mathbb R}
\def \IN{\mathbb N}
\def \IE{\mathbb E}
  \newcommand{\defEns}[1]{\left\lbrace #1 \right\rbrace }
\def\bigO{\mathcal{O}}
\def\Repsilon{R_{\epsilon}}
\def\Rqtransition{R_q}
\def\iid{i.i.d.}
\newcommandx\sequence[3][2=,3=]
\newcommandx{\sequencen}[2][2=n\in\N]{\ensuremath{\{ #1, \eqsp #2 \}}}
\newcommandx\sequenceDouble[4][3=,4=]
\newcommandx{\sequencenDouble}[3][3=n\in\N]{\ensuremath{\{ (#1_{n},#2_{n}), \eqsp #3 \}}}
\def\Varfun{\mathrm{V}}
\def\rme{e}
\def\rmd{\mathrm{d}}
\def\Sp{\operatorname{Sp}}
\def\Id{\operatorname{I}_d}
\newcommand{\plusinfty}{+\infty}
\def\ie{\textit{i.e.}}
\def\eqsp{\;}
\newcommand{\abs}[1]{\left\vert #1 \right\vert}
\newcommand{\norm}[1]{\left\Vert #1 \right \Vert}
\def\Mrm{\mathrm{M}}
\def\Arm{\mathrm{A}}
\newcommand{\lambdaMin}[1]{\lambda_{\min}\left(#1\right)}
\newcommand{\lambdaMax}[1]{\lambda_{\max}\left(#1\right)}
\newcommand{\coint}[1]{\left[#1\right)}
\newcommand{\ocint}[1]{\left(#1\right]}
\newcommand{\ooint}[1]{\left(#1\right)}
\newcommand{\ccint}[1]{\left[#1\right]}
\newcommandx\lapGrad[2][1=f,2=\varx]{\overrightarrow{\Delta}  #1 (#2)}
\newcommand{\1}{\mathds{1}}
\def\Escr{\mathscr{E}}
\def\Pscr{\mathscr{P}}
\def\Ecal{\mathcal{E}}
\def\qrm{\mathrm{q}}
\def\rrm{\mathrm{r}}
\def\prm{\mathrm{p}}
\def\kbf{\mathbf{k}}
\def\Var{\operatorname{Var}}
\newcommand{\ps}[2]{\left \langle#1,#2 \right\rangle}
\def\Leb{\operatorname{Leb}}
\def\smallSet{\mathcal{C}}
\def\xrm{\mathrm{X}}
\def\Prm{\mathrm{P}}
\newcommand{\proba}[1]{\mathbb{P}\left[ #1 \right]}
\def\fMALA{\operatorname{fM}}
\def\mMALA{\operatorname{fM}}
\def\mO{\operatorname{mO}}
\def\bO{\operatorname{bO}}
\def\gbO{\operatorname{gbO}}
\newcommand{\opT}{\mathscr{T}}
\def\varx{x}
\def\vary{y}
\def\varX{X}
\def\rset{\mathbb{R}}
\def\nset{\mathbb{N}}
\def\GaussianRV{\xi}
\def\procLine{\Gamma}
\def\acceptMean{a}
\def\speedMeasure{\mathrm{h}}
\def\TV{\operatorname{TV}}
\newcommand{\defSet}[1]{\left \lbrace  #1 \right \rbrace}
\newcommand{\expeLaw}[2]{\mathbb{E} \left[ #2 \right]}
\newcommand{\bracket}[1]{\left( #1 \right)}
\newcommand{\lambdaHessianfMin}{\lambda_{\min}\left(D f(\varx)\right)}
\newcommand{\lambdaHessianfMax}{\lambda_{\max}\left(D f(\varx)\right)}
\newcommand{\nArrow}{\vec{n}}
\newcommand{\normalX}{\nArrow_{x}}
\def\Gscr{\mathscr{G}}
\def\Bscr{\mathscr{B}}
\def\varw{w}
\def\brm{\mathrm{b}}
\newcommand{\testFunction}{\psi}
\newcommand{\Wbar}{\bar{\mathsf{M}}}
\def\rmi{\mathrm{i}}
\newcommand{\LRARR}[4]{{\mbox{\raise 0.4 mm \hbox{$#1$}}} \;
  \mathop{\stackrel{\displaystyle\longrightarrow}\longleftarrow}^{#3}_{#4}
  \; {\mbox{\raise 0.4 mm\hbox{$#2$}}}}
\title{Fast Langevin based algorithm for MCMC in high dimensions
}
\author{
Alain Durmus\textsuperscript{1},
Gareth O. Roberts\textsuperscript{2}\footnote{Supported by EPSRC under grants EP/K014463/1 and EP/D002060/1.},
Gilles Vilmart\textsuperscript{3}\footnote{Partially supported by the Swiss National Science Foundation, grants 200020\_144313/1 and 200021\_162404.}, 
and Konstantinos C. Zygalakis\textsuperscript{4}\footnote{Partially supported by a grant  from the Simons Foundation and  by the Alan Turing Institute under the EPSRC grant EP/N510129/1. Part of this work was done during the author's stay at the Newton Institute for the program Stochastic Dynamical Systems  in Biology:  Numerical Methods and Applications.}
}
\begin{document}
\footnotetext[1]{LTCI, Telecom ParisTech
46 rue Barrault, 75634 Paris
Cedex 13, France.
 alain.durmus@telecom-paristech.fr }

 \footnotetext[2]{Dept of Statistics
University of Warwick,
Coventry,
CV4 7AL, UK.  Gareth.O.Roberts@warwick.ac.uk 
 }

\footnotetext[3]{Universit\'e de Gen\`eve, Section de math\'ematiques, 2-4 rue du Li\`evre, CP 64, 1211 Gen\`eve 4, Switzerland.
 Gilles.Vilmart@unige.ch }

\footnotetext[4]{School of Mathematics and Maxwell Institute of Mathematical
Sciences, University of Edinburgh, James Clerk Maxwell Building,
Peter Guthrie Tait Road, Edinburgh EH9 3FD, UK. K.Zygalakis@ed.ac.uk 
}
\maketitle

\begin{abstract}
We introduce new Gaussian proposals to improve the efficiency of the standard Hastings-Metropolis algorithm in Markov chain Monte Carlo (MCMC) methods, used for the sampling from a target distribution in large dimension $d$. The improved complexity is $\mathcal{O}(d^{1/5})$ compared to the complexity $\mathcal{O}(d^{1/3})$ of the standard approach.
We prove an asymptotic diffusion limit
theorem and show that the relative efficiency of the algorithm can be characterised by its overall
acceptance rate (with asymptotical value 0.704), independently of the target distribution.
Numerical experiments confirm our theoretical findings.

\smallskip
\noindent
{\it Keywords:\,}
weak convergence, Markov Chain Monte Carlo, diffusion limit, exponential ergodicity.
\smallskip

\noindent
{\it AMS subject classification (2010):\,}
60F05, 65C05

%60H15, 60H35, 37M25
%65L20
%65N30, 65M60 (74Q10, 35K15).
\end{abstract}

\section{Introduction}
Consider a probability measure $\pi$ on $\rset^d$ with density again denoted by $\pi$ with respect to the Lebesgue measure. The Langevin diffusion $\{ x_t ,\ t \geq 0 \}$ associated with $\pi$ is the  solution of the following stochastic differential equation:
\begin{equation} \label{eq:langevinA}
\rmd x_t= %(1/2)
\frac12 {\Sigma \nabla \log{\pi(x_t)} \rmd t}+ \Sigma^{1/2}\rmd W_t \eqsp,
\end{equation}
where $\{W_{t}, \ t \geq 0\}$ is a standard $d$-dimensional Brownian
motion, and $\Sigma$ is a given positive definite symmetric 
matrix. Under appropriate assumptions \cite{Has80} on $\pi$, it can be
shown that the dynamic generated by \eqref{eq:langevinA} is ergodic
 with unique invariant distribution $\pi$. This is a key property
of \eqref{eq:langevinA} and taking advantage of it permits to sample
from the invariant distribution $\pi$. 
 In particular, if one could solve \eqref{eq:langevinA} %\modk
{analytically} and then take time $t$ to infinity
then it would be possible to generate samples from  $\pi$.
 However, there exists a
limited number of cases \cite{KlP92} where such an analytical formula
exists.  A standard approach is to discretise \eqref{eq:langevinA}
using a one step integrator. 
The drawback of this approach is that it introduces
a bias, because in general $\pi$ is not invariant with respect to the Markov chain defined by the discretization, \cite{TaT90,MST10,AVZ14a}.
In addition, the discretization might fail to be ergodic \cite{RT96}, even though \eqref{eq:langevinA} is geometrically ergodic.
%One way to deal with this issue is to approximate numerically  \eqref{eq:langevinA} for large times. A fundamental question in this case is if the underlying numerical discretisation is itself ergodic \cite{RT96,MT05}
%as there exist cases where even though \eqref{eq:langevinA} is geometrically ergodic, its discretisation might fail to be ergodic \cite{RT96}. Furthermore, even when the numerical discretisation is (geometrically) ergodic the corresponding invariant measure doesn't necessarily  coincide with $\pi$ \cite{TaT90,MST10,AVZ14a}.

An alternative way of sampling from $\pi$, which does not face the bias issue introduced by discretizing \eqref{eq:langevinA}, is by using the Metropolis-Hastings algorithm \cite{Ha70}.
The idea is to construct a Markov chain $\{x_j, \ j \in \mathbb{N}\}$, where at each step $j\in\mathbb{N}$, % a proposal sample is accepted or rejected with a suitable probability.
given $x_{j}$, a new candidate $y_{j+1}$ is generated from a proposal density $q(x_{j}, \cdot)$. This candidate is then accepted $(x_{j+1}=y_{j+1})$ with probability $\alpha(x_{j},y_{j+1})$ given by
\begin{equation}
\label{eq:acceptance_ratio}
\alpha(x,y)=\min \left(1, \frac{\pi(y)q(y,x)}{\pi(x)q(x,y)}\right) \eqsp,
\end{equation}
 and rejected $(x_{j+1}=x_{j})$ otherwise.
%The combination of the proposal from a Markov kernel and the accept-reject criteria yields a modified Markov chain.
The resulting Markov chain
$\sequence{x}[j][\nset]$ is reversible with respect to $\pi$ and under mild assumptions is ergodic \cite{L08,RC05}.

% One crucial component for the efficiency of this algorithm is the value of the accept-reject probability $\alpha(x,y)$, which depends on the choice of the transition kernel $q(x,y)$. 

The simplest proposals are random walks for which $q$ is the transition kernel associated with the proposal
\begin{equation} \label{eq:RWM}
y=x+\sqrt{h}\Sigma^{1/2} \xi \eqsp,
\end{equation}
where $\xi$ is a standard Gaussian random variable  in $\IR^{d}$, and leads to the well
known Random Walk Metropolis Algorithm (RMW). %\modk
This proposal is very simple to implement, but it suffers from (relatively) high rejection rate, due to the fact that it does not use  information about $\pi$ to construct appropriate candidate moves.

 Another family of proposals commonly used, is based on the Euler-Maruyama discretization of \eqref{eq:langevinA}, for which $q$ is the transition kernel associated with the proposal
\begin{equation} \label{eq:MALA}
y=x+(h/2) \Sigma\nabla \log{\pi(x)}+\sqrt{h}\Sigma^{1/2}\xi \eqsp,
\end{equation}
where $\xi$ is again a standard Gaussian random variable in $\IR^{d}$. This algorithm is also known as the Metropolis Adjusted Langevin Algorithm (MALA), %\modk
and it is well-established that it has better convergence properties than the RWM algorithm in general.
%\footnote{We will quantify this latter on in this paper}.
%One intuitive way of understanding this, is that for this proposal the random walk is not unbiased as in the case of RWM, but instead is
This method directs the proposed moves towards areas of high probability for the distribution $\pi$, using the gradient of $\log \pi$. There is now a 
growing literature on gradient-based MCMC algorithms, as exemplified through the two papers \cite{girolami:calderhead:2011,cotter:roberts:stuart:white:2014} and the 
references therein. We also mention here function space MCMC methods \cite{cotter:roberts:stuart:white:2014}. Assuming that the target measure has a density 
w.r.t.~a Gaussian measure on a Hilbert space, these algorithms are defined in infinite dimension and avoid completely the dependence on the 
dimension $d$ faced by standard MCMC algorithms.

A natural question  is if one can improve on the behaviour of MALA by incorporating more information about the properties of $\pi$ in their proposal.
A first attempt would be to use as proposal a one-step integrator with high weak order for \eqref{eq:langevinA},
%One possible way of approaching this question, is to follow a numerical analysis approach, in which one uses a higher order weak discretisation of \eqref{eq:langevinA}  as the underlying proposal,
as suggested in the discussion of \cite{girolami:calderhead:2011}.
Although this turns out to not be sufficient,
%Although this approach does not quite work out,
we shall show that,
%this approachSurprisingly enough this doesn't work, but instead
by slightly modifying this approach and not focusing on the weak order itself, we are able to construct a new proposal with better convergence properties than MALA.
We mention that an analogous proposal is presented independently in \cite{FaS15} in a different context to improve the
strong order of convergence of MALA.

Thus our main contribution in this paper is the introduction and theoretical analysis of the fMALA algorithm ({\em fast} MALA), and its cousins which will be introduced in Section \ref{sec:theory}. These algorithms provide for the first time, implementable gradient-based MCMC algorithms which can achieve convergence in $\bigO(d^{1/5})$ iterations, thus improving on the $\bigO(d^{1/3})$ of MALA and many related methods. These results are demonstrated as a result of high-dimensional diffusion approximation results. As well as giving these order of magnitude results for high-dimensional problems, we shall also give stochastic stability results, specifically results about the geometric ergodicity of the algorithms we introduce under appropriate regularity conditions.
 
Whilst the algorithms we describe have clear practical relevance for MCMC use, it is important to recognise the limitations of this initial study of these methodologies, and we shall note and comment on two which are particularly important. In order to obtain the diffusion limit results we give, it is necessary to make strong assumptions about the structure of the sequence of target distributions as $d$ increases. In our analysis we assume that the target distribution consists of $d$ \iid~components as in the initial studies of both high-dimensional RWM and MALA algorithms \cite{RGG97,RR98}.
Those analyses were subsequently extended (see for example \cite{roberts2001optimal}) and supported by considerable empirical evidence from applied MCMC use. We also expect that in the context of this paper, our conclusions should provide practical guidance for MCMC practitioners well beyond the cases where rigorous results can be demonstrated, and we provide an example to illustrate this in Section \ref{sec:numer}.

Secondly, our diffusion limit results depend on the initial distribution of the Markov chain being the target distribution $\pi $, clearly impractical in real MCMC contexts.  The works \cite{christensen2005scaling,jourdain2014optimal} study the case of MCMC algorithms (specifically RWM and MALA  algorithms) started away from stationarity. 
On the one hand, it turns out that MALA algorithms are less robust than RWM when starting at under-dispersed values in that scaling strategies.
Indeed, optimising mixing in stationarity can be highly suboptimal in the transient phase, often with initial moves having exponentially small acceptance probabilities (in $d$). On the other hand, a slightly more conservative strategy for MALA still achieves $\bigO(d^{1/2})$ compared to $\bigO(d)$ for RWM. It is natural to expect the story for fMALA to be at least as involved as that for MALA, and we give some empirical evidence to support this in the simulations study of Section \ref{sec:numer}.
Future work will underpin these investigations with theoretical results analogous to those of \cite{christensen2005scaling,jourdain2014optimal}.
 From a practical MCMC perspective however, it should be noted that strategies which mix MALA-transient optimal scaling with fMALA-stationary optimal scaling will perform in a robust manner, both in the transient and stationary phases. 
 Two of these effective strategies are illustrated in Section \ref{sec:numer}.

%\modk{We mention here, than trying to improve upon the behaviour of MALA is an active research area...QQQQ}

The paper is organised as follows. In Section \ref{sec:der}  we provide a
heuristic for the choice of the parameter $h$ used in the proposal as a function of the dimension $d$ of the target and present three different proposals that have better complexity scaling properties than RWM and MALA.  In Section \ref{sec:theory}, we present fMALA and its variants, and prove our main results for the introduced methods.   Section \ref{sec:convergence_erg} investigates the ergodic properties of the different proposals for a wide variety of target densities $\pi$. %\modk
{Finally, in Section \ref{sec:numer} we  present numerical results that illustrate our theoretical findings.}

\section{Preliminaries}
\label{sec:der}

In this section we discuss some key issues regarding the convergence of MCMC algorithms. In particular, in Section \ref{ssec:cc} we discuss some issues related to the computational complexity of MCMC methods in high dimensions, while in Section \ref{ssec:fd} we present a useful heuristic for understanding the optimal scaling of a given MCMC proposal, and based on this heuristic formally derive a new proposal with desirable scaling properties.

\subsection{Computational Complexity}
\label{ssec:cc}
Here we discuss a heuristic approach for selecting the parameter $h$ in all proposals mentioned above as the dimension of the space $d$ goes to infinity. In particular, we choose  $h$ proportional to an inverse power of the dimension $d$ such that
\begin{equation} \label{eq:scaling}
h \propto d ^{-\gamma} \eqsp.
\end{equation}
This implies that the proposal $y$ is now a function of: $(i)$ the current state $x$; $(ii)$ the parameter $\gamma$ through the scaling above; and $(iii)$ the random variable $\xi$ which appears in all the considered proposals. Thus $y=y(x,\xi;\gamma)$. Ideally $\gamma$ should be as small as possible so the chain makes large steps and samples are correlated as little as possible. At the same time, the acceptance probability should not degenerate to 0 as $d \rightarrow \infty$, also to prevent high correlation amongst samples. This naturally leads to the definition of a critical exponent $\gamma_{0}$ given by
\begin{equation}
\label{eq:def_gamma_0}
\gamma_{0}=\inf_{\gamma_{c} \geq 0} \left\{ \gamma_{c}:  \liminf_{d \rightarrow \infty}  \IE \left[ \alpha(x,y) \right] >0\ , \quad \forall \gamma \in [\gamma_{c},\infty)\right\} \eqsp.
\end{equation}
The expectation here is with respect to $x$ distributed according to $\pi$ and $y$ chosen from the proposal distribution. In other words, we take 
the largest possible value for $h$, as function of $d$, constrained by asking that the average acceptance probability is bounded away from zero, 
uniformly in $d$. The time-step restriction \eqref{eq:scaling} can be interpreted as a kind of Courant-Friedrichs-Lewy restriction arising in the 
numerical time-integration of PDEs.

If $h$ is of the form \eqref{eq:scaling}, with $\gamma \geq \gamma_0$, the
  acceptance probability does not  degenerate, and  the Markov chain arising from the Metropolis-Hastings
  method can be thought of as an approximation of the Langevin SDE
  \eqref{eq:langevinA}. This Markov chain travels with time-steps $h$ on the paths of
  this SDE, and therefore requires a minimal number of steps to  reach timescales of
  $\mathcal{O}(1)$ given by
\begin{equation} \label{eq:Md}
M(d)=d^{\gamma_{0}} \eqsp.
\end{equation}
  If it
takes $\mathcal{O}(1)$ for the limiting SDE to reach stationarity,
then we obtain that $M(d)$ gives the computational complexity of the algorithm.\footnote{In this definition
  of the cost one does not take into account the cost of generating a
  proposal. This is discussed in Remark \ref{rem:comp}.}

% \modk{Carrying  this analogy further, we see that the number of steps required for the Markov chain arising from the Metropolis-Hastings method  to reach timescales of $\mathcal{O}(1)$ is given by 
% \begin{equation} \label{eq:Md}
% M(d)=d^{\gamma_{0}} \eqsp.
% \end{equation}
% This quantity  can be thought of being equivalent to the  computational complexity of the algorithm\footnote{In this definition of the cost one doesn't take into account the cost of generating a proposal. This is discussed in Remark \ref{rem:comp}}.   In particular,  the Markov chain arising from the Metropolis-
% Hastings method approximates a Langevin SDE;  one could think of the Markov chain as travelling with time-steps $h$ on the paths of this SDE.  
% It takes $\mathcal{O}(1)$ for the limiting SDE  to reach stationarity,  so in terms of the time-step $h$ we obtain the expression for $M(d)$ above. 
% }
If we now consider the case of a product measure where
\begin{equation} \label{eq:prod}
\pi(x)=\pi_d(x)=Z_d\prod_{i=1}^{d}e^{g(x_{i})} \eqsp,
\end{equation}
and $Z_d$ is the normalizing constant,
then it is well known \cite{RGG97} that for the RWM it holds $\gamma_{0}=1$, while for  MALA it holds
$\gamma_{0}=1/3$ \cite{RR98}. In the next subsection, we recall the main ideas  that
allows one to obtain these scalings (valid also for some non-product cases), and derive a
new proposal which we will call the fast Metropolis Adjusted Langevin algorithm (fMALA) and which satisfies $\gamma_{0}=1/5$ in the product case, i.e. it has a better convergence scaling.
\subsection{Formal derivation}
\label{ssec:fd}
Here we explain the main idea that is used for proving the scaling of a Gaussian\footnote{We point out that Gaussianity here is not necessary but it greatly simplifies the calculations.}  proposal in high dimensions. In particular, the proposal $y$ is now of the form
\begin{equation} \label{eq:gaussian_proposal}
y= \mu(x,h)+S(x,h)\xi \eqsp,
\end{equation}
where $\xi\sim \mathcal{N}(0,\Id)$ is a standard $d$ dimensional Gaussian random variable. Note that in the case of the RWM,
\[
\mu(x,h)=x, \quad S(x,h)=\sqrt{h}\Sigma^{1/2} \eqsp,
\]
while in the case of MALA
\[
\mu(x,h)=x+(h/2)\Sigma\nabla\log{\pi(x)},  \quad S(x,h)=\sqrt{h}\Sigma^{1/2} \eqsp.
\]
%The Gaussianity of the proposal implies that 
The acceptance probability   can  be written in the form
\[
\alpha(x,y)=\min \{1,\exp(R_{d}(x,y))\} \eqsp,
\]
for some function $R_{d}(x,y)$ which depends on the Gaussian proposal \eqref{eq:gaussian_proposal}. Now using the fact that $y$ is related to $x$ according to \eqref{eq:gaussian_proposal}, $R_{d}(x,x)=0$, together with appropriate smoothness properties on the function $g(x)$, one can expand $R_{d}$ in powers of $\sqrt{h}$ using a Taylor expansion
\begin{equation} \label{eq:expansion}
R_{d}(x,y)= \sum_{i=1}^{k} \sum_{j=1}^{d}h^{i/2}C_{ij}(x,\xi)+ h^{(k+1)/2}L_{k+1}(x,h^{*},\xi) \eqsp.
\end{equation}
It turns out \cite{BS08} that the scaling associated with each proposal relates directly with how many of the $C_{ij}$ terms are zero in \eqref{eq:expansion}. This simplifies if we further assume that $\Sigma=\Id$ in \eqref{eq:langevinA} and  that $\pi$ satisfies \eqref{eq:prod}, because we get for all $i \in \{1,\cdots,k \}$, $j \in \{1,\cdots,j\}$, $C_{ij}(x,\xi)=C_{i}(x_{j},\xi_j)$
and \eqref{eq:expansion} can be written as

\begin{equation} \label{eq:expansion_2}
R_{d}(x,y)= \sum_{i=1}^{k} \sum_{j=1}^{d}\frac{\sqrt{h^{i}d}}{\sqrt{d}}C_{i}(x_{j},\xi_j)+ h^{(k+1)/2}L_{k+1}(x,h^{*},\xi)
 \eqsp.
\end{equation}
We then see that if $C_{i}=0$, for $ i=1,\cdots, m$, then this implies that $\gamma_{0}=1/(m+1)$.
Indeed, this value of $\gamma_{0}$ yields $h^{m+1}d=1$ and
 %as for this choice of $\gamma_{0}$
the leading order term in \eqref{eq:expansion} becomes
\[
\frac{1}{\sqrt{d}}\sum_{j=1}^{d}C_{m+1}(x_{j},\xi_j) \eqsp.
\]
To understand the behaviour for large $d$, we typically assume conditions
to ensure that the above term has an appropriate (weak) limit. It turns out that $m+1$ is generally an odd integer for known proposals, and the above expression is frequently approximated by a central limit theorem. The second dominant term in \eqref{eq:expansion} turns out to be $C_{2(m+1)}$, although
%which using the appropriate limiting theorem converges in the limit of $d \rightarrow \infty$ to a positive constant different than 0 \moda{Alain: I would say ``a random variable'', because here it gives the feeling that $R_d$ is asymptotically greater than $0$, so the acceptance proba goes to $1$}.
 to turn this into a rigorous proof one also needs to be able to control the appropriate number of higher order terms, from $m+1$ to $2(m+1)$,  as well as the remainder term in the above Taylor expansion.

\subsection{Classes of proposals with $\gamma_{0}=1/5$}
We introduce new Gaussian proposals for which
%$m=4$ in the
%corresponding expansion \eqref{eq:expansion}, implying that
$\gamma_{0}=1/5$ in \eqref{eq:Md}.
%We start by presenting the simplest method, and then present two variations of it, motivated by the desire to obtain robust and stable ergodic properties (geometric ergodicity). The underlying calculations that show $m=4$ for all these methods are contained in the supplementary materials in the forms of Mathematica files.
%corresponding expansion \eqref{eq:expansion}, implying that $\gamma_{0}=1/5$.
We start by presenting the simplest method, and then give two variations of it, motivated by the desire to obtain robust and stable ergodic properties (geometric ergodicity). The underlying calculations that show $C_i=0,i=1,\ldots,m$ with $m=4$ and  $\gamma_{0}=1/5$ for these methods are contained in the supplementary materials in the form of a Mathematica file.
%\moda
Recall that  $f(x)=\Sigma \nabla\log{\pi(x)}$. In the sequel, we denote by $Df$ and $ D^{2}f$  the Jacobian ($d\times d$-matrix) and the Hessian ($d\times d^2$-matrix) of  $f$ respectively.
%Thus QQQQ
 %function from $\rset^d$ to the set of matrices of dimension $d \times d$ and $d %\times d^2$, given by for all $i,j,k=1,\dots,d$:
Thus  $(Df(x))_{i,j} = \frac{\partial f_i(x)}{\partial x_j} $ and
\[
D^2 f(x) = \left[ \mathbf{H}_1(x) \quad \cdots \quad \mathbf{H}_d(x) \right]  \quad , \quad \hbox{where }  \left \lbrace \mathbf{H}_i(x)  \right \rbrace _{j,k} = \frac{\partial f_i(x)} {\partial x_k \partial x_j}.
\]
Finally for all $x\in \rset^d$, $\{ \Sigma: D^{2}f(x) \} \in \rset^d$ is defined by  for $i=1,\dots,d$:
\[
\left \lbrace \Sigma: D^{2}f(x) \right \rbrace _i = \operatorname{trace} \left( \Sigma^T \mathbf{H}_i(x) \right) \eqsp.
\]
Notice that for $\Sigma=\Id$, the above quantity reduces to the Laplacian and we have $\left \lbrace \Sigma: D^{2}f(x) \right \rbrace _i = \Delta f_i$.
\begin{remark}
\label{rem:diag}
Since by assumption $\Sigma$ is positive definite, notice that the Jacobian matrix $Df(x)$ is diagonalizable for all $x\in\mathbb{R}^d$. Indeed, 
 it is similar to the symmetric matrix $\Sigma^{-1/2}Df(x)\Sigma^{1/2}  = \Sigma^{1/2} D^2 \log \pi(x) \Sigma^{1/2}$, and we use that a symmetric matrix is always diagonalizable.
This will permit us to define analytic functionals of~$Df(x)$.
%Since $\Sigma$ is by assumption positive definite, $\Sigma^{1/2}$ is well defined and for all $x \in \rset^d$, $\Sigma^{1/2} D^2 \log \pi(x) \Sigma^{1/2}$ is a symmetric matrix. We conclude that for all $x \in \rset^d$, $\Sigma^{1/2} D^2 \log \pi(x) \Sigma^{1/2}$ is diagonalizable. Then it follows that by definition  for all $x \in \rset^d$, $D f(x)$ is diagonalizable, since $\Sigma D^2 \log \pi(x)$ is similar to $\Sigma^{1/2} D^2 \log \pi(x) \Sigma^{1/2} = \Sigma^{-1/2} \Sigma D^2 \log \pi(x) \Sigma^{1/2}$.
\end{remark}
\subsubsection{Fast Metropolis-Adjusted Langevin Algorithm (fMALA)}
We first give a natural proposal for which $\gamma_0 = 1/5$ based on the discussion of Section~\ref{ssec:fd}. We restrict the class of proposal defined by \eqref{eq:gaussian_proposal} by setting for all $x \in \rset^d$ and $h >0$,
\begin{equation*}
\mu(x,h) = x + h \, \mu_1(x) + h^2 \mu_2(x) \eqsp, \qquad
S(x,h) = h^{1/2} S_1(x) + h^{3/2}S_2(x) \eqsp.
\end{equation*}
By a formal calculation (see the supplementary materials), explicit expressions for the functions $\mu_1,\mu_2,S_1,S_2$ have to be imposed for the four first term $C_i(x,\xi)$, $i \in \{1,2,3,4\}$, in \eqref{eq:expansion_2} to be zero. This result implies the following definition for $\mu$ and $S$: 
\begin{subequations}
  \label{eq:fMALA}
\begin{empheq}[box=\fbox]{align}
  \label{eq:new_proposal}
\mu^{\fMALA}(x,h)  	&= x+ \frac{h}{2}f(x)-\frac{h^{2}}{24} \left(
Df(x)f(x)+ \{\Sigma: D^{2}f(x)\} \right),   \\
  \label{eq:new_proposal_2}
S^{\fMALA}(x,h)	&=\left( h^{1/2}\operatorname{I}_d +(h^{3/2}/12) Df(x) \right) \Sigma^{1/2} \eqsp.
\end{empheq}
\end{subequations}
%where $\operatorname{I}_d$ is the identity matrix,
%  $
% f(x)=\Sigma \nabla\log{\pi(x)} \eqsp,$ and
% $Df, D^{2}f$ is the Jacobian and the Hessian of  $f$ respectively \moda{ function from $\rset^d$ to the set of matrices of dimension $d \times d$ and $d \times d^2$, given by for all $i,j,k=1,\dots,d$: $(Df(x))_{i,j} = \frac{\partial f_i(x)}{\partial x_j} $ and
% \[
% Df(x) = \left[ \mathbf{H}_1 \quad \cdots \quad \mathbf{H}_d \right]  \quad , \quad  \left \lbrace \mathbf{H}_i  \right \rbrace _{j,k} = \frac{\partial f_i(x)} {\partial x_k \partial x_j}
% \]
% Finally $\Sigma: D^{2}f(x)$ is defined by  for $i=1,\dots,d$:
% \[
% \left \lbrace \Sigma: D^{2}f(x) \right \rbrace _i = \operatorname{trace} \left( \Sigma^T \mathbf{H}_i \right) \eqsp.
% \]
% }
%  and $:$ denotes the inner product on
% $\IR^{d \times d}$ which induces the Frobenius norm $A:B=\text{trace}(A^{T}B)$.
%\modk
We will refer to \eqref{eq:gaussian_proposal} when $\mu,S$
  are given by \eqref{eq:fMALA} as the fast
  Unadjusted Langevin Algorithm (fULA) when viewed as a numerical method for
  \eqref{eq:langevinA} and as the fast Metropolis-Adjusted Langevin Algorithm (fMALA) when used as a proposal in the Metropolis-Hastings framework.

%\modk
{
\begin{remark}
It is interesting to note that compared with Unadjusted Langevin Algorithm (ULA), fULA has the same order of weak 
convergence one, if applied as a one-step integrator for \eqref{eq:langevinA}. One could obtain a second order weak method by changing the 
constants in front of the higher order coefficients, but in fact the corresponding method would not have better scaling properties than MALA when 
used in  the Metropolis-Hastings framework. This observation answers negatively in part one of the questions in the discussion of 
\cite{girolami:calderhead:2011} about the potential use   of higher order integrators for the Langevin equation within the Metropolis-Hastings 
framework.
\end{remark}
}

\begin{remark} \label{rem:comp}
The proposal given by equation \eqref{eq:fMALA} contains higher order derivatives of the vector field $f(x)$, resulting in higher computational cost 
than the standard MALA proposal. This additional cost might offset the benefits of the improved scaling, since the corresponding Jacobian and 
Hessian can be full matrices in general.  However, there exist cases of interest\footnote{We study one of those in Section \ref{sec:numer}.} where 
due to the structure of the Jacobian and Hessian the computational cost of the fMALA proposal is of the same order with respect to the dimension 
$d$ as for the MALA proposal.  Furthermore, we note that one possible way to avoid derivatives is
by using finite differences or Runge-Kutta type approximations of the 
proposal \eqref{eq:fMALA}. This, however, is out of the scope of the present paper.
\end{remark}

%\modk{GILLES: Do you think it make sense to cite the other paper in the remark above?}

\subsubsection{Modified Ozaki-Metropolis algorithm (mOMA)}
%\modk
{One of the problems related to the MALA proposal is that it fails to be geometrically ergodic for a wide range of targets $\pi$ \cite{RT96}. This issue was addressed in \cite{roberts:stramer:2002} where a modification of MALA based on the Ozaki discretization \cite{ozaki:1992} of \eqref{eq:langevinA} was proposed and studied.}
In the same spirit as in \cite{roberts:stramer:2002} we propose here a modification of fMALA, defined by
\begin{subequations}
\label{eq:mO}
  \begin{empheq}[box=\fbox]{align}
\mu^{\mO}(x,h) &= \varx + \opT_1(D f (\varx) , h , 1)  f(\varx) -(h^{2}/6)D f (\varx)
 f(\varx)  \label{eq:mOa}\\
 \nonumber
 &\phantom{\varx + \opT_1(D f (\varx) , h , 1)  f(\varx) -(h^{2}}-(h^{2}/24) \{\Sigma: D^{2}f(x)\}  \\
S^{\mO}(x,h) & = \bracket{\opT_1(D f(\varx) , 2 h , 1 )  - (h^{2}/3) D f (\varx)}^{1/2} \Sigma^{1/2}   \eqsp.
\end{empheq}
\end{subequations}
where
\begin{equation}
\label{eq:operatorOne}
\opT_1(\Mrm,h,a) =
(a \Mrm)^{-1}(\rme^{(ah/2) \Mrm} - \Id)
%\begin{cases}
%(a \Mrm)^{-1}(\rme^{(ah/2) \Mrm} - \Id) & \text{ if } a \Mrm \text{ is invertible} \\
%\sum_{k=0} ^{\plusinfty} \frac{(a \Mrm) ^k (h/2)^{k+1} }{ (k+1)! }  & \text{otherwise}  \eqsp,
%\end{cases}
\end{equation}
for all\footnote{%
Notice that the matrix functionals in \eqref{eq:operatorOne},\eqref{eq:operatorTwo},\eqref{eq:operatorThree} remain valid if matrix $aM$ is not invertible,
using the appropriate power series for the matrix exponentials.
}
$\Mrm \in \IR^{d \times d},\, \ h>0$, $a \in \IR$. %and $\operatorname{I}_d$ is the identity matrix.

The Markov chain defined by \eqref{eq:mO} will be referred to as the
modified unadjusted Ozaki algorithm (mUOA), whereas when it is used in
a Hastings-Metropolis algorithm, it will be referred to as the modified
Ozaki Metropolis algorithm (mOMA). 
% By Lemma~\ref{lem:fun_positive}-\eqref{lem:fun_positive_1}
Note that $t \mapsto (\rme^{h t } - 1) / t - (1/3)h^2 t$ is positive
on $\IR$ for all $h >0$. It then follows from
  Remark~\ref{rem:diag} that for all $x \in \rset^d$, the matrix
  $\opT_1(D f(\varx) , 2 h , 1 ) - (h^{2}/3) D f (\varx)$ is
  diagonalizable with non-negative eigenvalues, which permits to define its matrix square-root, and $S^{\mO}(x,h)$ is
  well defined for all $\varx \in \IR^d$ and $h>0$.
%\moda
\begin{remark} \label{rem:mOMA_rem}
%When we analyse the properties of MALA, we can observe that
In regions where $\norm{\Sigma \nabla \log \pi(x)}$ is much greater than  $\norm{x}$, we need in practice to take $h$ very small (of order $\norm{x} / \norm{\Sigma \nabla \log \pi (x)}$)
for MALA % in order
to exit these regions. However such a choice of $h$
%In particular, such choice of $h$
depends on $x$ and cannot be used directly. Such a value of $h$ can therefore be hard to find theoretically as well as computationally. This issue can be tackled by multiplying $f=\Sigma\nabla \log \pi(x)$ by  $\opT_1(Df(x),h,a)$ in \eqref{eq:mOa}. Indeed under some mild conditions, in that case, we can obtain an algorithm with good mixing properties % we can observe a good behaviour of the algorithm
for all $h>0$ ; see \cite[Theorem 4.1]{roberts:stramer:2002}. mOMA faces similar problems due to the term $Df(x) f(x)$.
\end{remark}

%\modk{
%\begin{remark} \label{rem:mOMA_rem}
%We note here that we have managed to address the ergodicity issues associated with the $\nabla \log(\pi)$ term, inspired by the approach in \cite{roberts:stramer:2002}. However, there is added complications associated with the presence of the factor $Df(x)f(x)$ in $fMALA$    that we need to carefully consider. In particular, mOMA, similar to fMALA can be particularly inefficient, because very few moves are accepted. Indeed, the problem of this
%proposal is that for many densities $\pi$, $D f (\varx) f(\varx)$
%tends to be very large when $\norm{\varx}$ is. Therefore, we cannot
%expect geometric ergodicity and stability.\end{remark}}

\subsubsection{Generalised Boosted Ozaki-Metropolis Algorithm (gbOMA)}
%\modk
{Having discussed the possible limitations of mOMA  in Remark \ref{rem:mOMA_rem} we generalise here the approach in \cite{roberts:stramer:2002} to deal with the complexities arising to the presence of the  $Df(x)f(x)$ term}. In particular we now define
\begin{subequations}
  \label{eq:gbO}
  \begin{empheq}[box=\fbox]{align}
\mu^{\gbO}(x,h)  &= \varx + \opT_1(D f (\varx), h,a_1)  f(\varx)
\nonumber\\
&\qquad - (1/3)\opT_3(D f(\varx),h,a_3) \{ \Sigma: D^{2}f(x)\}  \nonumber \\ 
&\qquad +
\left((a_1/2) + (1/6)\right) \opT_2(D f(\varx) , h , a_2)   f(\varx)
\eqsp, \\
\nonumber
 S^{\gbO}(x,h) & = \left( \opT_1 ( D f(\varx), 2 h , a_4) \right.\\
&\qquad \left. + \left((a_4/2) - (1/6)\right) \opT_2(D
 f(\varx), 2 h , a_5)\right) ^{1/2} \Sigma^{1/2}  \eqsp.
\label{eq:gbOb}
\end{empheq}
\end{subequations}

where $a_i,\  i=1, \cdots, 5$ are positive parameters, $\opT_1$ is given by
\eqref{eq:operatorOne} and
\begin{align}
\label{eq:operatorTwo}
\opT_2(\Mrm,h,a) &=
(a \Mrm)^{-1}(\rme^{- (a h^2 /4) \Mrm^2} - \Id)
%\begin{cases}
%(a \Mrm)^{-1}(\rme^{- (a h^2 /4) \Mrm^2} - \Id) & \text{ if } a \Mrm \text{ is invertible} \\
%\sum_{k=0} ^{\plusinfty} \frac{(-1)^{k+1} (a \Mrm^2)^k \Mrm (h/4)^{2(k+1)} }{(k+1)!} & \text{otherwise}  \eqsp,
%\end{cases}
\\
\label{eq:operatorThree}
\opT_3(\Mrm,h,a) &=
(a \Mrm)^{-2}(\rme^{(ah/2) \Mrm} - \Id - (ah/2) \Mrm)
%\begin{cases}
%(a \Mrm)^{-2}(\rme^{(ah/2) \Mrm} - \Id - (ah/2) \Mrm) & \text{ if } a \Mrm \text{ is invertible} \\
%\sum_{k=0} ^{\plusinfty} \frac{(a \Mrm) ^k (h/2)^{k+2} }{ (k+2)!}   & \text{otherwise}  \eqsp,
%\end{cases}
\end{align}
with $\Mrm \in \IR^{d \times d}$, $h>0$, $a \in \IR$ and $\operatorname{I}_d$ is the identity matrix. The Markov chain defined by \eqref{eq:gbO} will be referred to as
the generalised boosted unadjusted Ozaki algorithm (gbUOA), whereas when it is used in a
Hastings-Metropolis algorithm, it will be referred to as the generalised
boosted Ozaki Metropolis algorithm (gbOMA). Note that $S^{\gbO}$ in \eqref{eq:gbOb} is not always well defined in
general. However, using Remark~\ref{rem:diag}, the following condition is sufficient to define $S^{\gbO}$ with the square-root of a diagonalizable matrix with non-negative eigenvalues.
%But it follows from Remark~\ref{rem:diag} that the following
%condition on the constants $a_4,a_5$ guarantees it.
\begin{assumption}
\label{hyp:definite_positive_covariance_matrix}
The function $t \mapsto (\rme^{a_4 t
} - 1)/(a_4 t) + (a_4/2 - (1/6)) (\rme^{-a_5  t^2}-1)/(a_5 t)$ is positive on $\IR$.
\end{assumption}
For $a_4 = a_5 =1$, 
%by Lemma~\ref{lem:fun_positive}-\eqref{lem:fun_positive_2}
 this assumption is satisfied, and
choosing $a_i =1$ for all $i = 1 , \dots, 5$, \eqref{eq:gbO} leads to a well defined
proposal, which will be referred to as  the boosted Unadjusted Ozaki Algorithm (bUOA), whereas
when it is used in a Hastings-Metropolis algorithm, it will be referred to as the boosted
Ozaki Metropolis Algorithm (bOMA). We will see in Section~\ref{sec:convergence_erg} that
bOMA has  nicer ergodic properties than fMALA.

\section{Main scaling results}
\label{sec:theory}

In this section, we  present  the optimal scaling results for  fMALA and gbOMA introduced in Section~\ref{sec:der}. We recall from the discussion in Section \ref{sec:der}
 that the parameter $h$  depends on the dimension and is given as $h_d = \ell^2 d^{-1/5}$, with $\ell >0$. Finally, we prove our results for the case of target distributions
%(corresponding to \iid~component)
of the product form given by (\ref{eq:prod}), we take $\Sigma= \Id$, and make the following assumptions on $g$.
\begin{assumption} We assume
  \label{hyp:log_density}
  \begin{enumerate}
  \item \label{item:hyp_log_density_lipschitz_Cten}   $g \in C^{10}(\IR)$ and $g''$ is bounded on $\IR$.
    \item  \label{item:hyp_log_density_polynomial}
		The derivatives of $g$ up to order $10$ have at most a polynomial growth, i.e. there exists constants $C,\kappa$ such that
		%There exists a polynomial on $\IR$, $\Prm_0$, such that
 %for all $t \in \IR$ and $i=1 \cdots 10$
  \[ |g^{(i)}(t)| \leq C(1+|t|^\kappa),\qquad t\in \IR, i=1,\ldots,10.
  \]
\item \label{item:hyp_log_density_moments} for all $k \in \IN$,
    \[ \int_\IR t^k \rme^{g(t)} \rmd t < \plusinfty \eqsp.
    \]
\end{enumerate}
\end{assumption}

\subsection{Optimal scaling of fMALA}
\label{ssec:optimal_scaling_fMALA}

The Markov chain produced by fMALA, with target density $\pi_d$ and started at stationarity, will be denoted by $\{ \varX_k^{d,\fMALA}\ ,$ $  \ k \in \IN \}$.
Let $q^{\fMALA}_d$ be the transition density associated with the proposal of fMALA relatively to $\pi_d$. In a similar manner, we denote by $\alpha_d ^{\fMALA}$ the acceptance probability.
Now we introduce the jump process based on $\{\varX_k^{d,\fMALA}, \ k \in \IN\}$, which
allows us to compare this Markov chain to a continuous-time process. Let $\{J_t , \  t
  \in \IR_+\}$ be a Poisson process with rate $d^{1/5}$, and let $\procLine^{d,\fMALA} =
\{\procLine^{d,\fMALA}_t, \  t \in \IR_+ \}$ be the $d$-dimensional jump process defined by
$\procLine^{d,\fMALA}_t = \varX^{d,\fMALA}_{J_t}$. We denote by
 \[
 \acceptMean_d^{\fMALA}(\ell) = \int_{\rset^d} \int_{\rset^d} \pi_d(\varx) q_d ^{\fMALA}(\varx, \vary) \alpha_d ^{\fMALA}(\varx,
 \vary) \rmd \varx \rmd \vary
 \]
the mean under $\pi_d$ of the acceptance rate.

\begin{theorem}
\label{theo:accept_ratio_fMALA}
Assume Assumption~\ref{hyp:log_density}.
Then
\[
\lim_{d \to \plusinfty } \acceptMean_d^{\fMALA}(\ell) = \acceptMean^{\fMALA} (\ell)\eqsp,
\]
  where $\acceptMean^{\fMALA}(\ell) = 2 \Phi(-K^{\fMALA} \ell ^5 /2)$ with $\Phi(t) = (1/(2 \uppi))
  \int_{- \infty} ^t \rme^{-s^2/2} \rmd s$ and the expression of $K^{\fMALA}$ is
  given in Appendix~\ref{app:expression_K}.
  \end{theorem}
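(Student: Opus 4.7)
Since $r\mapsto \min(1,\rme^r)$ is bounded and continuous, it is enough to prove weak convergence of the log acceptance ratio
\begin{equation*}
R_d(\varx,\vary) = \log\frac{\pi_d(\vary)\, q_d^{\fMALA}(\vary,\varx)}{\pi_d(\varx)\, q_d^{\fMALA}(\varx,\vary)}
\end{equation*}
under the pair measure $\pi_d(\rmd\varx)\, q_d^{\fMALA}(\varx,\rmd\vary)$ to a Gaussian law $\mathcal{N}(-\sigma^2/2,\sigma^2)$ with $\sigma = K^{\fMALA}\ell^5$, since a direct Gaussian integration (Cameron--Martin shift) then yields
\begin{equation*}
\E[\min(1,\rme^R)] = \Prm(R\geq 0) + \E[\rme^R \1_{R<0}] = 2\Phi(-\sigma/2) = 2\Phi(-K^{\fMALA}\ell^5/2).
\end{equation*}

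First I would expand $R_d$ explicitly. Writing $\vary = \mu^{\fMALA}(\varx,h_d) + S^{\fMALA}(\varx,h_d)\xi$ with $\xi \sim \mathcal{N}(0,\Id)$, and noting that under~\eqref{eq:prod} with $\Sigma=\Id$ the matrix $Df(\varx)$ is diagonal and each component of $\{\Sigma : D^2 f(\varx)\}$ depends on a single coordinate, both $\mu^{\fMALA}$ and $S^{\fMALA}$ act coordinatewise. Consequently $R_d$ factorises as $\sum_{j=1}^d \rho(\varx_j,\xi_j,h_d)$ and Taylor expanding $\rho$ in $h_d^{1/2}$ gives an expression of the form~\eqref{eq:expansion_2}. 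The defining property of fMALA, verified by the symbolic computation in the supplementary Mathematica file, is that the first four coefficients $C_1,\ldots,C_4$ vanish identically, so that with $h_d = \ell^2 d^{-1/5}$
\begin{equation*}
R_d = \frac{\ell^5}{\sqrt{d}}\sum_{j=1}^{d} C_5(\varx_j,\xi_j) + \mathcal{R}_d,
\end{equation*}
where $\mathcal{R}_d$ collects the Taylor contributions of order $h_d^{i/2}$ with $i\geq 6$ together with the order-ten Taylor remainder.

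Next I would apply the i.i.d.\ central limit theorem to the leading sum. Assumption~\ref{hyp:log_density} supplies $C^{10}$ smoothness, polynomial growth of the derivatives up to order ten, and finite polynomial moments of $\rme^{g}$, hence moments of all orders for $C_5(\varx_1,\xi_1)$. A direct second-moment computation, yielding the explicit formula for $K^{\fMALA}$ collected in Appendix~\ref{app:expression_K}, gives $\Var(C_5(\varx_1,\xi_1)) = (K^{\fMALA})^2$, so that the variance of the leading sum converges to $(K^{\fMALA})^2 \ell^{10}$. Its mean is pinned down by the identity $\E_{\pi_d\otimes q_d^{\fMALA}}[\rme^{R_d}] = 1$, which is a change of variables reflecting the detailed-balance form of~\eqref{eq:acceptance_ratio} and, combined with the asymptotic Gaussianity of the leading sum, forces the limiting mean to equal $-(K^{\fMALA})^2 \ell^{10}/2$. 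The standard CLT then gives
\begin{equation*}
\frac{\ell^5}{\sqrt d}\sum_{j=1}^d C_5(\varx_j,\xi_j) \Rightarrow \mathcal{N}\bigl(-(K^{\fMALA})^2\ell^{10}/2,\,(K^{\fMALA})^2\ell^{10}\bigr).
\end{equation*}

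Finally I would show $\mathcal{R}_d \to 0$ in probability. Contributions of exact Taylor order $h_d^{i/2}$ with $i\geq 6$ are sums of $d$ i.i.d.\ terms with prefactor $d^{-i/10}$, hence of stochastic size at most $d^{1/2-i/10}=o(1)$; the order-ten Taylor remainder is controlled pointwise via the boundedness of $g''$ in Assumption~\ref{hyp:log_density}~\ref{item:hyp_log_density_lipschitz_Cten} to keep $\mu^{\fMALA}$ and $S^{\fMALA}$ well-behaved along the segment from $\varx$ to $\vary$, together with the polynomial moment bounds of~\ref{item:hyp_log_density_polynomial}--\ref{item:hyp_log_density_moments} to take expectations through Markov's inequality. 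Combining these three steps gives $R_d \Rightarrow \mathcal{N}(-\sigma^2/2,\sigma^2)$, and the preliminary display concludes the proof. I expect the hard part to be exactly this control of $\mathcal{R}_d$: each $C_i$ for $i\geq 5$ is a polynomial in derivatives of $g$ up to order ten coming out of the fMALA expansion, and the bookkeeping required to propagate $L^p$ bounds through every piece of the expansion is where the full strength of Assumption~\ref{hyp:log_density} is used, even if each individual estimate is standard.
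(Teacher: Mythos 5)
Your overall strategy (an unconditional CLT for the log acceptance ratio $R_d$ under the joint law $\pi_d(\rmd\varx)q_d^{\fMALA}(\varx,\rmd\vary)$, followed by integration of the bounded continuous map $r\mapsto 1\wedge\rme^r$ and the Gaussian identity $\E[1\wedge\rme^Z]=2\Phi(-\sigma/2)$ for $Z\sim\mathcal{N}(-\sigma^2/2,\sigma^2)$) is a legitimate and in fact more economical route for this particular theorem than the paper's, which proves a CLT for $\sum_{i\ge 2}C_5(\varx_i,\xi_i)$ \emph{conditionally} on $\varx$, uniformly over high-probability sets $F_d$, because those uniform statements are reused for the diffusion limit (Theorem~\ref{theo:scaling:fMALA}). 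However, your execution contains a genuine error in the bookkeeping of where the mean $-\sigma^2/2$ comes from. The leading sum $d^{-1/2}\sum_{j}C_5(\varx_j,\xi_j)$ is \emph{exactly centered} for every $d$ ($C_5$ contains only odd powers of $\xi_j$, see Appendix~\ref{app:expression_C5}), so the i.i.d.\ CLT can only give $\mathcal{N}(0,\ell^{10}(K^{\fMALA})^2)$; no identity about $R_d$ can ``force'' a centered normalized sum to converge to a non-centered Gaussian. Correspondingly, your claim that $\mathcal{R}_d\to 0$ in probability is false: the order-$h_d^5$ term $d^{-1}\sum_j C_{10}(\varx_j,\xi_j)$ converges by the law of large numbers to $\E[C_{10}]=-\ell^{10}(K^{\fMALA})^2/2$ (this is precisely the second identity in \eqref{eq:lem_taylor_expansion_expe2}), and this constant is the entire source of the mean shift. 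Your size estimate ``$d$ i.i.d.\ terms with prefactor $d^{-i/10}$ have stochastic size $d^{1/2-i/10}$'' only bounds the \emph{centered fluctuations}; the means contribute $d^{1-i/10}\E[C_i]$, which for $i=6$ would diverge like $d^{2/5}$ unless $\E[C_i]=0$. So you also need, and never establish, that $\E[C_i(\xrm,\xi_1)]=0$ for $i=6,\dots,9$ (parity in $\xi_1$ for odd $i$; an integration-by-parts argument under $\pi_1$, using Assumption~\ref{hyp:log_density}-\ref{item:hyp_log_density_moments}, for $i=6,8$ --- this is how the paper proves \eqref{eq:lem_taylor_expansion_expe1}).

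The appeal to $\E_{\pi_d\otimes q_d}[\rme^{R_d}]=1$ cannot repair this as stated: to transfer that identity to the limit and conclude that the limiting mean is $-\sigma^2/2$ you would need uniform integrability of $\rme^{R_d}$, which does not follow from weak convergence and is not addressed. The correct repair is the paper's: carry the expansion to order $h_d^5$ inclusive, prove $\E[C_i]=0$ for $i=5,\dots,9$, identify $\E[C_{10}]=-\ell^{10}(K^{\fMALA})^2/2=-\tfrac12\Var(C_5)\ell^{10}\cdot(\text{normalization})$, and control the order-eleven Taylor remainder using the boundedness of $g''$ and the polynomial growth bounds, as in Lemma~\ref{lem:taylor_expansion}. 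With those ingredients your unconditional CLT argument goes through; without them the central display of your proof is not true.
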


\begin{theorem}
\label{theo:scaling:fMALA}
Assume Assumption~\ref{hyp:log_density}.
Let $\{Y_t^{d,\fMALA}  = \procLine^{d,\fMALA}_{t,1}  , \  t \in \IR_+ \} $ be the process corresponding to
the first  component of $\procLine^{d,\fMALA}$. Then, $\{Y^{d,\fMALA}, \ d \in \IN^*\}$ converges weakly (in the Skorokhod
topology), as $d \to \infty$, to the solution $\{Y_t^{\fMALA} , \ t \in \IR_+\}$ of the Langevin equation defined
by:
\begin{equation}
\label{eq:langevin_fMALA}
\rmd Y_t ^{\fMALA}  = (\speedMeasure^{\fMALA}(\ell))^{(1/2)} \rmd B_t + (1/2) \speedMeasure^{\fMALA}(\ell) \nabla \log \pi_1(Y_t^{\fMALA}) \rmd t
\eqsp,
\end{equation}
where $\speedMeasure^{\fMALA} (\ell) = 2 \ell^2 \Phi(-K^{\fMALA} \ell^5/2)$ is the speed of the
limiting diffusion. Furthermore, $\speedMeasure ^{\fMALA} (\ell)$ is maximised at the unique value of $\ell$
for which $\acceptMean^{\fMALA} (\ell) = 0.704343$.
\end{theorem}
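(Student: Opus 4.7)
The plan is to establish the weak convergence via an infinitesimal generator argument, in the spirit of the Ethier--Kurtz framework used in \cite{RGG97,RR98} for RWM and MALA. Let $\mathcal{L}_d^{\fMALA}$ denote the generator of the jump process $\procLine^{d,\fMALA}$, which acts on bounded measurable $\testFunction : \IR \to \IR$ (viewed as a function of the first coordinate $x_1$ only) by
\[
\mathcal{L}_d^{\fMALA} \testFunction(\varx) = d^{1/5}\, \IE\!\left[ \alpha_d^{\fMALA}(\varx,\varY) \left( \testFunction(\varY_1) - \testFunction(\varx_1) \right) \right],
\]
where $\varY$ is the fMALA proposal from $\varx$. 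I want to show that for $\testFunction \in C_c^\infty(\IR)$,
$\mathcal{L}_d^{\fMALA} \testFunction(\varx) \to \mathcal{L}^{\fMALA} \testFunction(\varx_1)$ in $L^1(\pi_d)$, where $\mathcal{L}^{\fMALA}$ is the generator of the SDE \eqref{eq:langevin_fMALA}, and then deduce weak convergence in the Skorokhod topology by combining this generator convergence with a tightness argument.

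The central technical step is a double Taylor expansion. First, expand $\testFunction(\varY_1) - \testFunction(\varx_1)$ to second order in $\varY_1 - \varx_1$, writing $\varY_1 - \varx_1 = h \mu_1^{\fMALA}(\varx)_1 + h^2 \mu_2^{\fMALA}(\varx)_1 + h^{1/2}\xi_1 + h^{3/2}(Df(\varx)\xi)_1$. Multiplying by $d^{1/5}$ and using $h_d = \ell^2 d^{-1/5}$, the martingale/drift terms collapse so that
$d^{1/5}\IE[\alpha_d^{\fMALA}(\varY_1 - \varx_1)] \to (1/2) \speedMeasure^{\fMALA}(\ell)\, g'(\varx_1)$
and
$d^{1/5}\IE[\alpha_d^{\fMALA}(\varY_1 - \varx_1)^2] \to \speedMeasure^{\fMALA}(\ell)$,
the identification of the limit $\speedMeasure^{\fMALA}(\ell) = \ell^2\, \acceptMean^{\fMALA}(\ell)$ being supplied directly by Theorem~\ref{theo:accept_ratio_fMALA}. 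To make this rigorous one must justify passing to the limit inside the expectations: here the expansion of the log-acceptance ratio $R_d$ in powers of $h^{1/2}$, together with the product structure \eqref{eq:prod} and the vanishing of the first four coefficients $C_1,\dots,C_4$ (Section~\ref{ssec:fd}, verified in the supplementary Mathematica file), produces the Gaussian central-limit behaviour of $R_d$ with explicit variance proportional to $(K^{\fMALA})^2 \ell^{10}$. Moment bounds on $R_d$ and $\varY_1 - \varx_1$ under Assumption~\ref{hyp:log_density}, notably the polynomial growth of $g^{(i)}$ up to order~$10$ and the integrability condition, permit uniform integrability and control of the remainder $L_{k+1}$ in \eqref{eq:expansion_2}.

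The main obstacle, as in all such optimal-scaling theorems, is the joint asymptotic analysis of $\alpha_d^{\fMALA}(\varx,\varY)$ and the first-component increment $\varY_1 - \varx_1$. These are not independent -- the acceptance probability depends on $\xi_1$ through the single coordinate $j=1$ in \eqref{eq:expansion_2} -- so one has to argue that the coordinate $j=1$ contribution to $R_d$ is asymptotically negligible compared to the sum over $j=2,\dots,d$, which then decouples from $\xi_1$. This $d-1$ versus $1$ decoupling, carried out conditionally on $(\varx_1,\xi_1)$, is what allows $\acceptMean^{\fMALA}(\ell)$ to appear as a multiplicative factor in the drift and diffusion of the limit, and its control requires careful $L^p$ estimates on the truncated remainder terms of~$R_d$. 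Tightness of $\{Y^{d,\fMALA}\}_d$ in the Skorokhod topology follows from standard Aldous-type criteria applied to the jump process with rate $d^{1/5}$, using the uniform moment bounds established above.

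Finally, for the optimisation claim, I would substitute $u = K^{\fMALA} \ell^5 /2$ so that $\speedMeasure^{\fMALA}$ becomes $2 (2u/K^{\fMALA})^{2/5} \Phi(-u)$; differentiating in $u$ yields the single equation $2\Phi(-u) = 5u\, \phi(u)$ with $\phi$ the standard Gaussian density, which has a unique positive root $u^*$. At this root, $\acceptMean^{\fMALA}(\ell^*) = 2\Phi(-u^*)$, and a direct numerical evaluation gives the value $0.704343$ quoted in the statement. Uniqueness of the maximiser follows from log-concavity of $\ell \mapsto \ell^2 \Phi(-K^{\fMALA} \ell^5 /2)$ on $(0,\infty)$, itself a consequence of the log-concavity of the Gaussian tail.
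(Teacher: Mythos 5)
Your proposal follows essentially the same route as the paper's proof: generator convergence in the Ethier--Kurtz framework, a Taylor expansion of the log-acceptance ratio whose first four coefficients vanish, a (uniform) central limit theorem for the sum over coordinates $j\geq 2$, and---crucially---the decoupling of the first coordinate's contribution to the acceptance probability, which is exactly the content of Theorem~\ref{theo:approxAd}; your optimisation of $\speedMeasure^{\fMALA}$ via the substitution $u=K^{\fMALA}\ell^5/2$ also matches the intended argument. The only packaging difference is that you propose $L^1(\pi_d)$ generator convergence plus a separate Aldous-type tightness step, whereas the paper establishes uniform generator convergence on high-probability sets $F_d$ and invokes \cite[Corollary 8.7, Chapter 4]{ethier:kurtz:1986}, which dispenses with a separate tightness proof.
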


\begin{proof}
The proof of these two theorems are in Appendix \ref{app:scaling_proof}.
\end{proof}

\begin{remark}
The above analysis shows that for
  fMALA, the optimal exponent defined in 
  \eqref{eq:def_gamma_0} is given by $\gamma_0=1/5$ as discussed in Section \ref{ssec:fd}.
Indeed, if $h_d$ has the form $\ell^2 d^{-1/5 + \epsilon}$, then an
   adaptation of the proof of Theorem~\ref{theo:accept_ratio_fMALA} implies
   that for all $\ell >0$, if $\epsilon \in \ooint{0,1/5}$, 
$\lim_{d
    \to + \infty} \acceptMean^{\fMALA} (\ell) = 0$. 
 In contrast, if $\epsilon <0$ then $\lim_{d \to + \infty}
  \acceptMean^{\fMALA} (\ell) = 1$. 
	%These results show that for
  %fMALA, the optimal exponent $\gamma_0$ defined by
  %\eqref{eq:def_gamma_0}, is equal to $1/5$.
\end{remark}

\subsection{Scaling results for gbOMA}
As in the case of fMALA, we assume $\pi_d$ is of the form
\eqref{eq:prod} and we take $\Sigma = \Id$, $h_d = \ell^2
d^{-1/5}$.  The Metropolis-adjusted Markov chain based on gbOMA, with
target density $\pi_d$ and started at stationarity, is denoted by
$\{\varX_k^{d,\gbO}, \ k \in \IN \}$.  We will denote by $q^{\gbO}_d$
the transition density associated with the proposals defined by gbOMA
with respect to $\pi_d$. In a similar manner, the acceptance
probability relatively to $\pi_d$ and gbOMA will be denoted by
$\alpha_d ^{\gbO}$.  Let $\{J_t ,\ t \in \IR_+ \}$ be a Poisson
process with rate {$d^{1/5}$}, and let $\procLine^{d,\gbO} =
\{\procLine^{d,\gbO}_t, \ t \in \IR_+ \}$ be the $d$-dimensional jump
process defined by $\procLine^{d,\gbO}_t =
\varX^{d,\gbO}_{J_t}$. Denote also by
 \[
 \acceptMean_d^{\gbO}(\ell) = \int_{\rset^d} \int_{\rset^d} \pi_d(\varx) q_d ^{\gbO}(\varx, \vary) \alpha_d ^{\gbO}(\varx,
 \vary) \rmd \varx \rmd \vary
 \]
the mean under $\pi_d$ of the acceptance rate of the algorithm.

\begin{theorem} \label{theo:accept_ratio_gbO}
Assume Assumptions~\ref{hyp:definite_positive_covariance_matrix} and
\ref{hyp:log_density}. Then
\[
\lim_{d \to \plusinfty } \acceptMean_d^{\gbO}(\ell) = \acceptMean^{\gbO}(\ell) \eqsp,
\]
  where $\acceptMean^{\gbO} (\ell)= 2 \Phi(-K^{\gbO} \ell ^5 /2)$ with $\Phi(t) = (1/(2 \uppi))
  \int_{- \infty} ^t \rme^{-s^2/2} \rmd s$ and  $K^{\gbO}$ are
  given in Appendix~\ref{app:expression_K}.
  \end{theorem}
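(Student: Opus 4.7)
The plan is to follow the same template as the proof of Theorem \ref{theo:accept_ratio_fMALA}, exploiting the product structure of $\pi_d$ and the choice $\Sigma = \Id$ to reduce everything to a one-dimensional calculation. With these assumptions, $f(x) = (g'(x_1),\ldots,g'(x_d))^T$, $Df(x) = \operatorname{diag}(g''(x_1),\ldots,g''(x_d))$ and $\{\Sigma:D^2 f(x)\}_i = g'''(x_i)$. Since $Df(x)$ is diagonal, each matrix functional $\mathscr{T}_k(Df(x),h,a)$, $k=1,2,3$, reduces to the diagonal matrix whose $j$-th entry is the scalar $\mathscr{T}_k(g''(x_j),h,a)$. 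Consequently the proposal $q^{\gbO}_d(x,\cdot)$ factorises as a product of $d$ independent one-dimensional Gaussian densities, and the log acceptance ratio decomposes as
\begin{equation*}
R_d^{\gbO}(x,y) \;=\; \log\!\frac{\pi_d(y)q^{\gbO}_d(y,x)}{\pi_d(x)q^{\gbO}_d(x,y)}
\;=\; \sum_{j=1}^{d} r\bigl(x_j,\xi_j,h_d\bigr),
\end{equation*}
where $r$ is an explicit scalar function and $y = \mu^{\gbO}(x,h_d) + S^{\gbO}(x,h_d)\xi$.

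Next I would perform a Taylor expansion of $r(x_j,\xi_j,h)$ in powers of $\sqrt{h}$ as in \eqref{eq:expansion_2}, writing $r(x_j,\xi_j,h_d) = \sum_{i=1}^{9} h_d^{i/2} C_i(x_j,\xi_j) + h_d^{5}L_{10}(x_j,\xi_j,h_d^*)$. The Mathematica verification referenced before Section 2.3 gives that, by construction of \eqref{eq:gbO}, $C_i \equiv 0$ for $i=1,2,3,4$ and also $\mathbb{E}[C_5(x_1,\xi_1)] = -\tfrac{1}{2}\operatorname{Var}(C_5(x_1,\xi_1))$ under $\pi_1 \otimes \mathcal{N}(0,1)$ (this last identity is the standard Roberts--Gelman--Gilks consequence of reversibility of the Metropolis-Hastings acceptance ratio). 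Plugging $h_d = \ell^2 d^{-1/5}$ gives $h_d^{5/2}\sqrt{d} = \ell^5$, so
\begin{equation*}
R_d^{\gbO}(X^d,Y^d) \;=\; \frac{\ell^5}{\sqrt{d}}\sum_{j=1}^{d} \widetilde C_5(X^d_j,\xi_j) \;+\; \mathcal{E}_d,
\end{equation*}
where $\widetilde C_5 = C_5 / \ell^5$ is independent of $\ell$ and $\mathcal{E}_d$ collects higher-order contributions.

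The third step is to pass to the limit. Under $\pi_1$ the coordinates $X^d_j$ are iid, independent of the iid standard Gaussians $\xi_j$, so the classical CLT yields
\begin{equation*}
\frac{\ell^5}{\sqrt{d}}\sum_{j=1}^{d}\widetilde C_5(X^d_j,\xi_j) \xrightarrow{\;\mathrm{law}\;} Z \sim \mathcal{N}\!\bigl(-\sigma^2/2,\,\sigma^2\bigr),
\qquad \sigma^2 = (K^{\gbO})^2 \ell^{10},
\end{equation*}
where $K^{\gbO}$ is defined through the variance of $\widetilde C_5$ computed in Appendix \ref{app:expression_K}. Combined with the remainder control $\mathcal{E}_d \to 0$ in probability, this gives $R_d^{\gbO}(X^d,Y^d) \xrightarrow{\mathrm{law}} Z$. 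Assumption~\ref{hyp:definite_positive_covariance_matrix} is used here only to ensure that $S^{\gbO}(x,h_d)$ is a well-defined smooth positive matrix square-root, so that $r$ and its derivatives in $h$ up to order $10$ are well-defined. A uniform integrability argument (based on Assumption~\ref{hyp:log_density}.\ref{item:hyp_log_density_polynomial}--\ref{item:hyp_log_density_moments}, which gives polynomial-growth moments of $X_1$ and Gaussian moments of $\xi_1$) upgrades convergence in law of $\min(1,e^{R_d^{\gbO}})$ to convergence of expectations, and the standard identity $\mathbb{E}[\min(1,e^Z)] = 2\Phi(-\sigma/2)$ for $Z \sim \mathcal{N}(-\sigma^2/2,\sigma^2)$ yields $\acceptMean^{\gbO}(\ell) = 2\Phi(-K^{\gbO}\ell^5/2)$.

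The main obstacle is the remainder control: because the proposal involves the three matrix functionals $\mathscr{T}_1,\mathscr{T}_2,\mathscr{T}_3$, the scalar function $r(x_j,\xi_j,h)$ is considerably more intricate than its MALA or fMALA analogues, and bounding $L_{10}(x_j,\xi_j,h^*_d)$ uniformly in $d$ requires $C^{10}$ regularity on $g$ together with polynomial-growth bounds on $g^{(i)}$, $i \le 10$. The boundedness of $g''$ (Assumption~\ref{hyp:log_density}.\ref{item:hyp_log_density_lipschitz_Cten}) is critical here: it keeps the eigenvalues of $Df(x)$ in a bounded set, so the scalar analogues of $\mathscr{T}_k(\cdot,h,a)$ and all their $h$-derivatives remain smooth and bounded for $h$ in a neighbourhood of $0$, which is exactly what is needed to turn the expansion into a quantitative bound. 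Once this is in hand, $\mathbb{E}[|\mathcal{E}_d|] = O(h_d^{1/2}) \to 0$ and the proof concludes as in the fMALA case.
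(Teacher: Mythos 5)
Your overall architecture is the same as the paper's: since $Df(x)=\diag(g''(x_1),\dots,g''(x_d))$, the matrix functionals $\opT_k$ act coordinatewise, the proposal factorises, the log acceptance ratio is a sum of iid scalar terms, and the result follows from a Taylor expansion in $\sqrt{h_d}$ plus a CLT. You also correctly identify the two ingredients that the paper adds for gbOMA beyond the fMALA case, namely the uniform bounds on $\partial_h^k\opT_i(g''(x),h,a)$ (from boundedness of $g''$) and the uniform lower bound on the variance term (from Assumption~\ref{hyp:definite_positive_covariance_matrix}).

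However, there is a genuine error in your moment bookkeeping for the limiting mean. You assert $\E[C_5(x_1,\xi_1)]=-\tfrac12\Var(C_5(x_1,\xi_1))$ and feed this into the CLT. This is false: $C_5$ contains only odd powers of $\xi_1$ (see Appendix~\ref{app:expression_C5}), so $\E[C_5]=0$; and if $\E[C_5]$ were nonzero, $d^{-1/2}\sum_j C_5(x_j,\xi_j)$ would diverge like $\sqrt{d}\,\E[C_5]$ rather than satisfy a CLT. The Roberts--Gelman--Gilks identity actually lives one order higher: the correct statement (equation~\eqref{eq:lem_taylor_expansion_expe2} of Lemma~\ref{lem:taylor_expansion}) is $\Var(C_5)=\ell^{10}(K^{\gbO})^2=-2\,\E[C_{10}]$, where $C_{10}$ is the coefficient of $h_d^{5}=\ell^{10}d^{-1}$ in the per-coordinate expansion. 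Summed over $d$ coordinates this term is $O(1)$ and converges (by the law of large numbers) to $\E[C_{10}]=-\ell^{10}(K^{\gbO})^2/2$, and \emph{this} is the source of the $-\sigma^2/2$ mean of the limiting Gaussian. Consequently your remainder $\mathcal{E}_d$ does not tend to zero in probability --- it tends to the constant $-\ell^{10}(K^{\gbO})^2/2$ --- and your truncation of the expansion at order $h^{9/2}$ with a Lagrange remainder $L_{10}=O(h^{5})$ per coordinate is one order too short: $d\cdot h_d^{5}=\ell^{10}$ is not negligible. You must expand through order $h^{5}$ (terms $C_5,\dots,C_{10}$), show $\E[C_j]=0$ for $j=6,\dots,9$ so those sums vanish after normalisation, isolate the $C_{10}$ contribution as the mean shift, and only then bound the true remainder $C_{11}=O(h^{11/2})$, whose sum is $O(d^{-1/10})$. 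Without this correction your limit would be $\mathcal{N}(0,\sigma^2)$ and $\E[1\wedge\rme^{Z}]$ would not equal $2\Phi(-K^{\gbO}\ell^5/2)$.
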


\begin{theorem} \label{theo:scaling:gbO}
Assume Assumptions~\ref{hyp:definite_positive_covariance_matrix} and
\ref{hyp:log_density}.
Let $\{G_t^{d,\gbO}  = \procLine^{d,\gbO}_{t,1} ,\   t \in \IR_+\} $ be the process corresponding to
the first  component of $\procLine^{d,\gbO}$. Then, $\defSet{G^{d,\gbO},\ d \in \IN^*}$ converges weakly (in the Skorokhod
topology) to the solution $\{G_t^{\gbO} ,\  t \in \IR_+ \}$ of the Langevin equation defined
by:
\[
\rmd G_t ^{\gbO}  = (\speedMeasure^{\gbO}(\ell))^{(1/2)} \rmd B_t + (1/2) \speedMeasure^{\gbO}(\ell) \nabla \log \pi_c(G_t^{\gbO}) \rmd t
\eqsp,
\]
where $\speedMeasure^{\gbO} (\ell) = 2 \ell^2 \Phi(-K^{\gbO} \ell^5/2)$ is the speed of the
limiting diffusion. Furthermore, $\speedMeasure ^{\gbO} (\ell)$ is maximised at the unique value of $\ell$
for which $\acceptMean^{\gbO} (\ell) = 0.704343$.
\end{theorem}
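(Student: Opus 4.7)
The plan is to follow the Ethier--Kurtz martingale/generator-convergence programme, in direct analogy with the proof of Theorem~\ref{theo:scaling:fMALA}. For a test function $\phi\in C_c^\infty(\R)$, introduce the generator of the first coordinate of the jump process at $z\in\R$,
\[
L_d^{\gbO}\phi(z) \;=\; d^{1/5}\,\E\!\left[\bigl(\phi(y_1)-\phi(z)\bigr)\bigl(1\wedge e^{R_d^{\gbO}(x,y)}\bigr)\,\big|\,x_1=z\right],
\]
where $x\sim\pi_d$ (conditioned on $x_1=z$), $y=\mu^{\gbO}(x,h_d)+S^{\gbO}(x,h_d)\xi$, $\xi\sim\mathcal{N}(0,I_d)$, and $h_d=\ell^2 d^{-1/5}$. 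The target is the limiting generator $L^{\gbO}\phi(z)=\tfrac12\speedMeasure^{\gbO}(\ell)\phi''(z)+\tfrac12\speedMeasure^{\gbO}(\ell)g'(z)\phi'(z)$, which is the generator of the one-dimensional Langevin SDE with target $\pi_c\propto e^g$ and speed $\speedMeasure^{\gbO}(\ell)$. Weak convergence will follow from uniform (on compacts) convergence of $L_d^{\gbO}\phi$ to $L^{\gbO}\phi$ on a core, plus tightness in the Skorokhod topology and uniqueness of the martingale problem for $L^{\gbO}$ (immediate since $g\in C^{10}$ by Assumption~\ref{hyp:log_density}).

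Taylor-expanding $\phi(y_1)-\phi(z)$ to second order in $y_1-z$ reduces the generator convergence to the three moment asymptotics
\begin{align*}
d^{1/5}\,\E\!\left[(y_1-z)\,\alpha_d^{\gbO}\,\big|\,x_1=z\right] &\longrightarrow \tfrac{1}{2}\speedMeasure^{\gbO}(\ell)\,g'(z),\\
d^{1/5}\,\E\!\left[(y_1-z)^2\,\alpha_d^{\gbO}\,\big|\,x_1=z\right] &\longrightarrow \speedMeasure^{\gbO}(\ell),\\
d^{1/5}\,\E\!\left[|y_1-z|^3\,\alpha_d^{\gbO}\,\big|\,x_1=z\right] &\longrightarrow 0.
\end{align*}
The first two produce exactly the drift and diffusion coefficients of $L^{\gbO}$, while the third controls the Taylor remainder. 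From the explicit form of \eqref{eq:gbO}, the one-dimensional increment already admits the expansion $y_1-z = \ell d^{-1/10}\xi_1+\tfrac{h_d}{2}g'(z)+o_{\Prm}(h_d)$, so what remains is the decoupling of the acceptance factor $\alpha_d^{\gbO}$ from the first coordinate.

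The decoupling is the main technical step and is also the mechanism behind Theorem~\ref{theo:accept_ratio_gbO}. By construction of gbOMA, the coefficients $C_1,\ldots,C_4$ in the expansion \eqref{eq:expansion_2} vanish, so the exponent $R_d^{\gbO}(x,y)$ splits as a first-coordinate remainder of order $h_d^{5/2}=\bigo(d^{-1/2})$ plus a sum over $j\geq 2$ of i.i.d.\ mean-zero functions of $(x_j,\xi_j)$ whose total variance is $\bigo(1)$. The Lindeberg--Feller CLT, whose moment hypotheses are met through the polynomial-growth and integrability conditions of Assumption~\ref{hyp:log_density}, then shows that conditionally on $(x_1,\xi_1)$ the exponent $R_d^{\gbO}$ is asymptotically $\mathcal{N}(-\sigma^2/2,\sigma^2)$ with $\sigma^2=(K^{\gbO})^2\ell^{10}/4$. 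Using the Gaussian identity $\E[1\wedge e^{Z}]=2\Phi(-\sigma/2)$ for such $Z$, conditioning on $(x_1,\xi_1)$ and integrating yields $\acceptMean^{\gbO}(\ell)=2\Phi(-K^{\gbO}\ell^5/2)$ as the limiting acceptance factor inside the moment integrals. Uniform integrability, needed to pass from this pointwise convergence to $L^1$ convergence of the three displays above, is supplied once more by Assumption~\ref{hyp:log_density} combined with Gaussian moments of $\xi$.

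For tightness of $\{G^{d,\gbO}\}$ I would apply Aldous's criterion: jumps are of size $\bigo(d^{-1/10})$ with rate $d^{1/5}$, and uniform compact containment is free because $G^{d,\gbO}$ is stationary. For the universal optimal-acceptance claim, substitute $u=K^{\gbO}\ell^5/2$ so that $\speedMeasure^{\gbO}(\ell)=2\,(2u/K^{\gbO})^{2/5}\Phi(-u)$; the first-order condition in $\ell$ reduces to $(2/5)\Phi(-u^\star)=u^\star\varphi(u^\star)$ with $\varphi$ the standard normal density, an equation independent of $K^{\gbO}$, whose unique positive root $u^\star$ yields $\acceptMean^{\gbO}(\ell^\star)=2\Phi(-u^\star)=0.704343$. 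The hard part of the whole argument is controlling the higher-order coefficients $C_5,\ldots,C_{10}$ and the Taylor remainder in $R_d^{\gbO}$ uniformly in $d$: $\mu^{\gbO}$ and $S^{\gbO}$ involve the matrix functionals $\opT_1,\opT_2,\opT_3$ of the Jacobian and Hessian of $f$, so the $C_i$'s are rational combinations of derivatives of $g$ up to order $10$ and must be bounded in $L^2(\pi_d)$ via a Cauchy--Schwarz splitting that exploits exactly the polynomial growth and moment hypotheses in Assumption~\ref{hyp:log_density}.
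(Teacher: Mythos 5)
Your strategy is the paper's strategy: an Ethier--Kurtz generator-convergence argument, the Taylor expansion of the log Hastings ratio with $C_1,\dots,C_4$ vanishing by construction of the proposal, a CLT for the surviving sum yielding the limiting acceptance probability $2\Phi(-K^{\gbO}\ell^5/2)$, decoupling of the first coordinate from the acceptance factor, and the substitution $u=K^{\gbO}\ell^5/2$ for the universal optimum (your first-order condition $(2/5)\Phi(-u^\star)=u^\star\varphi(u^\star)$ is exactly the right one). The paper itself proves this theorem by reduction to the fMALA case in Appendix~\ref{app:scaling_proof}; the only gbOMA-specific content of its proof is (i) uniform-in-$x$ bounds on $\partial_h^k \opT_i(g''(x),h,a)$ for $k\le 11$, which rest on the boundedness of $g''$ from Assumption~\ref{hyp:log_density}, and (ii) a uniform lower bound $m_0>0$ on the variance map, which is where Assumption~\ref{hyp:definite_positive_covariance_matrix} enters.

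Three concrete soft spots remain in your sketch. First, for finite $d$ the first coordinate of $\procLine^{d,\gbO}$ is \emph{not} Markov, so a ``generator'' $L_d^{\gbO}\phi(z)$ defined by conditioning on $x_1=z$ and averaging out the other coordinates is not the object to which the Ethier--Kurtz corollary applies. The paper instead applies the full $d$-dimensional generator to functions of the first coordinate and proves convergence \emph{uniformly over sets} $F_d\subset\R^d$ with $d^{1/5}\pi_d(F_d^c)\to 0$; correspondingly, a pointwise Lindeberg--Feller statement does not suffice and must be upgraded to the uniform CLT of Lemma~\ref{lem:unif_TCL}, whose proof controls characteristic functions uniformly over $F_d^3$. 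Second, your variance constant is internally inconsistent: the centered sum converges to $\mathcal{N}(0,\ell^{10}(K^{\gbO})^2)$ with the mean shift $-\ell^{10}(K^{\gbO})^2/2$ supplied by $C_{10}$ (see \eqref{eq:lem_taylor_expansion_expe2}), so $\sigma^2=\ell^{10}(K^{\gbO})^2$ rather than $(K^{\gbO})^2\ell^{10}/4$; with your value the identity $\E[1\wedge \rme^{Z}]=2\Phi(-\sigma/2)$ would produce $2\Phi(-K^{\gbO}\ell^5/4)$, contradicting the stated limit. Third, you never invoke Assumption~\ref{hyp:definite_positive_covariance_matrix}: without the resulting lower bound on $S^{\gbO}$, the transition density \eqref{eq:transition_density_Gaussian}, its logarithm and the inverse covariance entering the Hastings ratio are not controlled, and the analogue of Lemma~\ref{lem:taylor_expansion} cannot be carried out. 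None of this changes the architecture of the argument, but these are precisely the points the paper's short proof of this theorem exists to supply.
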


\begin{proof}

Note that under Assumption~\ref{hyp:log_density}-\ref{item:hyp_log_density_lipschitz_Cten}, at fixed $a>0$, using the regularity properties of
$(x,h) \mapsto \opT_i(x,h,a)$ on $\rset^2$ for $i = 1, \dots,3$, there exists an open interval $I$, which
contains $0$, and $M_0 \geq 0$ such that for all $x \in \IR$, $k=1, \cdots, 11$, and $i=1, \cdots, 3$
\[
\abs{\frac{\partial^k\left( \opT_i(g''(x), h , a )\right)} {\partial h ^k}} \leq M_0 \ \ \forall h \in I \eqsp.
\]
Using in addition Assumption~\ref{hyp:definite_positive_covariance_matrix} there exists $m_0 >0$
such that for all $h \in I$ and for all $\varx \in \IR$,
\[
 \opT_1 ( g''(\varx), 2 h , a_4)  + \left((a_4/2) - (1/6)\right) \opT_2(g''(\varx), 2 h , a_5) \geq m_0 \eqsp.
\]
Using these two results, the proof of both theorems follows the same lines as
Theorems~\ref{theo:accept_ratio_fMALA} and \ref{theo:scaling:fMALA}, which can be found in
Appendix~\ref{app:scaling_proof}.
  \end{proof}
%\begin{remark}
%Theorems \ref{theo:accept_ratio_fMALA} and \ref{theo:accept_ratio_gbO} imply that for fMALA and gbOMA we have that $\gamma_{0}=1/5$. This is because...
%\end{remark}

\section{Geometric ergodicity results for high order Langevin schemes}
\label{sec:convergence_erg}
Having established the scaling behaviour of the different proposals in the previous section,
  we now proceed with establishing geometric ergodicity results for our new Metropolis algorithms.  Furthermore, for completeness, we study the
behaviour of the corresponding unadjusted proposal. For simplicity, we
will take in the following $\Sigma = I_d$ and we limit our study of gbOMA to the one of
bOMA, which is given by:
\begin{align}
  \label{eq:bO}
  y^{\bO} &= \mu^{\bO}(x,h)  + S^{\bO}(x,h) \ \xi \eqsp,  \\
  \nonumber \mu^{\bO}(x,h)&= \varx + \opT_1(D f (\varx) , h , 1)
  f(\varx) + (2/3) \opT_2(D f
  (\varx) , h , 1)   f(\varx) \\
  \nonumber &- (1/3)
  \opT_3(D f(\varx) , h, 1)  \{\Sigma: D^{2}f(x)\} \eqsp, \\
  \nonumber S^{\bO}(x,h)&= \bracket{ \opT_1(D f (\varx) , 2 h , 1) + (1/3)
  \opT_2(D f (\varx) , 2 h , 1)}^{1/2} \eqsp,
\end{align}
where $\opT_1$, $\opT_2$ and $\opT_3$ are respectively defined by \eqref{eq:operatorOne}, \eqref{eq:operatorTwo} and \eqref{eq:operatorThree}.
First, let us begin with some definitions. For a signed measure $\nu$ on $\rset^d$, we
define the total variation norm of $\nu$ by
\[
\norm{\nu}_{\TV} = \sup_{A \in \mathcal{B}(\rset^d)} \abs{\nu(A)} \eqsp,
\]
where $\mathcal{B}(\rset^d)$ is the Borel $\sigma$-algebra of $\rset^d$.
Let $P$ be a Markov kernel with invariant measure $\pi$. For a given measurable function $V: \rset^d \to \coint{1,\plusinfty}$, we
will say that $P$ is $V$-geometrically ergodic if there exist $C \geq 0$ and $\rho \in
\coint{0,1}$ such that for all $x \in \rset^d$ and $n \geq 0$
\[
\norm{P^n(x, \cdot) - \pi }_V \leq C \rho^n V(x) \eqsp,
\]
where for $\nu$ a signed measure on $\rset^d$, the $V$-norm $\norm{\cdot }_V$ is defined by
\[
\norm{\nu} = \sup_{\{ f\ ; \ \abs{f} \leq V \}} \int_{\rset^d} f(x) \nu(\rmd x ) \eqsp.
\]
We refer the reader to \cite{meyn:tweedie:2009} for the definitions of small sets,
$\varphi$-irreducibility and transience. Let $P$ be a Markov kernel on $\rset^d$,
$\Leb^d$-irreducible, where $\Leb^d$ is the Lebesgue measure on $\IR^d$, and aperiodic and $V :
\rset^d \to \coint{1,\plusinfty}$ be a measurable function. In order to establish that
$P$ is $V$-geometric ergodicity, a sufficient and necessary condition is given by
a geometrical drift (see \cite[Theorem 15.0.1]{meyn:tweedie:2009}), namely  for some small set
$\smallSet$, there exist $\lambda <1$ and $b < \plusinfty$ such that for all $x \in \rset^d$:
\begin{equation}
\label{eq:drift_condition}
PV(x) \leq \lambda V(x) + b \1_{\smallSet}(x) \eqsp.
\end{equation}

%The purpose of this section is to establish ergodicity result for
%fMALA, mOMA and bOMA.
Note that the different
considered proposals belong to the class of Gaussian Markov
kernels. Namely, let $Q$ be a Markov kernel on $\rset^d$. We say that
$Q$ is a Gaussian Markov kernel if for all $x \in \rset^d$, $Q(x,\cdot)$
is a Gaussian measure, with mean $\mu(x)$ and covariance matrix
$
%\modk
{S(x)S^{T}(x)}$, where $x \mapsto \mu(x)$ and $x \mapsto S(x)$ are measurable
functions from $\rset^d$ to respectively $\rset^d$ and $\mathcal{S}^*_+(\rset^d)$, the set of symmetric {positive definite matrices}
 of dimension $d$. These two
functions will be  referred to as the mean value map and the the
variance map {respectively}. The Markov kernel $Q$ has transition density $q$ given by:
%\modk
{\begin{equation}
\label{eq:transition_density_Gaussian}
q(\varx,\vary) = \frac{1}{{(2 \uppi)^{d/2} \abs{S(\varx)}}}
\exp\left( -(1/2)\ps{S(\varx)^{-2} (\vary-\mu(\varx))}{(\vary-\mu(\varx))}\right) \eqsp,
\end{equation}}
where for $\Mrm \in \rset^{d \times d} $, $\abs{\Mrm}$ denotes the determinant of $\Mrm$.
Geometric ergodicity of Markov Chains with Gaussian Markov kernels and the corresponding
  Metropolis-Hastings algorithms was the subject of study of \cite{RT96,hansen:2003}. But
  contrary to \cite{hansen:2003}, we assume for simplicity the following assumption on the
  functions $\mu:\R^d\rightarrow \R^d$ and $S:\R^d\rightarrow \mathcal{S}^*_+(\rset^d)$:
\begin{assumption}
\label{hyp:Gaussian_Markov_kernel}
The functions $x \mapsto \mu(x)$ and $ x \mapsto S(x)$ are continuous. % from $\rset^d$ to resp. $\rset^d$ and $\mathcal{S}^*_+(\rset^d)$.
\end{assumption}
Note that if $\pi$, a target probability
measure on $\IR^d$, is absolutely continuous with respect to the
Lebesgue measure with density still denoted by $\pi$, the following
assumption ensures that the various different proposals introduced in this paper satisfy Assumption
\ref{hyp:Gaussian_Markov_kernel}:
\begin{assumption}
\label{hyp:density_convergence}
 The log-density $g$ of $\pi$ belongs to  $C^3(\IR^d)$.
\end{assumption}
%\modk{ALAIN: Before we have assumed g is $C^{10}$, I know that we actually need something weaker for these results but should we be consistent? The same applies in the definition of the classes of polynomials}
%\moda{I would say it is classical to have different assumptions between geometric ergodicity result and scaling ones, so I think it would be better to keep this assumption.}
We proceed in Section \ref{subsec:convergence_adjusted} with presenting and extending
where necessary the main results about geometric ergodicity of Metropolis-Hasting algorithms using
Gaussian proposals. In Section \ref{subsec:presentation_potential}, we then introduce two
different potential classes on which we apply our result in Section
\ref{subsec:application_adjusted}. Finally in
Section~\ref{subsec:convergence_unadjusted}, for completeness, we make the same kind of
study but for unadjusted Gaussian Markov kernels on $\rset$.

%As mentioned, the different proposals are Gaussian Markov kernel, that is why we discuss in the next sections ergodicity results for Gaussian Markov kernels and Hastings-Metropolis algorithm based on Gaussian Markov kernels. Finally, we apply these results \modk{to the proposals introduced before}.

%The ideas of the next
%sections mainly come from \cite{hansen:2003} and \cite{RT96}. Indeed,
%\cite{hansen:2003} had already established several ergodicity results
%for Gaussian Markov kernel to study the convergence of the basic Ozaki
%discretisation. And most of the results from \cite{RT96} on the
%ergodicity of MALA and the unadjusted Langevin algorithm (ULA,
%also referred to as the Euler-Maruyama discretisation of the Langevin
%equation), extend to Gaussian Markov kernel and Hastings-Metropolis
%algorithm based on such kernels.

\subsection{Geometric ergodicity of Hastings-Metropolis algorithm based on Gaussian Markov kernel}
\label{subsec:convergence_adjusted}

We first present an extension of the result given in in \cite{hansen:2003} for geometric ergodicity of
 %such that a Gaussian Markov kernel used in a
 Metropolis-Hastings algorithms based on Gaussian proposal kernels.
 In particular,  let $Q$ be a Gaussian Markov kernel with mean value map and variance map satisfying Assumption \ref{hyp:Gaussian_Markov_kernel}.
We use such proposal in a Metropolis
algorithm with target density $\pi$ satisfying Assumption \ref{hyp:density_convergence}.
Then, the produced Markov kernel $P$ is given by
\begin{equation}
  \label{eq:kernel_MH}
P(\varx , \rmd \vary) = \alpha(\varx,\vary) q(\varx,\vary) \rmd \vary + \delta_{\varx}(\rmd \vary)
\int_{\IR^d}(1- \alpha(\varx,\vary)) q(\varx,\vary) \rmd \vary \eqsp,
\end{equation}
where $q$ and $\alpha$ are resp. given by \eqref{eq:transition_density_Gaussian} and  \eqref{eq:acceptance_ratio}.
% We now give some conditions to get geometric ergodicity for $Q$.

% \begin{assumption}
%   \label{hyp:gaussian_prop}
% \begin{enumerate}
%   \item
%   \label{item:hyp:gaussian_prop_cont}
% $\varx \mapsto \mu(\varx)$ and $\varx \mapsto S(\varx)$ are continuous.
% \item
%   \label{item:hyp:gaussian_prop_mu} $\limsup_{\norm{\varx} \to \plusinfty}
% \norm{\mu(\varx)}/ \norm{\varx} = c < 1$.
% \item
%     \label{item:hyp:gaussian_prop_lim} $\limsup_{\norm{\varx} \to \plusinfty}
% \norm{S(\varx)}/ \norm{\varx} = 0$.
%   \end{enumerate}
% \end{assumption}
\begin{assumption}
\label{hyp:ratio_convergence_bO}
We assume
$\liminf_{\norm{\varx} \to \plusinfty} \int_{\IR^d} \alpha(\varx,\vary) q(\varx,\vary) \rmd
\vary > 0 $.
  \end{assumption}
Note that this condition is necessary to obtain the geometric ergodicity of a Metropolis-Hastings algorithm by \cite[Theorem 5.1]{roberts:tweedie:1996}.
%Moreover, it is standard to check that  under Assumption~\ref{hyp:ratio_convergence_bO} if the proposal Markov kernel is geometrically ergodic, the Markov kernel produced by the Metropolis-Hastings algorithm is geometrically ergodic too.
We shall follow a well-known technique in MCMC theory in demonstrating that Assumption~\ref{hyp:ratio_convergence_bO} allows us to ensure that
geometric ergodicity of the algorithm is inherited from that of the proposal Markov chain itself.
Thus, in the following lemma we combine the conditions given by \cite{hansen:2003}, which imply geometric ergodicity of Gaussian Markov kernels, with Assumption ~\ref{hyp:ratio_convergence_bO} to get geometric ergodicity of the resultant Metropolis-Hastings Markov kernels.

\begin{lemma}
  \label{propo:convergence_gaussian_proposal}
  Assume Assumptions
  \ref{hyp:Gaussian_Markov_kernel}, \ref{hyp:ratio_convergence_bO}, and
  there exists $\tau \in \ooint{0,1}$ such that
\begin{equation}
 \label{eq:hyp:gaussian_prop_mu}
\limsup_{\norm{\varx} \to \plusinfty} \norm{\mu(\varx)}/ \norm{\varx} = \tau, \text{ and }  \limsup_{\norm{\varx} \to \plusinfty} \norm{S(\varx)}/ \norm{\varx} = 0 \eqsp.
\end{equation}
Then, the Markov kernel $P$ given by \eqref{eq:kernel_MH} are
$V$-geometrically ergodic, where $V(\varx) = 1+\norm{\varx}^2$.
  \end{lemma}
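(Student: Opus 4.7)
The plan is to apply Theorem~15.0.1 of \cite{meyn:tweedie:2009}, which reduces $V$-geometric ergodicity to two ingredients: (a) $\Leb^d$-irreducibility, aperiodicity and the small-set property for some set $\smallSet$, and (b) a geometric drift $PV \leq \lambda V + b \1_{\smallSet}$ for some $\lambda < 1$ and $b < \plusinfty$. Ingredient (a) is handled first. Under Assumption~\ref{hyp:Gaussian_Markov_kernel} the density $q(x,y)$ in \eqref{eq:transition_density_Gaussian} is jointly continuous and strictly positive on $\rset^d \times \rset^d$, and under Assumption~\ref{hyp:density_convergence} the same is true of $\alpha(x,y) q(x,y)$; a standard Doeblin-type argument then shows every compact set is small and $P$ is $\Leb^d$-irreducible, while aperiodicity follows from the positive holding probability produced by Metropolis rejection.

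The core of the proof is the geometric drift for $V(x) = 1 + \norm{x}^2$, starting from the identity
\[
PV(x) - V(x) = \int_{\rset^d} \alpha(x,y)\bigl[V(y) - V(x)\bigr]\, q(x,y)\, \rmd y\eqsp.
\]
The idea is to exploit that under the Gaussian proposal $y = \mu(x) + S(x)\xi$, the random variable $\norm{y}^2$ concentrates around its mean $\norm{\mu(x)}^2 + \operatorname{trace}(S(x) S(x)^T)$, which by \eqref{eq:hyp:gaussian_prop_mu} is at most $(\tau^2 + o(1))\norm{x}^2$ as $\norm{x} \to \plusinfty$. Fix any $\lambda_0 \in \ooint{\tau^2,1}$ and introduce the typical set $A_x = \defSet{y \in \rset^d : \norm{y}^2 \leq \lambda_0 \norm{x}^2}$. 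A direct computation giving $\operatorname{Var}_Q(\norm{y}^2) \leq 4\norm{\mu(x)}^2\norm{S(x)}^2 + 2d\norm{S(x)}^4$, combined with the second part of \eqref{eq:hyp:gaussian_prop_mu} and Chebyshev's inequality, yields $Q(x, A_x^c) \to 0$ as $\norm{x} \to \plusinfty$; in particular, $Q(x, A_x^c)$ is eventually smaller than any prescribed fraction of the constant $\delta > 0$ given by Assumption~\ref{hyp:ratio_convergence_bO}.

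Splitting the drift integral along $A_x$ and $A_x^c$, one has on $A_x$ that $V(y) - V(x) \leq (\lambda_0 - 1) V(x) + 1$, which is eventually negative, so the contribution is bounded by $\bigl[(\lambda_0-1) V(x) + 1\bigr] \int_{A_x} \alpha(x,y) q(x,y)\, \rmd y$; and combining $\int \alpha q\, \rmd y \geq \delta$ (Assumption~\ref{hyp:ratio_convergence_bO}) with $Q(x, A_x^c) \leq \delta/2$ gives $\int_{A_x} \alpha q\, \rmd y \geq \delta/2$ for $\norm{x}$ large. On $A_x^c$, I would use the crude bound $\alpha[V(y) - V(x)] \leq V(y)$ together with Cauchy--Schwarz and the moment estimate $\IE_Q[V(y)^2] \leq C V(x)^2$ (immediate from \eqref{eq:hyp:gaussian_prop_mu}) to obtain $\int_{A_x^c} \alpha[V(y) - V(x)] q\, \rmd y \leq C'\sqrt{Q(x, A_x^c)}\, V(x)$. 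Assembling produces
\[
PV(x) \leq \Bigl(1 - \tfrac{(1-\lambda_0)\delta}{2} + o(1)\Bigr) V(x) + O(1) \qquad \text{as } \norm{x} \to \plusinfty \eqsp,
\]
so that \eqref{eq:drift_condition} holds on the complement of some ball $\ball{0}{R}$; since $\ball{0}{R}$ is small by ingredient (a), Theorem~15.0.1 of \cite{meyn:tweedie:2009} delivers the desired $V$-geometric ergodicity.

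The main obstacle is the contribution from $A_x^c$: without a pointwise lower bound on $\alpha(x,y)$, one must ensure that the positive error $\sqrt{Q(x,A_x^c)}\, V(x)$ is dominated by the negative drift $-\tfrac{(1-\lambda_0)\delta}{2} V(x)$ gained on $A_x$. This relies quantitatively on the strict inequality $\tau < 1$ (to ensure a spectral gap between $\lambda_0$ and $\tau^2$) and on the sublinear growth $\norm{S(x)} = o(\norm{x})$ (to force the $Q$-concentration of $\norm{y}^2$ to be uniform in $\norm{x}$): both conditions in \eqref{eq:hyp:gaussian_prop_mu} are essential, the first for the drift and the second for the error control.
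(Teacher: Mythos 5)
Your argument is correct, and part (a) (irreducibility, aperiodicity, compact sets small, reduction to the geometric drift condition via \cite[Theorem 15.0.1]{meyn:tweedie:2009}) is exactly what the paper does. For the drift itself, however, you take a genuinely different route. The paper's proof is a one-line expansion: writing $y=\mu(x)+S(x)\GaussianRV$ and $V(y)-V(x)=\bigl(\norm{\mu(x)}^2-\norm{x}^2\bigr)+2\ps{\mu(x)}{S(x)\GaussianRV}+\norm{S(x)\GaussianRV}^2$, it keeps the acceptance probability only on the deterministic term $\bigl(\norm{\mu(x)}^2-\norm{x}^2\bigr)\int\alpha q$, which is $\leq -(1-\tau^2+o(1))\norm{x}^2\cdot\liminf\int\alpha q$, and bounds the two fluctuation terms crudely with $\alpha\leq 1$ and Cauchy--Schwarz by $(2\norm{\mu(x)}\norm{S(x)}+\norm{S(x)}^2)\,\IE[\max(\norm{\GaussianRV}^2,1)]=o(\norm{x}^2)$; no set decomposition is needed, and one directly gets $\limsup PV/V\leq 1-(1-\tau^2)\liminf\int\alpha q<1$. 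Your typical-set decomposition $A_x=\{\norm{y}^2\leq\lambda_0\norm{x}^2\}$ with Chebyshev concentration of $\norm{y}^2$, plus the Cauchy--Schwarz control of the $A_x^c$ contribution via $\IE_Q[V(y)^2]\leq CV(x)^2$, is a valid alternative: it costs you an extra moment estimate and yields the slightly weaker contraction constant $1-(1-\lambda_0)\delta/2$ in place of the paper's $1-(1-\tau^2)\delta$, but it has the merit of making explicit \emph{where} the acceptance lower bound is used (only on the high-probability set where the proposal contracts) and would adapt more easily to non-quadratic Lyapunov functions for which the exact expansion of $V(\mu(x)+S(x)\GaussianRV)$ is unavailable. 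Both proofs rely on the same two pillars --- the strict contraction $\tau<1$ of the mean map and the sublinearity $\norm{S(x)}=o(\norm{x})$ of the variance map --- so the conclusion and hypotheses match.
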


\begin{proof}
The proof is postponed to Appendix~\ref{subsec:lemme_convergence_gaussian_proposal}.
\end{proof}
We now provide some conditions which imply that $P$ is not geometrically ergodic.
\begin{theorem}
\label{theo:non_geo_erg_Gaussian_MH}
Assume Assumptions \ref{hyp:Gaussian_Markov_kernel},\ref{hyp:density_convergence}, that $\pi$ is bounded and there exists $\epsilon >0$ such that
\begin{equation}
  \label{eq:assum_theo_non_geo}
\liminf_{\norm{x} \to \plusinfty} \norm{S(x)^{-1} \mu(x)}\norm{x}^{-1} > \epsilon^{-1} \eqsp, \qquad  \liminf_{\norm{x} \to \plusinfty}  \inf_{\norm{y} = 1} \norm{S(x) y } \geq \epsilon  \eqsp,
\end{equation}
and
\begin{equation}  \label{eq:assum_theo_non_geo2}
\lim_{\norm{x} \to \plusinfty}\log\left(\abs{S(x)} \right) / \norm{x}^2 = 0 \eqsp.
\end{equation}
Then, $P$ is not geometrically ergodic.
\end{theorem}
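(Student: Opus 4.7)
The plan is to establish the stronger statement that the mean acceptance probability
\[
a(x) := \int_{\rset^d} \alpha(x,y) q(x,y)\,\rmd y
\]
vanishes as $\|x\|\to\infty$, and then deduce non-geometric ergodicity through a Foster-Lyapunov obstruction, following the classical blueprint of \cite{RT96,hansen:2003}.

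\textbf{Step 1 (Acceptance vanishes at infinity).} Parametrising the proposal as $y = \mu(x)+S(x)\xi$ with $\xi \sim \mathcal{N}(0,I_d)$, the log Metropolis ratio reads
\[
\log R(x,y) = \log\frac{\pi(y)}{\pi(x)} + \log\frac{|S(x)|}{|S(y)|} + \frac{1}{2}\|\xi\|^2 - \frac{1}{2}\|S(y)^{-1}(x-\mu(y))\|^2.
\]
The first bound in \eqref{eq:assum_theo_non_geo}, combined with $\|S(x)^{-1}\|\leq \epsilon^{-1}$ from its second bound, implies $\|\mu(x)\|\geq \kappa\|x\|$ for some $\kappa>1$ and $\|x\|$ large. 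Applying the reverse triangle inequality to $S(y)^{-1}(x-\mu(y)) = S(y)^{-1}x - S(y)^{-1}\mu(y)$ and using the same pair of assumptions, one obtains, as soon as $\|y\|\geq \kappa'\|x\|$ for some $1<\kappa'\leq \kappa$, the lower bound $\|S(y)^{-1}(x-\mu(y))\|^2 \geq C\|x\|^2$ for some $C>0$. Gaussian concentration of the noise $S(x)\xi$ (via Chebyshev, using $\mathbb{E}\|S(x)\xi\|^2 = \operatorname{tr}(S(x)S(x)^T)$) ensures that this lower bound on $\|y\|$ holds with $q(x,\cdot)$-probability tending to one. Together with the sub-quadratic growth of $\log|S(x)|$ from \eqref{eq:assum_theo_non_geo2}, the boundedness of $\pi$, and a case split on the tail decay of $\pi$ (when $\pi$ decays no faster than Gaussian the negative quadratic dominates $-\log\pi(x)$; when $\pi$ has lighter tails the term $\log\pi(y)$ is itself strongly negative for typical $y$), one gets $\log R(x,y)\to-\infty$ in $q(x,\cdot)$-probability, whence $a(x)\to 0$ by dominated convergence.

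\textbf{Step 2 (Obstruction to the geometric drift).} Assume for contradiction that $P$ is $V$-geometrically ergodic for some $V:\rset^d\to\coint{1,\plusinfty}$. By \cite[Theorem 15.0.1]{meyn:tweedie:2009}, there exist a small (hence bounded) set $\smallSet$, $\lambda\in\coint{0,1}$, and $b<\plusinfty$ with $PV\leq\lambda V+b\1_{\smallSet}$. Decomposing
\[
PV(x) = (1-a(x))V(x) + \int V(y)\alpha(x,y)q(x,y)\,\rmd y
\]
and using $V\geq 1$ in the integral yields $V(x)-PV(x)\leq a(x)(V(x)-1)$. For $x\notin\smallSet$, the drift gives $V(x)-PV(x)\geq (1-\lambda)V(x)$, and also $\lambda V(x)\geq PV(x)\geq 1$, so $V(x)\geq 1/\lambda>1$. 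Combining,
\[
a(x)\;\geq\; (1-\lambda)\,\frac{V(x)}{V(x)-1}\;\geq\; 1-\lambda\;>\;0 \qquad \text{for all } x\notin\smallSet.
\]
Since $\smallSet$ is bounded, this contradicts $a(x)\to 0$ from Step~1, and therefore $P$ is not geometrically ergodic.

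\textbf{Main obstacle.} The technical heart of the argument is Step~1, and specifically the control of $\log\pi(y)-\log\pi(x)$: since $\pi$ is only assumed bounded, $-\log\pi(x)$ may grow arbitrarily fast as $\|x\|\to\infty$, and the analysis must split according to whether the negative quadratic $-\tfrac12\|S(y)^{-1}(x-\mu(y))\|^2$ or the factor $\log\pi(y)$ provides the dominant negative contribution. Establishing the requisite Gaussian concentration for $S(x)\xi$ uniformly in $x$, given only $\log\det S(x)=o(\|x\|^2)$ and a lower bound on $\sigma_{\min}(S(x))$, is a second delicate point.
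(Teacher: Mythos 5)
Your Step 2 is essentially the standard necessary condition for geometric ergodicity of Metropolis--Hastings chains (the paper simply invokes \cite[Theorem 5.1]{roberts:tweedie:1996} for it, and note that a small set need not be bounded, so your parenthetical deserves care), but Step 1 contains the real gap, and it is not one that can be patched along the lines you sketch. You aim to prove the strong statement that $a(x)\to 0$ as $\norm{x}\to\plusinfty$, which requires controlling $\log(\pi(y)/\pi(x))$ for typical proposals $y$. Under the hypotheses of the theorem, $\pi$ is only bounded above, positive and $C^3$; nothing forces it to be radially decreasing, so for $x$ in a region where $\pi(x)$ is extremely small and $y=\mu(x)+S(x)\xi$ landing where $\pi$ is of order one, the ratio $\pi(y)/\pi(x)$ can exceed any prescribed function of $\norm{x}$ and in particular can swamp the factor $\exp(-C\norm{x}^2)$ coming from $q(y,x)/q(x,y)$. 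Your proposed ``case split on the tail decay of $\pi$'' presupposes a radial monotone structure that is not assumed, so the claim $a(x)\to 0$ is not provable (and is most likely false) in this generality. A secondary problem: your concentration step uses $\mathbb{E}\norm{S(x)\xi}^2=\operatorname{trace}(S(x)S(x)^T)$, but no upper bound on $\norm{S(x)}$ is available (only \eqref{eq:assum_theo_non_geo2} and a lower singular-value bound), so $\norm{S(x)\xi}$ need not be $o(\norm{x})$; one must instead work with the set $B(x)=\{y:\norm{S(x)^{-1}(y-\mu(x))}\le M\}$, which carries most of the proposal mass irrespective of $S(x)$.

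The paper's proof avoids the uncontrollable term entirely by turning the argument around. Assuming geometric ergodicity gives $\int\alpha(x,y)q(x,y)\,\rmd y\ge\eta>0$ for a.e.\ $x$. On the set $B(x)$ above one shows, using exactly \eqref{eq:assum_theo_non_geo} and \eqref{eq:assum_theo_non_geo2}, that $\norm{y}\ge\norm{x}$ and that $q(y,x)/q(x,y)\le\eta/4$ for $\norm{x}$ large. The lower bound $\eta$ on the acceptance rate then forces $\int_{B(x)}(\pi(y)/\pi(x))q(x,y)\,\rmd y\ge 2$, hence the existence of some $x'\in B(x)$ with $\pi(x')\ge(3/2)\pi(x)$ and $\norm{x'}\ge\norm{x}$. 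Iterating produces a sequence along which $\pi$ grows geometrically, contradicting the boundedness of $\pi$; at no point is the acceptance probability shown to vanish. You would need to adopt this (or a similarly indirect) construction to complete the proof.
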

\begin{proof}
The proof is postponed to Appendix~\ref{subsec:proof_theo_non_geo}.
  \end{proof}

\subsection{Exponential potentials}
\label{subsec:presentation_potential}
We illustrate our results on the following classes of density.

\subsubsection{The one-dimensional class $\Escr(\beta,\gamma)$}
Let $\pi$ be a probability density on $\IR$ with respect to the Lebesgue measure. We will say that $\pi \in
\Ecal(\beta,\gamma)$ if $\pi$ is positive, belongs to $C^3(\rset)$ and there exist $R_\pi,\beta >0$ such that
for all $x \in \rset$, $\abs{x} \geq R_\pi$,
\[
\pi(x) \propto \rme^{-\gamma \abs{x}^\beta} \eqsp.
\]
%(d)/(dx)((g beta abs(x)^beta (1-beta+g beta abs(x)^beta))/(e^(g abs(x)^beta) x^2)) = -(beta g abs(x)^beta e^(-g abs(x)^beta) (beta^2 g^2 x^2 abs(x)^(2 beta)+beta (3 beta-1) (-g) x^2 abs(x)^beta+2 beta g abs(x)^(beta+2)+(beta^2-3 beta+2) x^2))/x^5
Then for $\abs{x} \geq R_\pi$, $\log(\pi(x))' = -\gamma \beta x \abs{x}^{\beta-2}$, $\log(\pi(x))'' = -\gamma  \beta (\beta-1) \abs{x}^\beta/x^2$ and
$ \log(\pi(x))^{(3)} = -\gamma  \beta (\beta-1)(\beta-2)\abs{x}^\beta/x^3$.
\subsubsection{The multidimensional exponential class $\Pscr _m$}
\label{sec:poly_homo}
Let $\pi$ be a probability density on $\IR^d$ with respect to the Lebesgue measure. We
will say that $\pi \in \Pscr_m$ if it is positive, belongs to $C^3(\rset^d)$ and there exists $R_\pi \geq 0$ such that for all $\varx \in \rset^d$,
$\norm{\varx} \geq R_\pi$,
\[
\pi(\varx) \propto \rme^{-\qrm (\varx)} \eqsp,
\]
where $\qrm$ is a function of the following form. There exists a homogeneous polynomial $\prm$
of degree $m$ and a three-times continuously differentiable function $\rrm$ on $\IR^d$
satisfying
\begin{equation}
  \label{eq:petit_to_r_poly_homog}
\norm{D^2(\nabla r)(\varx)} \underset{\norm{x} \to \plusinfty}{=} o (\norm{\varx}^{m-3}) \eqsp,
\end{equation}
%where $\rmd^3$ is the differential of order $3$
and for all $x \in \rset^d$
\[
\qrm ( \varx ) = \prm (\varx) + \rrm (\varx) \eqsp.
\]
Recall that $\prm$ is an homogeneous polynomial of degree $m$ if for all $t \in \IR$ and $\varx
\in \IR^d$, $\prm(t \varx) = t^m \prm (\varx)$. Finally we define $\Pscr_m^+$,
the set of density $\pi \in \Pscr_m$ such that the Hessian  of $\prm$ at $x$, $\nabla^2 \prm(x)$ is positive definite for
all $x \not = 0$. \\
When $\prm$ is an homogeneous polynomial of degree $m$, it can be written as
\[
\prm(\varx) = \sum_{\abs{\kbf} = m} a_{\kbf} \varx^{\kbf} \eqsp,
\]
where $\kbf \in \IN^d$, $\abs{\kbf} = \sum_i k_i$ and $\varx^{\kbf} = x_1^{k_1} \cdots
x_d^{k_d}$. Then denoting by $\normalX = \varx/\norm{\varx}$, it is easy to see that the following relations holds for all $\varx \in \rset^d$.
\begin{align}
  \label{eq:orig_poly_homog}
\prm(\varx)& = \norm{x}^m \ \prm(\normalX) \\
  \label{eq:grad_poly_homog}
%\rmd\prm(\lambda\varx) = \lambda^{m-1}\rmd\prm(\varx) \qquad
\nabla \prm(\varx)& = \norm{x}^{m-1} \nabla \prm(\normalX) \\
\label{eq:hessian_poly_homog}
\nabla^2 \prm(\varx) &= \norm{x}^{m-2} \nabla^2 \prm(\normalX) \\
\label{eq:third_derivatives_poly_homog}
D^2(\nabla \prm)(x)  &= \norm{x}^{m-3} D^2 (\nabla \prm)(x) \\
\label{eq:grad_ps_poly_homog}
\ps{\nabla \prm(\varx)}{\varx} &= m\ \prm(\varx) \\
  \label{eq:hessian_ps1_poly_homog}
\nabla^2 \prm(\varx) \varx &= (m-1)\ \nabla \prm (\varx) \\
  \label{eq:hessian_ps1_poly_homog2}
\ps{\nabla^2 \prm (\varx) \varx}{\varx} &= m (m-1)\  \prm(\varx)\eqsp.
\end{align}
From \eqref{eq:hessian_poly_homog}, it follows that $\nabla^2 \prm (\varx)$ is definite
positive for all $\varx \in \IR^d \setminus {0}$ if and only if $\nabla^2 \prm (\nArrow)$ is
positive definite for all $\nArrow$, with $\norm{\nArrow}=1$. Then, $\prm$ belongs to $\Pscr_m^+$ only if
$m \geq 2$.

\subsection{Geometric ergodicity of the proposals: the case of Metropolis-Hastings algorithms}
\label{subsec:application_adjusted}
%\modk
{In this section we study the behaviour of our proposals within the Metropolis-Hastings framework. We will split our investigations in two parts: in the first we study fMALA and mOMA; while in the second we have a more detailed look in the properties of bOMA not only for the class  $\Escr(\beta,\gamma)$, but also for the polynomial class $\Pscr_m^+$.}

\subsubsection{Geometric ergodicity of fMALA, mOMA  for the class $\Escr(\beta,\gamma)$}
In the case $\beta \in \ooint{0,2}$, fMALA and mOMA have their mean map behaving like
$x -\beta \gamma x \abs{x}^{\beta-2}/2$ at infinity and their variance map bounded from above. This is exactly the behaviour that MALA \cite{RT96} has for the same values of $\beta$, thus one would expect them to behave in the same way.  This is indeed the case and thus using the same reasoning as in the proof  \cite[Theorem 4.3]{RT96} we deduce that the two algorithms are not geometrically ergodic for $\beta \in \ooint{0,1}$. Similarly, the proof in \cite[Theorem 4.1]{RT96} can be used to show that the two algorithms are geometrically ergodic for $\beta \in \coint{1,2}$. Furthermore, for values of $\beta \geq 2$ we have the following cases

%\moda{Alain: small typo in the statement of  \cite[Theorem 4.3]{RT96} in the
%  original paper, maybe we should include a correct one in the appendix ?}
\begin{enumerate}[(a)]
%\item
%For $\beta \in \ooint{0,1}$, all the proposals of the two algorithms are equivalent  at infinity to the one of MALA. Then using the same reasoning than the proof \cite[Theorem 4.3]{RT96}, we deduce that the two algorithm are not geometrically ergodic. The details are left to the reader.
%\item
%For $\beta \in \coint{1,2}$, all the proposals of the two algorithms are equivalent at infinity to the one of MALA again. Then the conditions of \cite[Theorem 4.1]{RT96} \textcolor{red}{Alain: small typo in the statement, maybe we should include a correct one in the appendix ?}, as for MALA see \cite[Section 4.3.1]{RT96}, are satisfied, and we deduce from this Theorem that the two algorithms are geometrically ergodic.
\item For $\beta =2$,
\begin{enumerate}[-]
\item  fMALA is geometrically ergodic if $h\gamma(1+h
  \gamma /6)\in \ooint{0,2}$ by  \cite[Theorem 4.1]{RT96},
  and not geometrically ergodic if $ h\gamma(1+h
  \gamma /6) >2$ by Theorem \ref{theo:non_geo_erg_Gaussian_MH},
  since $\mu^{\fMALA}$ is equivalent at infinity to $(1-h\gamma(1+h
  \gamma /6))x$ and $S^{\fMALA}(x)$ is constant for $\abs{x} \geq
  R_\pi$.
\item
Since $\mu^{\mO}$ is equivalent at
infinity to $(\rme^{-\gamma h}-2(h\gamma)^2/3 )x$, we observe that  mOMA is geometrically
ergodic if $h \gamma \in
  \ooint{0,1.22}$ by \cite[Theorem 4.1]{RT96},
  and   not geometrically ergodic if $h \gamma >1.23$ by Theorem \cite[Theorem 5.1]{roberts:tweedie:1996}.
\end{enumerate}
\item
For $\beta >2$,
 fMALA and mOMA are  not geometrically ergodic  by Theorem
 \ref{theo:non_geo_erg_Gaussian_MH} since the mean value maps of their proposal kernels
 are equivalent at infinity to $-C_1 \abs{x}^{2 \beta -2}/x$, their variance map to
 $C_2 \abs{x}^{\beta-2}$ for some constants $C_1,C_2 >0$, and the variance maps are bounded from below.
\end{enumerate}

\subsubsection{Geometric ergodicity of bOMA}
In this section, we give some conditions under which bOMA is
geometrically ergodic and some examples of density which satisfy such
conditions.  For a matrix $\Mrm \in \IR^{d \times d}$, we denote $\lambdaMin {\Mrm} =
\min \Sp(\Mrm)$ and $\lambdaMax {\Mrm} = \max \Sp(\Mrm)$, where
$\Sp(\Mrm)$ is the spectrum of $\Mrm$.  We can observe three different
behaviours of the proposal given by \eqref{eq:bO} when $x$ is large,
which are implied by the behaviour of $\lambdaHessianfMin)$ and
$\lambdaHessianfMax$.
\\

If $\liminf_{\norm{x} \to \plusinfty} \lambdaHessianfMin =0$. Then, $g(x) = o(\norm{x}^2)$ as $\|x\|\rightarrow \infty$, and $y^{\bO}$ tends to be as the
MALA proposal at infinity, and we can show that bOMA is geometrically ergodic with the same conditions
introduced in \cite{RT96} for this one.
\begin{example}
  By \cite[Theorem 4.1]{RT96}  bOMA is  geometrically ergodic for $\pi
  \in \Escr(\gamma, \beta)$ with  $\beta \in \coint{1,2}$.
\end{example}

 Now, we focus on the case where $\limsup_{\norm{x} \to \plusinfty} \lambdaHessianfMax <0$. For instance, this condition holds
for $\pi \in \Escr(\gamma,\beta)$ when $\beta \geq 2$.
We give conditions similar to the one for geometric convergence of the Ozaki
discretization, given in \cite{hansen:2003}, to check conditions
of Lemma \ref{propo:convergence_gaussian_proposal}. Although these conditions does not cover all the cases, they seem
to apply to interesting ones. Here are our assumptions where we denote by $\mathbb{S}^{d} = \{x \in \rset^d, \eqsp  \norm{x} =1\}$, the sphere in $\rset^d$ and $\normalX = x / \norm{x}$.

\begin{assumption}
  \label{hyp:bo_contraction}
	We assume:
\begin{enumerate}
  \item
    \label{item:hyp:bo_contraction_limsup}
    $\limsup_{\norm{x}\to \plusinfty} \lambdaHessianfMax <0$;
    \item
   \label{item:hyp:bo_contraction_limThird}
   $\lim_{\norm{\varx} \to \plusinfty} Df(\varx)^{-2} \{ \Id:D^2f(\varx) \} = 0$;
   \item
     \label{item:hyp:bo_contraction_asympt}
$Df(\varx)^{-1} f(\varx)$ is asymptotically homogeneous to $\varx$ when
     $\norm{\varx} \to \plusinfty$, \ie~ there exists a function $c: \mathbb{S}^{d} \to
     \IR$ such that
     \[
    \lim_{\norm{\varx} \to \plusinfty} \norm{\frac{Df(\varx)^{-1}  f(\varx)}{\norm{\varx}} -c(\normalX) \normalX} = 0 \eqsp.
    \]
  \end{enumerate}
  \end{assumption}

  The condition \ref{item:hyp:bo_contraction_limsup} in Assumption~\ref{hyp:bo_contraction} implies that  for all $\varx \in \IR^d$,
  $\lambdaMax{Df(\varx)} \leq M_f$, and garantees that $S^{\bO}(\varx,h)$ is bounded for all
  $\varx \in \IR^d$.

  \begin{lemma}
    \label{lem:bound_Sigma_bO}
Assume Assumptions \ref{hyp:density_convergence} and \ref{hyp:bo_contraction}.   There exists $M_\Sigma \geq 0$ such that for all $\varx \in \IR^d$
   $\norm{S^{\bO}(\varx,h)} \leq M_\Sigma$.
    \end{lemma}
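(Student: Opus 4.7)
The idea is to read off $S^{\bO}(x,h)$ through the scalar functional calculus on $Df(x)$. Specializing \eqref{eq:bO} with $a=1$ and $h$ replaced by $2h$ gives
\[
S^{\bO}(x,h)^{2} \;=\; Df(x)^{-1}\bigl(\rme^{h\,Df(x)}-\Id\bigr) \;+\; \tfrac{1}{3}\,Df(x)^{-1}\bigl(\rme^{-h^{2}Df(x)^{2}}-\Id\bigr),
\]
which, via the power series convention for $\opT_{1},\opT_{2}$, is well defined whether or not $Df(x)$ is invertible. Define the scalar function
\[
\varphi(\lambda) \;=\; \frac{\rme^{h\lambda}-1}{\lambda} \;+\; \frac{1}{3}\cdot\frac{\rme^{-h^{2}\lambda^{2}}-1}{\lambda},
\]
extended by continuity to $\varphi(0)=h$. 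By Remark \ref{rem:diag}, $Df(x)$ is diagonalizable with real spectrum, so by functional calculus $S^{\bO}(x,h)^{2}=\varphi(Df(x))$ and
\[
\bigl\|S^{\bO}(x,h)\bigr\| \;=\; \max_{\lambda\in\operatorname{Sp}(Df(x))}\varphi(\lambda)^{1/2}.
\]
The non-negativity of $\varphi$ on $\IR$ (so that the square root is well defined) is exactly the content of the discussion following Assumption \ref{hyp:definite_positive_covariance_matrix} with $a_{4}=a_{5}=1$.

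Next, I would show that the eigenvalues of $Df(x)$ are uniformly bounded from above over $x\in\IR^{d}$. By Assumption \ref{hyp:bo_contraction}.\ref{item:hyp:bo_contraction_limsup} there exist $R_{0}>0$ and $\delta>0$ such that $\lambda_{\max}(Df(x))\leq -\delta$ for every $\|x\|\geq R_{0}$. Under Assumption \ref{hyp:density_convergence}, $x\mapsto Df(x)$ is continuous, and since the largest eigenvalue of a real diagonalizable symmetric-like matrix depends continuously on the matrix entries, $x\mapsto\lambda_{\max}(Df(x))$ is continuous, hence bounded on the compact ball $\{\|x\|\leq R_{0}\}$ by some $M'<\infty$. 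Setting $M_{f}=\max(M',-\delta)$ gives $\operatorname{Sp}(Df(x))\subset(-\infty,M_{f}]$ for every $x\in\IR^{d}$.

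Finally, I would check that $\sup_{\lambda\leq M_{f}}\varphi(\lambda)<\infty$. This is routine: $\varphi$ is continuous on $\IR$ and hence bounded on the compact interval $[-R,M_{f}]$ for any $R>0$, while as $\lambda\to-\infty$, $(\rme^{h\lambda}-1)/\lambda\to 0$ and $(\rme^{-h^{2}\lambda^{2}}-1)/\lambda\to 0$, so $\varphi(\lambda)\to 0$. Thus $C_{\varphi}:=\sup_{\lambda\leq M_{f}}\varphi(\lambda)<\infty$, and $M_{\Sigma}=\sqrt{C_{\varphi}}$ works. None of these steps is really an obstacle; the only mild subtlety is justifying the functional calculus representation, which is secured by Remark \ref{rem:diag}, and the uniform upper bound on the spectrum of $Df(x)$, which combines the asymptotic Assumption \ref{hyp:bo_contraction}.\ref{item:hyp:bo_contraction_limsup} with continuity on compacts.
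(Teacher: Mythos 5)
Your argument is essentially the paper's own proof, only written out in more detail: both reduce the claim to (i) boundedness of the scalar function $t\mapsto(\rme^{ht}-1)/t+(1/3)(\rme^{-(ht)^2}-1)/t$ on $\ocint{-\infty,M}$ and (ii) a uniform upper bound on $\Sp(Df(x))$ obtained by combining Assumption~\ref{hyp:bo_contraction}-\eqref{item:hyp:bo_contraction_limsup} at infinity with continuity on a compact ball. The only point to state explicitly is that the identity $\norm{S^{\bO}(x,h)}=\max_{\lambda\in\Sp(Df(x))}\varphi(\lambda)^{1/2}$ uses the symmetry of $Df(x)$ (here $\Sigma=\Id$, so $Df=D^2\log\pi$ is symmetric, as the paper notes), not merely its diagonalizability.
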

\begin{proof}
  Since $S^{\bO}(\varx,h)$ is symmetric for all $\varx \in \IR^d$, and $t \mapsto
  (\rme^{ht} - 1)/t + (1/3) (\rme^{-(ht)^2}-1)/t$ is bounded on $\ocint{-\infty, M}$ for
  all $M \in \IR$, we just need to show that there exists $M_f \geq 0$ such that for all
  $\varx$, $\lambdaMax{Df(\varx)} \leq M_f$.  First, by Assumption
  \ref{hyp:bo_contraction}-\eqref{item:hyp:bo_contraction_limsup}, there exists $R
  \geq 0$, such that for all $\varx, \norm{x} \geq R$, $\Sp (Df(\varx))
  \subset \IR_-$. In addition by Assumption \ref{hyp:density_convergence} $\varx \mapsto
  Df(\varx)$ is continuous, and there
  exists $M \geq 0$ such that for all $\varx, \norm{x} \leq R$, $\norm{Df(\varx)} \leq
  M$.
\end{proof}

\begin{theorem}
 \label{propo:sufficient_condition_contraction_oB}
Assume Assumptions \ref{hyp:density_convergence}, \ref{hyp:ratio_convergence_bO} and \ref{hyp:bo_contraction}. If
\begin{equation}
\label{eq:condition_c}
0 < \inf_{n \in \mathbb{S}^{d}} c(n) \leq \sup_{n \in \mathbb{S}^{d}} c(n) < 6/5 \eqsp,
\end{equation}
then bOMA is geometrically ergodic.
\end{theorem}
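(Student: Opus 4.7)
The plan is to apply Lemma~\ref{propo:convergence_gaussian_proposal} to the Gaussian proposal of bOMA with mean map $\mu^{\bO}(\cdot,h)$ and variance map $S^{\bO}(\cdot,h)$ defined in \eqref{eq:bO}. Assumption~\ref{hyp:ratio_convergence_bO} is already in the hypotheses, so I will only need to verify Assumption~\ref{hyp:Gaussian_Markov_kernel} (continuity) together with the two asymptotic bounds on $\mu^{\bO}$ and $S^{\bO}$ required by Lemma~\ref{propo:convergence_gaussian_proposal}. Continuity follows from $g \in C^3(\mathbb{R}^d)$ (Assumption~\ref{hyp:density_convergence}) combined with the real-analyticity of the matrix functionals $\opT_1,\opT_2,\opT_3$ defined in \eqref{eq:operatorOne}--\eqref{eq:operatorThree}. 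The bound $\limsup_{\norm{x}\to\infty}\norm{S^{\bO}(x,h)}/\norm{x} = 0$ is immediate from Lemma~\ref{lem:bound_Sigma_bO}, which provides the uniform estimate $\norm{S^{\bO}(x,h)} \leq M_\Sigma$. The heart of the proof is therefore to establish
\[
\limsup_{\norm{x}\to\infty}\frac{\norm{\mu^{\bO}(x,h)}}{\norm{x}} = \tau \in (0,1).
\]

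For $\norm{x} \geq R$ with $R$ large enough, Assumption~\ref{hyp:bo_contraction}-\ref{item:hyp:bo_contraction_limsup} provides a fixed $\delta>0$ such that $\Sp(Df(x)) \subset (-\infty, -\delta]$. Since $\Sigma = \Id$, the matrix $Df(x) = D^2 g(x)$ is symmetric and hence commutes with every analytic function of itself. Using commutativity I will rewrite
\[
\opT_1(Df,h,1)f(x) + \tfrac{2}{3}\opT_2(Df,h,1)f(x) = \Phi(Df(x),h)\,Df(x)^{-1}f(x),
\]
with $\Phi(M,h) = (e^{hM/2} - \Id) + \tfrac{2}{3}(e^{-h^2 M^2/4} - \Id)$. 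By Assumption~\ref{hyp:bo_contraction}-\ref{item:hyp:bo_contraction_asympt}, $Df(x)^{-1}f(x) = c(\normalX)\,x + o(\norm{x})$, while a similar expansion of the $\opT_3$-contribution together with Assumption~\ref{hyp:bo_contraction}-\ref{item:hyp:bo_contraction_limThird} shows that this term is $o(\norm{x})$. Collecting these pieces yields
\[
\mu^{\bO}(x,h) = [\Id + c(\normalX)\,\Phi(Df(x),h)]\,x + o(\norm{x}).
\]

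It will then remain to bound the operator norm of $\Id + c(\normalX)\Phi(Df(x),h)$ uniformly below $1$. Being an analytic function of the symmetric matrix $Df(x)$, this operator is itself symmetric with spectrum $\{1 + c(\normalX)\,\phi(\lambda,h) : \lambda \in \Sp(Df(x))\}$, where $\phi(\lambda,h) = (e^{h\lambda/2}-1) + \tfrac{2}{3}(e^{-h^2\lambda^2/4}-1)$ lies in $(-5/3, 0)$ for $\lambda \leq -\delta$ and approaches the extremum $-5/3$ only in the limit $\lambda\to -\infty$. Condition \eqref{eq:condition_c}, together with continuity of $c$ on the compact sphere $\mathbb{S}^{d}$, then produces $\tau \in (0,1)$ such that $|1 + c(n)\phi(\lambda,h)| \leq \tau$ uniformly in $n \in \mathbb{S}^{d}$ and $\lambda \leq -\delta$: the strict inequality $\sup_n c(n) < 6/5$ keeps the eigenvalue strictly above $-1$ even in the limit $\lambda\to -\infty$, while $\inf_n c(n) > 0$ keeps it strictly below $1$. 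The hard part will be precisely this uniformity argument, because $\Sp(Df(x))$ may itself escape to $-\infty$: one must combine the pointwise estimate on $\phi$ with the global bound $\norm{\Phi(M,h)}_{\mathrm{op}} \leq 5/3$ valid whenever $\Sp(M) \subset (-\infty,-\delta]$. Once $\tau$ is secured, Lemma~\ref{propo:convergence_gaussian_proposal} delivers $V$-geometric ergodicity with $V(x) = 1+\norm{x}^2$.
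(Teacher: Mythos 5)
Your proof is correct and follows essentially the same route as the paper: reduce to Lemma~\ref{propo:convergence_gaussian_proposal} using Lemma~\ref{lem:bound_Sigma_bO} for the variance map, write $\mu^{\bO}(x,h)-x = \Bscr(x)Df(x)^{-1}f(x) + o(\norm{x})$ with $\Bscr$ the bounded analytic function of the symmetric matrix $Df$, and exploit the spectral bound $-5/3<\phi(\lambda,h)<0$ together with \eqref{eq:condition_c} to keep the relevant eigenvalues of the contraction uniformly inside $(-1,1)$. The only cosmetic difference is that you bound $\norm{\Id+c(\normalX)\Bscr(x)}$ directly as an operator norm, whereas the paper passes through the equivalent inner-product criterion $\limsup \ps{\eta/\norm{x}}{\eta/\norm{x}+2\normalX}<0$ of \cite[Lemma 3.4]{hansen:2003} and expands $\sum_i c\lambda_i \ps{e_i}{\normalX}^2(c\lambda_i+2)$ in an eigenbasis of $\Bscr$ --- the two computations coincide since $\abs{1+c\lambda}^2-1=c\lambda(c\lambda+2)$.
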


\begin{proof}
We check that the conditions of Lemma \ref{propo:convergence_gaussian_proposal} hold. By Assumption \ref{hyp:density_convergence} and \eqref{eq:bO}, Assumption \ref{hyp:Gaussian_Markov_kernel} holds, thus it remains to check \eqref{eq:hyp:gaussian_prop_mu}.
First, Lemma
\ref{lem:bound_Sigma_bO} implies that the second equality of \eqref{eq:hyp:gaussian_prop_mu} is satisfied, and we just need to prove the first equality. By \cite[Lemma 3.4]{hansen:2003}, it suffices  to prove that
\begin{equation}
  \label{eq:proof:propo:sufficient_condition_contraction_oB_1}
\limsup_{\norm{\varx} \to \plusinfty} \ps{\frac{\eta(\varx)}{\norm{\varx}}}{\frac{\eta(\varx)}{\norm{\varx}} + 2
  \normalX} < 0 \eqsp,
\end{equation}
where $\eta(x) = \mu^{\bO}(\varx,h) - \varx$.
Since $\limsup_{\norm{x} \to \plusinfty} \lambdaHessianfMax <0$ we can write $\Gscr(\varx)=\Bscr(\varx)Df(\varx)^{-1}  f (\varx)$, where
%\begin{align*}
%\Gscr(\varx)
%&=
%\bracket{\opT_1(Df(\varx) ,h,1) + \opT_2(Df(\varx) ,h,1)} f(\varx) \\
%&= \Bscr(\varx)Df(\varx)^{-1}  f (\varx) \eqsp,
%\end{align*}
\begin{equation*}
\Bscr(\varx) =(\rme^{(h/2)  Df(\varx)} - \Id) + (2/3)(\rme^{-(h Df(\varx)  /2)^2}- \Id) \eqsp,
\end{equation*}
and $x \mapsto \Bscr(\varx)$ is bounded on $\rset^d$.  Since $\Bscr$ is bounded on $\rset^d$, by Assumption
\ref{hyp:bo_contraction}-\eqref{item:hyp:bo_contraction_limThird}-\eqref{item:hyp:bo_contraction_asympt}
and (\ref{eq:condition_c}),
\begin{equation}
  \label{eq:proof:propo:sufficient_condition_contraction_oB_2}
\lim_{\norm{x} \to \plusinfty} \abs{\ps{\frac{\eta(\varx)}{\norm{\varx}}}{\frac{\eta(\varx)}{\norm{\varx}} + 2
  \normalX} - \norm{\Bscr(\varx)\normalX}^2 c(\normalX)^2 + 2 \ps{\Bscr(\varx)\normalX}{\normalX} c(\normalX)
} = 0  \eqsp.
\end{equation}
In addition, if we denote  the eigenvalues of $\Bscr(\varx)$  by $\{\lambda_i(\varx) , \eqsp i =1, \dots, d\}$ and $\{e_i(\varx) , \eqsp i =1, \dots, d\}$ an
orthonormal basis of eigenvectors, we have
\begin{multline}
\label{eq:proof_sufficent_condition_bO}
\norm{\Bscr(\varx)\normalX}^2 c(\normalX)^2 + 2 \ps{\Bscr(\varx)\normalX}{\normalX}
c(\normalX)
%\\
%\nonumber
%&
%= \sum_{i =1} ^d
%\lambda_i(\varx)^2 \ps{e_i(\varx)} {\normalX}^2 c(\normalX)^2 + 2
%\lambda_i(\varx)\ps{e_i(\varx)} {\normalX}^2 c(\normalX) \\
%\label{eq:proof_sufficent_condition_bO}
\\=  \sum_{i =1} ^d c(\normalX)\lambda_i(\varx) \ps{e_i(\varx)} {\normalX}^2
(  c(\normalX)\lambda_i(\varx)  + 2 )
\end{multline}
Since $\limsup_{\norm{x} \to \plusinfty} Df(x) < 0$, for all $i$ and $\norm{x}$ large
enough, $\lambda_i(\varx) \in \coint{-5/3,0}$. Therefore using
\eqref{eq:condition_c} we get from \eqref{eq:proof_sufficent_condition_bO}:
\[
\norm{\Bscr(\varx)\normalX}^2 c(\normalX)^2 + 2 \ps{\Bscr(\varx)\normalX}{\normalX} c(\normalX) < 0 \eqsp.
\]
The proof is concluded using this result in \eqref{eq:proof:propo:sufficient_condition_contraction_oB_2}.
\end{proof}

% \begin{corollary}
% \label{coro:geo_convergence_bO}
% Assume Assumption \ref{hyp:density_convergence}-\ref{hyp:bo_contraction} and there exists $a >0$ such that  then bOMA is geometrically ergodic.
% \end{corollary}

% \begin{proof}
% It is just a consequence of the Propositions \ref{propo:convergence_gaussian_proposal} and  \ref{propo:sufficient_condition_contraction_oB}.
% \end{proof}

\subsubsection*{Application to the convergence of bOMA for $\pi \in \Pscr_m^+$}
For the proof of the main result of this section, we need the following lemma.
\begin{lemma}[\protect{\cite[Proof of Theorem 4.10]{hansen:2003}}]
  \label{lem:hansen_asympt_poly_homo}
Let $\pi \in \Pscr_m ^+$ for $m \geq 2$, then
  $\pi$ satisfies Assumption~\ref{hyp:bo_contraction}-\eqref{item:hyp:bo_contraction_asympt} with
  $c(\nArrow) = 1/(m-1) \in \ooint{0,6/5}$ for all $\nArrow \in \mathbb{S}^{d}$.
\end{lemma}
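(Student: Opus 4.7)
The plan is to compute $Df(\varx)^{-1} f(\varx)$ asymptotically by treating $\prm$ as the dominant part and $\rrm$ as a perturbation, and then use Euler-type homogeneity identities on $\prm$ to extract the leading term $\varx/(m-1)$. Recall that, on $\{\norm{\varx}\geq R_\pi\}$, one has $f(\varx)=-\nabla\qrm(\varx)=-\nabla\prm(\varx)-\nabla\rrm(\varx)$ and $Df(\varx)=-\nabla^2\prm(\varx)-\nabla^2\rrm(\varx)$. The crucial algebraic input is: differentiating Euler's identity $\ps{\nabla\prm(\varx)}{\varx}=m\,\prm(\varx)$ in $\varx$ gives \eqref{eq:hessian_ps1_poly_homog}, namely $\nabla^2\prm(\varx)\varx=(m-1)\nabla\prm(\varx)$. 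Since $\pi\in\Pscr_m^+$ and $\nabla^2\prm(\varx)$ is positive definite for $\varx\neq 0$, this yields the identity $(\nabla^2\prm(\varx))^{-1}\nabla\prm(\varx)=\varx/(m-1)$, which is exactly the candidate leading term, so the remaining work is purely to estimate errors.

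Next I would quantify the sizes of the different terms. By \eqref{eq:hessian_poly_homog} we have $\nabla^2\prm(\varx)=\norm{\varx}^{m-2}\nabla^2\prm(\normalX)$, and by continuity on the compact sphere $\mathbb{S}^{d}$ combined with positive definiteness, there is $c_0>0$ with $\lambda_{\min}(\nabla^2\prm(\normalX))\geq c_0$ uniformly in $\normalX$; hence $\norm{(\nabla^2\prm(\varx))^{-1}}\leq c_0^{-1}\norm{\varx}^{2-m}$. For $\rrm$, integrating the hypothesis \eqref{eq:petit_to_r_poly_homog} (i.e.\ $\norm{D^2(\nabla\rrm)(\varx)}=o(\norm{\varx}^{m-3})$) twice along a ray from the origin gives $\norm{\nabla^2\rrm(\varx)}=o(\norm{\varx}^{m-2})$ and $\norm{\nabla\rrm(\varx)}=o(\norm{\varx}^{m-1})$, so $\rrm$ is strictly subdominant to $\prm$ in both the Jacobian and the drift.

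Finally I would combine these via a Neumann expansion: writing $Df(\varx)=-\nabla^2\prm(\varx)\bigl[\Id+(\nabla^2\prm(\varx))^{-1}\nabla^2\rrm(\varx)\bigr]$, the bracket tends to $\Id$ in operator norm because $\norm{(\nabla^2\prm(\varx))^{-1}\nabla^2\rrm(\varx)}\leq c_0^{-1}\norm{\varx}^{2-m}\cdot o(\norm{\varx}^{m-2})=o(1)$, so $Df(\varx)$ is invertible for $\norm{\varx}$ large and $Df(\varx)^{-1}=-(\nabla^2\prm(\varx))^{-1}+o(\norm{\varx}^{2-m})$ in operator norm. Applying this to $f(\varx)=-\nabla\prm(\varx)+O(o(\norm{\varx}^{m-1}))$ and using the identity above, the dominant contribution is $(\nabla^2\prm(\varx))^{-1}\nabla\prm(\varx)=\varx/(m-1)$, while every other term is $o(\norm{\varx})$ (since $\norm{(\nabla^2\prm(\varx))^{-1}}\,\norm{\nabla\rrm(\varx)}=o(\norm{\varx})$ and the Neumann correction applied to $\nabla\prm(\varx)$ is of order $o(1)\cdot\norm{\varx}/(m-1)=o(\norm{\varx})$). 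Dividing by $\norm{\varx}$ and letting $\norm{\varx}\to\infty$ gives $c(\normalX)=1/(m-1)$, which lies in $(0,6/5)$ for every $m\geq 2$. The main obstacle is the bookkeeping of the $o$-estimates on $\nabla\rrm$ and $\nabla^2\rrm$ in the borderline case $m=2$ where the naive integration of $\norm{D^2(\nabla\rrm)(\varx)}=o(\norm{\varx}^{-1})$ is delicate; we appeal to the argument of \cite{hansen:2003} to cover this case.
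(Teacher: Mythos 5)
The paper does not actually prove this lemma---it is imported verbatim from the proof of Theorem~4.10 in \cite{hansen:2003}---so your self-contained argument is a genuinely different route, and for $m\geq 3$ it is correct and complete: the differentiated Euler identity $\nabla^2\prm(\varx)\varx=(m-1)\nabla\prm(\varx)$, the uniform bound $\lambda_{\min}(\nabla^2\prm(\normalX))\geq c_0>0$ on the compact sphere, the integrated bounds $\norm{\nabla^2\rrm(\varx)}=o(\norm{\varx}^{m-2})$ and $\norm{\nabla\rrm(\varx)}=o(\norm{\varx}^{m-1})$, and the Neumann expansion all go through; the signs cancel correctly in $Df(\varx)^{-1}f(\varx)=(\nabla^2\qrm(\varx))^{-1}\nabla\qrm(\varx)$, and $1/(m-1)\in\ooint{0,6/5}$ for all $m\geq 2$.

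The case $m=2$, however, is a genuine gap and not mere bookkeeping. There the hypothesis \eqref{eq:petit_to_r_poly_homog} reads $\norm{D^2(\nabla\rrm)(\varx)}=o(\norm{\varx}^{-1})$, and integrating once along a ray only yields $\norm{\nabla^2\rrm(\varx)}=O(1)+o(\log\norm{\varx})$ --- for instance $\norm{D^2(\nabla \rrm)}\sim 1/(t\log t)$ along rays produces $\nabla^2\rrm\sim\log\log t$, which is unbounded and hence not $o(\norm{\varx}^{m-2})=o(1)$; the second integration then fails to give $\nabla\rrm=o(\norm{\varx})$ as well. Since for $m=2$ the leading Hessian $\nabla^2\prm$ is a constant positive definite matrix, an unbounded $\nabla^2\rrm$ would dominate it and the Neumann step collapses, so the conclusion cannot be extracted from \eqref{eq:petit_to_r_poly_homog} alone by your method; deferring to \cite{hansen:2003} at exactly this point is therefore doing real work rather than covering a technicality (note that the paper's own Proposition~\ref{pro:same} likewise singles out $m=2$ and handles it by an explicit computation with $\prm(\varx)=\ps{\Arm\varx}{\varx}$). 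To make your proof self-contained for all $m\geq 2$, either restrict the integration argument to $m\geq 3$ and treat $m=2$ separately, or strengthen the hypotheses to include $\norm{\nabla^2\rrm(\varx)}=o(\norm{\varx}^{m-2})$ and $\norm{\nabla\rrm(\varx)}=o(\norm{\varx}^{m-1})$ directly, after which the rest of your argument applies unchanged.
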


\begin{proposition} \label{pro:same}
Let $\pi \in \Pscr_m^+$ for $m \geq 2$, then bOMA is $V$-geometrically ergodic, with $V(x)
=\norm{x}^2+1$.
\end{proposition}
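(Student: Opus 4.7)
My plan is to verify all three hypotheses of Theorem~\ref{propo:sufficient_condition_contraction_oB} for $\pi \in \Pscr_m^+$ with $m \geq 2$, and then invoke it. Assumption~\ref{hyp:density_convergence} is immediate from the definition of $\Pscr_m^+$: $-\log \pi$ coincides up to a constant with $\prm + \rrm$, and both $\prm$ (a polynomial) and $\rrm$ (by hypothesis) belong to $C^3(\rset^d)$. Lemma~\ref{lem:hansen_asympt_poly_homo} simultaneously provides Assumption~\ref{hyp:bo_contraction}-\eqref{item:hyp:bo_contraction_asympt} and the range condition~\eqref{eq:condition_c}, since $c(\normalX) = 1/(m-1) \in \ooint{0,6/5}$ whenever $m \geq 2$. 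What remains is to verify Assumption~\ref{hyp:bo_contraction}-\eqref{item:hyp:bo_contraction_limsup}-\eqref{item:hyp:bo_contraction_limThird} and Assumption~\ref{hyp:ratio_convergence_bO}.

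For the two remaining parts of Assumption~\ref{hyp:bo_contraction}, the argument is a scaling computation combining the homogeneity identities~\eqref{eq:hessian_poly_homog}--\eqref{eq:third_derivatives_poly_homog} with the remainder estimate~\eqref{eq:petit_to_r_poly_homog}. Since $\pi \in \Pscr_m^+$, $\nabla^2 \prm(\normalX)$ is uniformly positive definite on the unit sphere, so there exists $\lambda_0 > 0$ with $\lambda_{\min}(\nabla^2 \prm(x)) \geq \lambda_0 \norm{x}^{m-2}$. The bound~\eqref{eq:petit_to_r_poly_homog}, integrated, controls the Hessian of $\rrm$ as a subdominant perturbation, so $\lambdaMax{Df(x)} = -\lambda_{\min}(\nabla^2 \qrm(x))$ is eventually negative (and diverges to $-\infty$ when $m > 2$), which is~\eqref{item:hyp:bo_contraction_limsup}. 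Similarly $\norm{Df(x)^{-1}} = O(\norm{x}^{-(m-2)})$ and $\norm{D^2 f(x)} = O(\norm{x}^{m-3})$ combine into $Df(x)^{-2}\{\Id : D^2 f(x)\} = O(\norm{x}^{-(m+1)}) \to 0$, which is~\eqref{item:hyp:bo_contraction_limThird}.

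The principal obstacle is Assumption~\ref{hyp:ratio_convergence_bO}: the mean acceptance probability must not degenerate at infinity. I plan to follow the strategy of~\cite{hansen:2003} for the Ozaki proposal. Once~\eqref{item:hyp:bo_contraction_limsup} is in place, the eigenvalues of $hDf(x)/2$ are uniformly bounded above by a strictly negative constant (or diverge to $-\infty$), so the matrix-valued functionals $\opT_1, \opT_2, \opT_3$ evaluated at $Df(x)$ remain uniformly bounded; together with Lemma~\ref{lem:bound_Sigma_bO}, this keeps $S^{\bO}(x,h)$ uniformly bounded and confines $\mu^{\bO}(x,h)$ to a bounded neighbourhood of a contractive image of $x$. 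A Taylor expansion of $\log\bigl(\pi(y)q(y,x)/(\pi(x)q(x,y))\bigr)$ in the Gaussian increment $\xi$, carried out uniformly for $\norm{x}$ large, should then show that this log-ratio is $O(1)$ on a set of $\xi$ of probability bounded below; the high-order cancellations that give bOMA its fifth-order character (the algebra recorded in the supplementary Mathematica file) are precisely what prevents the expansion from blowing up at infinity. This produces $\liminf_{\norm{x}\to\infty}\int \alpha(x,y)q(x,y)\rmd y > 0$, and the $V$-geometric ergodicity with $V(x) = 1 + \norm{x}^2$ then follows from Theorem~\ref{propo:sufficient_condition_contraction_oB}.
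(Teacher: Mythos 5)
Your overall architecture is the paper's: reduce everything to Theorem~\ref{propo:sufficient_condition_contraction_oB}, get Assumption~\ref{hyp:bo_contraction}-\eqref{item:hyp:bo_contraction_asympt} and \eqref{eq:condition_c} from Lemma~\ref{lem:hansen_asympt_poly_homo}, and get \eqref{item:hyp:bo_contraction_limsup}--\eqref{item:hyp:bo_contraction_limThird} from the homogeneity identities together with \eqref{eq:petit_to_r_poly_homog}. Those parts are fine and match the paper.

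The gap is in your treatment of Assumption~\ref{hyp:ratio_convergence_bO}, which is where essentially all the work lies. Your claim that a uniform Taylor expansion in $\xi$ shows the log acceptance ratio is $O(1)$ on a set of positive probability, because ``the high-order cancellations that give bOMA its fifth-order character prevent the expansion from blowing up at infinity,'' is not a proof and, more importantly, points at the wrong mechanism. The fifth-order cancellations are cancellations in powers of $h^{1/2}$ at fixed $x$; they say nothing about the regime of fixed $h$ and $\norm{x}\to\plusinfty$, which is what Assumption~\ref{hyp:ratio_convergence_bO} concerns. In that regime the individual terms of the log-ratio \eqref{eq:ratio_expression_poly_homo} (e.g.\ $\prm(\mu(\varx)+S(\varx)\GaussianRV)$, $\prm(\varx)$, and the quadratic form in $S^{-2}$) each grow like $\norm{\varx}^m$, and whether the acceptance probability degenerates is decided by the \emph{sign} of the coefficient of the surviving $\norm{\varx}^m$ term after these are combined. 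The paper computes this coefficient explicitly: for $m\geq 3$ it is $\prm(\normalX)\Psi(m)$ with an explicit function $\Psi$ that must be checked to be positive on $\coint{3,\plusinfty}$, and the case $m=2$ requires a separate computation (the Jacobian $Df$ no longer diverges, so the exponential functionals $\opT_i$ do not linearise) leading to the positivity of $\Xi(h,\lambda)$. The log-ratio then tends to $+\infty$, not to an $O(1)$ quantity, and Fatou's lemma finishes. That this positivity is not automatic is exactly the content of the paper's negative results: for fMALA and mOMA with $\beta>2$ the analogous coefficient has the wrong sign and the algorithms fail to be geometrically ergodic. Without carrying out the expansion of \eqref{eq:ratio_expression_poly_homo} and verifying the sign condition (in both cases $m\geq3$ and $m=2$), your argument does not establish Assumption~\ref{hyp:ratio_convergence_bO}.
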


\begin{proof}
  Let us denote $\pi \propto \exp(-\prm(\varx) - \rrm(\varx))$, with $\prm$ and $\rrm$ satisfying
  the conditions from the definition in Section \ref{sec:poly_homo}.  We prove that if $\pi
  \in \Pscr_m^+$, Theorem \ref{propo:sufficient_condition_contraction_oB} can be applied.
    %We check that Assumption  \ref{hyp:density_convergence}, \ref{hyp:ratio_convergence_bO}
%  and \ref{hyp:bo_contraction} are satisfied with $0 < \inf_{\nArrow \in
%    \mathbb{S}^{d}} c(\nArrow) \leq \sup_{\nArrow \in \mathbb{S}^{d}} c(\nArrow) < 6/5$.
%
    First,  by definition of $\Pscr_m^+$, Assumption \ref{hyp:density_convergence} is satisfied.
 Furthermore, Assumption  \ref{hyp:bo_contraction}-\eqref{item:hyp:bo_contraction_limsup}-\eqref{item:hyp:bo_contraction_limThird} follows from  \eqref{eq:petit_to_r_poly_homog}, \eqref{eq:hessian_poly_homog},
  \eqref{eq:third_derivatives_poly_homog} and the condition that $\nabla^2 \prm (\nArrow)$ is
  positive definite for all $\nArrow \in \mathbb{S}^{d}$. Also by Lemma \ref{lem:hansen_asympt_poly_homo},  Assumption
  \ref{hyp:bo_contraction}-\eqref{item:hyp:bo_contraction_asympt} is satisfied.\\
  Now we focus on Assumption \ref{hyp:ratio_convergence_bO}.
  For ease of notation, in the following  we denote  $\mu^{\bO}$ and $S^{\bO}$ by $\mu$ and $S$ , and do not mention  the dependence in the parameter $h$ of $\mu$ and $S$ when it does not play any role. Note that
  \begin{equation}
  \label{eq:expression_ratio_accept}
  \int_{\IR^d} \alpha( \varx , \vary) q(\varx,\vary) \rmd \vary = (2 \uppi)^{-d/2}\int_{\IR^d} \defSet{1 \wedge
  \exp \widetilde{\alpha}(\varx,\GaussianRV) }\exp(-\norm{\GaussianRV}^2/2) \rmd \GaussianRV \eqsp,
  \end{equation}
  where
\begin{align}
    \nonumber
    \widetilde{\alpha}(\varx,\GaussianRV) &= -\prm(\mu(\varx)
  + S(\varx)\GaussianRV) + \prm(\varx)-\rrm(\mu(\varx) \\
  \nonumber
  &+ S(\varx)\GaussianRV) + \rrm(\varx)-
  \log(\abs{S(\mu(\varx) + S(\varx)\GaussianRV) })  +  \log(\abs{S(\varx) }) + (1/2) \norm{\GaussianRV}^2 \\
    \label{eq:ratio_expression_poly_homo}
  & -(1/2) \ps{(\widetilde{S}(x, \GaussianRV))^{-1} \{ \varx - \mu(\mu(\varx) +
      S(\varx)\GaussianRV) \}}{\varx - \mu \left(\mu(\varx) +
      S(\varx)\GaussianRV\right) } \eqsp,
\end{align}
and $\widetilde{S}(x,\GaussianRV) =S(\mu(\varx) + S(\varx)\GaussianRV) S(\mu(\varx) + S(\varx)\GaussianRV)^T$.
First, we consider $m \geq 3$, then we have the following estimate of the terms in
\eqref{eq:ratio_expression_poly_homo} by
\eqref{eq:petit_to_r_poly_homog}-\eqref{eq:third_derivatives_poly_homog} and Lemma \ref{lem:hansen_asympt_poly_homo}:
\begin{align}
\label{eq:proof_poly_homo_mu_mgeq3}
\mu(\varw) &\underset{\norm{w} \to \plusinfty}{=} \{1-5/(3(m-1))\}w + o(\norm{\varw}) \\
\label{eq:proof_poly_homo_sigma_mgeq3}
(S(\varw)S(\varw)^T)^{-1} &
%\underset{\norm{w} \to \plusinfty}{=} (3/4)\nabla^2 \prm (\varw) + o(\norm{\varw}^{m-2})
 \underset{\norm{w} \to \plusinfty}{=}\frac{3}{4}
  m(m-1)\norm{\varw}^{m-2} \nabla^2 \prm (\nArrow_{\varw}) + o(\norm{\varw}^{m-2})
\\
\label{eq:proof_poly_homo_log_mgeq3}
\log(\abs{S(\varw)}) &\underset{\norm{w} \to \plusinfty}{=} o(\norm{\varw})
\end{align}
Then by \eqref{eq:proof_poly_homo_mu_mgeq3}-\eqref{eq:proof_poly_homo_log_mgeq3}, if we define $\Psi: \coint{3,\plusinfty} \rightarrow \IR$ by
\begin{multline*}
m \mapsto 1-\defSet{1-\frac{5}{3(m-1)}}^m  \\ - (3/8)m(m-1) \defSet{
1-\bracket{1-\frac{5}{3(m-1)}}^2 }^2 \defSet{1-\frac{5}{3(m-1)} }^{m-2}  \eqsp,
\end{multline*}
we get
\begin{equation*}
\widetilde{\alpha}(\varx,\GaussianRV) \underset{\norm{x} \to \plusinfty}{=} \norm{\varx}^m
\prm(\normalX) \Psi(m)
+ o(\norm{\varx}^m) \eqsp.
\end{equation*}
%By Lemma~\ref{lem:fun_positive_ergodicity1}, 
Since $\Psi$ is positive on $\coint{3,\plusinfty}$, for all $\GaussianRV \in \rset^d$ $\lim_{\norm{\varx} \to \plusinfty} \widetilde{\alpha}(\varx
,\GaussianRV) = \plusinfty$. This result, \eqref{eq:expression_ratio_accept} and  Fatou's Lemma  imply that Assumption \ref{hyp:ratio_convergence_bO} is
satisfied.\\
For $m = 2$, we can assume $\prm(\varx) = \ps{\Arm \varx}{\varx}$ with $\Arm \in \mathcal{S}^*_+(\rset^d)$. Let us denote
for $\Mrm$ an invertible matrix of dimension $p \geq 1$,
\begin{align*}
\varrho(\Mrm) &= (\rme^{-  \Mrm} - \operatorname{I}_p) + (2/3)(\rme^{- \Mrm^2}- \operatorname{I}_p) \\
\varsigma(\Mrm) &= (\rme^{-2 \Mrm }-\operatorname{I}_p) +(1/3)(\rme^{-4 \Mrm ^2}-\operatorname{I}_p) \eqsp.
\end{align*}
Then we have the following estimates: 
\begin{align}
%  \nonumber
%\mu(\varx,h) &\underset{\norm{x} \to \plusinfty}{=} (\Id + \varrho(h\Arm))\varx + o(\norm{\varx}) \eqsp,
%S(\varx,h) \underset{\norm{x} \to \plusinfty}{=} -2 \Arm \varsigma(h\Arm)^{-1} + o(1) \eqsp,\\
  \nonumber
\widetilde{\alpha}(\varx,\GaussianRV)& \underset{\norm{x} \to \plusinfty}{=} \ps{\Arm(\varsigma(h\Arm))^{-1}\defSet{\bracket{2 \varrho(h\Arm) + \varrho(h\Arm)^2}\varx}}
{\bracket{2 \varrho(h\Arm) + \varrho(h\Arm)^2}\varx}\\
\label{eq:proof_poly_homog_log_ratio_case_quad}
& +\ps{ \Arm \varx}{\varx} -\ps{
\Arm \defSet{ (\Id + \varrho(h\Arm))\varx }}{(\Id + \varrho(h\Arm))\varx}   + o(\norm{\varx}^2)
\end{align}
If we denote the eigenvalues of $\Arm$ by  $\{\lambda_i, i = 1 \dots d \}$ and $\{x_i , i = 1 , \dots, d \}$ the coordinates of $\varx$ in
an orthonormal  basis  of eigenvectors for  $\Arm$, \eqref{eq:proof_poly_homog_log_ratio_case_quad}
becomes
\begin{align}
  \label{eq:proof_poly_homog_log_ratio_case_quad2}
&\widetilde{\alpha}(\varx,\GaussianRV) \underset{\norm{x} \to \plusinfty}{=}   \sum_{i=1} ^d  \Xi(h,\lambda_i)  x_i^2   + o(\norm{\varx}^2) \eqsp.
\end{align}
where for $h, \lambda >0$,
\[
\Xi(h, \lambda) =  \lambda \bracket{ 1 - (\varrho(h \lambda)+1)^2 +
  \varsigma(h \lambda)^{-1}\bracket{4\varrho(h \lambda)^2 + 4 \varrho(h \lambda)^3 +
    \varrho(h \lambda)^4} }\eqsp.
\]
%By Lemma~\ref{lem:fun_positive_ergodicity2} for all $h,\lambda >0$,
%$ \Xi(h, \lambda) > 0$.
%Combining this result and \eqref{eq:proof_poly_homog_log_ratio_case_quad2} we have for all $\GaussianRV \in \rset^d$,
Using that for any $h,\lambda >0$, $ \Xi(h, \lambda) > 0$ and \eqref{eq:proof_poly_homog_log_ratio_case_quad2}, we have for all $\GaussianRV \in \rset^d$,
$\lim_{\norm{x} \to \plusinfty} \widetilde{\alpha}(\varx,\GaussianRV) =
\plusinfty$, and as in the first case Assumption \ref{hyp:ratio_convergence_bO} is satisfied.
\end{proof}

\begin{remark}
  Using the same reasoning as in Proposition \ref{pro:same}, one can show that bOMA is geometrically ergodic for $\pi \in \Escr(\beta, \gamma)$ with $\beta \geq 2$.
% \begin{enumerate}[-]
% \item
% \item Note that in fact, at infinity the proposal of  bOMA looks like the one of Riemann manifold Metropolis adjusted Langevin algorithm (RMALA) proposed in \cite{girolami:calderhead:2011}, with different coefficients in front the different term of the mean value map and the variance map. It appears that the previous calculations imply that RMALA is also geometrically ergodic for $\pi \in \Pscr_m^+$ for sufficiently small scale parameter.
%\end{enumerate}
\end{remark}

We now summarise the behaviour for all the different algorithms for the one dimensional class  $\Escr(\beta,\gamma)$ in Table  \ref{tb:summary_2}
\begin{table}[h!]
\begin{center}    \begin{tabular}{| p{2.5cm} | p{2.5cm} | p{2.8cm}|p{2.5cm} | }
    \hline
     Method &  $\beta \in [1,2)$ &  $\beta=2 $ &  $\beta>2$ \\   \hline \hline
 fMALA  \eqref{eq:fMALA}   & geometrically
 ergodic & geometrically ergodic or not & not geometrically ergodic\\  \hline
 mOMA \eqref{eq:mO} 	    &   geometrically ergodic & geometrically ergodic or not  & not geometrically ergodic\\  \hline
 bOMA \eqref{eq:bO}   &  geometrically ergodic & geometrically ergodic & geometrically ergodic\\  \hline
    \end{tabular}
    \end{center}
  \caption{Summary of ergodicity results for the Metropolis-Hastings algorithms  for the class $\Escr(\beta,\gamma)$}
\label{tb:summary_2}
\end{table}

\subsection{Convergence of Gaussian Markov kernel on $\rset$}
\label{subsec:convergence_unadjusted}
We now present precise results for the ergodicity of the unadjusted proposals, by  extending the results of \cite{RT96} for the ULA to Gaussian Markov kernels on $\rset$.
Under Assumption~\ref{hyp:Gaussian_Markov_kernel}, it is straightforward to see that $Q$ is
$\Leb^d$-irreducible, where $\Leb^d$ is the Lebesgue measure,
aperiodic and all compact set of $\rset^d$ are small; see
\cite[Theorem 3.1]{hansen:2003}.
We  now state our main theorems, which essentially complete \cite[Theorem
3.1-3.2]{RT96}. Since their proof are very similar, they are omitted.
\begin{theorem}
\label{theo_convergence_Gaussian_Markov_kernel_S_bounded}
Assume Assumption \ref{hyp:Gaussian_Markov_kernel}, and there exist $s_\wedge,u_+,u_- \in \rset_+^*$ and $\exponentTheo \in \rset$ such that:
\begin{equation*}
\limsup_{\abs{x} \to \plusinfty} S(x) \leq s_\wedge \eqsp,
 \end{equation*}
 \begin{equation*}
  \lim_{x \to \plusinfty} \defSet{\mu(x) - x}x^{-\exponentTheo} = -u_+ \eqsp, \text{ and }
\lim_{x \to -\infty} \defSet{\mu(x) - x}\abs{x}^{-\exponentTheo} = u_- \eqsp.
\end{equation*}
\begin{enumerate}[(1)]
\item
\label{item:theo_convergence_Gaussian_Markov_kernel_S_bounded_1}
If $\exponentTheo \in \coint{0,1}$, then $Q$ is geometrically ergodic.
\item
\label{item:theo_convergence_Gaussian_Markov_kernel_S_bounded_2}
If $\exponentTheo =1$ and $(1-u_+)(1-u_-) <1$, then $Q$ is geometrically ergodic.
\item
\label{item:theo_convergence_Gaussian_Markov_kernel_S_bounded_3}
If $\exponentTheo \in \ooint{-1,0}$, then $Q$ is ergodic but not geometrically ergodic.
\end{enumerate}
\end{theorem}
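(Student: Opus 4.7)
The proof follows the template of \cite[Theorems~3.1 and 3.2]{RT96} for the ULA. Since Assumption~\ref{hyp:Gaussian_Markov_kernel} already guarantees (as noted just before the theorem) that $Q$ is $\Leb$-irreducible, aperiodic and all compact sets are small, parts (1) and (2) reduce by \cite[Theorem~15.0.1]{meyn:tweedie:2009} to producing a geometric drift $QV \le \lambda V + b\1_\smallSet$ with $\lambda<1$ and $V\ge 1$, whereas (3) amounts to ruling out any such $V$ and producing a weaker (non-geometric) Foster-Lyapunov drift that still yields ordinary ergodicity.

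For parts (1) and (2) my plan is to take the asymmetric exponential Lyapunov function $V(x)=\exp(\alpha x^+ + \beta x^-)$, with $\alpha,\beta>0$ to be tuned. The bound $|Y|\le |\mu(x)|+S(x)|Z|$ with $Z\sim\mathcal{N}(0,1)$ together with the Gaussian moment generating function yields
\[
QV(x)\le 2\exp\!\left(\alpha\mu(x)^+ + \beta\mu(x)^- + \tfrac12(\alpha\vee\beta)^2 S(x)^2\right),
\]
with the variance term eventually bounded by $(\alpha\vee\beta)^2 s_\wedge^2/2+o(1)$. For $\exponentTheo\in(0,1)$ the drift $\mu(x)-x\sim -u_+ x^{\exponentTheo}\to-\infty$ dominates and $QV/V\to 0$ for any positive $\alpha,\beta$. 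For $\exponentTheo=0$ the ratio converges to $\exp(-\alpha u_+ + \tfrac12\alpha^2 s_\wedge^2)<1$ once $\alpha$ is chosen small enough, and symmetrically for $\beta$. For $\exponentTheo=1$ the delicate case is $u_+>1$, $u_->1$, where the proposal mean flips sign across $0$; there $V(\mu(x))/V(x)\sim\exp\!\bigl((\beta(u_+-1)-\alpha)x\bigr)$ as $x\to+\infty$ and symmetrically as $x\to-\infty$. Both exponents can be made negative by the \emph{same} positive pair $(\alpha,\beta)$ precisely when $(u_+-1)(u_--1)<1$, i.e.\ $(1-u_+)(1-u_-)<1$; the remaining three sub-cases for $\exponentTheo=1$ are strictly easier.

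For part (3) one must rule out \emph{every} $V\ge 1$. Adapting \cite[Theorem~3.2]{RT96}, I would compare the chain with the deterministic flow $\dot x=-u_+ x^{\exponentTheo}$, whose time to reach the origin from $x>0$ equals $x^{1-\exponentTheo}/\bigl(u_+(1-\exponentTheo)\bigr)$, which is super-linear in $|x|$ since $1-\exponentTheo>1$. Using the uniform bound $S(x)\le s_\wedge+o(1)$, the Gaussian fluctuations remain of order one, so a standard coupling shows that from $|x|$ large, the first-passage time $\tau_\smallSet$ to any compact $\smallSet$ satisfies $\tau_\smallSet\gtrsim |x|^{1-\exponentTheo}$ with probability bounded away from zero. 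Hence $\IE_x[r^{\tau_\smallSet}]=+\infty$ for every $r>1$ once $|x|$ is large, which by the equivalence in \cite[Chapter~15]{meyn:tweedie:2009} precludes geometric ergodicity. Ordinary ergodicity nevertheless follows from a polynomial Foster-Lyapunov choice $V(x)=1+|x|^p$, producing a drift of the form $QV\le V-c|x|^{p+\exponentTheo-1}+b\1_\smallSet$ together with the already established Harris structure.

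The main obstacle will be (3): the drift computation for the Lyapunov functions is mechanical, but closing the converse requires quantifying the \emph{distribution} of the return time, not merely its mean. The template in \cite{RT96} adapts line by line once one checks that the $o(\cdot)$ remainders in the hypotheses on $\mu$ and $S$ do not affect the first-passage comparison, which is the only place where the Gaussian structure is genuinely used.
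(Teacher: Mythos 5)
Your overall strategy coincides with the paper's: the proof given there is nothing more than the pointer ``see the proof of \cite[Theorem 3.1]{RT96}'', and your sketch is an unpacking of exactly that template --- exponential Foster--Lyapunov functions $V(x)=\exp(\alpha x^+ + \beta x^-)$ for the geometric drift in cases (1) and (2) (including the correct identification of $(1-u_+)(1-u_-)<1$ as precisely the condition allowing a common pair $(\alpha,\beta)$ in the oscillating case $u_+,u_->1$), a polynomial Lyapunov function for ordinary ergodicity in case (3), and a first-passage-time comparison for the negative half of (3). One cosmetic point in (1)--(2): the prefactor $2$ in your displayed bound on $QV$ must be removed by splitting $\IE[V(Y)]$ into the contributions of $\{Y>0\}$ and $\{Y\le 0\}$ before passing to the limit (otherwise the case $\chi=0$ with small $\alpha$ does not close); since you state the correct limiting ratio $\exp(-\alpha u_+ +\tfrac12\alpha^2 s_\wedge^2)$, this is easily repaired.

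The one genuine gap is in the converse half of (3). From ``$\tau_{\smallSet}\gtrsim |x|^{1-\chi}$ with probability bounded away from zero'' you conclude $\IE_x[r^{\tau_{\smallSet}}]=+\infty$; this does not follow --- it only yields the finite lower bound $\IE_x[r^{\tau_{\smallSet}}]\ge \delta\, r^{c|x|^{1-\chi}}$ for each fixed $x$. The argument you need (and the one in \cite{RT96}) is: if $Q$ were geometrically ergodic there would exist $r>1$ and a small set $\smallSet$ with $\sup_{x\in\smallSet}\IE_x[r^{\tau_{\smallSet}}]<\infty$; but starting from $\smallSet$ the chain reaches level $R$ before returning with probability at least $e^{-cR}$ (the increments are Gaussian with $O(1)$ variance and $o(1)$ drift), after which your first-passage bound forces $r^{\tau_{\smallSet}}\ge r^{c'R^{1-\chi}}$ with probability at least $\delta$. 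Since $1-\chi>1$, the stretched-exponential gain beats the exponential cost, and summing over $R$ gives $\IE_x[r^{\tau_{\smallSet}}]=\infty$ for $x\in\smallSet$, the desired contradiction. You correctly flag that the distribution of the return time must be quantified, but as written the sketch stops one step short of the contradiction.
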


\begin{proof}
See the proof of \cite[Theorem 3.1]{RT96}.
\end{proof}

\begin{theorem}
\label{theo_convergence_Gaussian_Markov_kernel_S_unbounded}
Assume Assumption \ref{hyp:Gaussian_Markov_kernel}, and there exist $s_\vee,u_+,u_- \in \rset_+^*$ and $\exponentTheo \in \rset$ such that:
\begin{equation*}
\liminf_{\abs{x} \to \plusinfty} S(x)\geq s_\vee \eqsp,
 \end{equation*}
 \begin{equation*}
\lim_{x \to \plusinfty} S(x)^{-1}\mu(x)x^{-\exponentTheo} = -u_+ \eqsp, \text{ and }
\lim_{x \to -\infty}  S(x)^{-1}\mu(x)\abs{x}^{-\exponentTheo} = u_- \eqsp.
\end{equation*}
\begin{enumerate}[(1)]
\item
\label{item:theo_convergence_Gaussian_Markov_kernel_S_unbounded_1}
If $\exponentTheo >1$, then $Q$ is transient.
\item
\label{item:theo_convergence_Gaussian_Markov_kernel_S_unbounded_2}
If $\exponentTheo =1$ and $(u_+ \wedge u_-) s_\vee >1$, then $Q$ is transient.
\end{enumerate}
\end{theorem}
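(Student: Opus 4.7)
The plan is to adapt the strategy of \cite[Theorem 3.2]{RT96}. As recorded in the preamble to Section~\ref{subsec:convergence_unadjusted}, Assumption~\ref{hyp:Gaussian_Markov_kernel} already gives $\Leb$-irreducibility, aperiodicity, and petiteness of every compact set; hence by the drift criterion for transience \cite[Theorem~8.4.2]{meyn:tweedie:2009} it will suffice to exhibit a bounded measurable $V\colon\rset\to\coint{0,\plusinfty}$ and a compact interval $\smallSet=\ccint{-R,R}$ with $V$ vanishing on $\smallSet$, $V$ not identically zero, and $QV(x)\geq V(x)$ for every $x\notin\smallSet$. Such $V$ will force $\smallSet$ to be uniformly transient and, combined with irreducibility, rule out Harris recurrence, establishing transience.

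The natural candidate is $V(x)=\bracket{1-\rme^{-(\abs{x}-R)}}_+$, which vanishes on $\smallSet$, is bounded by $1$, and is strictly increasing in $\abs{x}$ for $\abs{x}>R$. Writing $Y=\mu(x)+S(x)Z$ with $Z\sim\mathcal{N}(0,1)$, the event $\{\abs{Y}\geq(1+\delta)\abs{x}\}$ contains $\{\abs{Z}\leq\tau(x)\}$, where
\[
\tau(x):=\frac{\abs{\mu(x)}-(1+\delta)\abs{x}}{S(x)}\eqsp,
\]
so the crux becomes showing $\tau(x)\to\plusinfty$ as $\abs{x}\to\plusinfty$ for a suitable $\delta>0$. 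In case~(1), the assumption $\chi>1$ yields $\abs{\mu(x)}/S(x)\sim u_\pm\abs{x}^\chi$, which dominates $\abs{x}$ for any $\delta>0$. In case~(2), the hypothesis $(u_+\wedge u_-)s_\vee>1$ allows one to pick $\delta>0$ with $(u_+\wedge u_-)s_\vee>1+\delta$, so that asymptotically $\tau(x)\geq\abs{x}\{(u_+\wedge u_-)-(1+\delta)/s_\vee\}\to\plusinfty$.

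A standard Gaussian tail estimate then makes $\eta(x):=\proba{\abs{Y}<(1+\delta)\abs{x}}$ decay faster than any exponential in $\abs{x}$, and the monotonicity of $V$ gives $QV(x)\geq(1-\eta(x))\bracket{1-\rme^{-((1+\delta)\abs{x}-R)}}$. A short algebra then reduces the desired drift inequality $QV(x)\geq V(x)$ to
\[
\eta(x)\,\rme^{\abs{x}-R}+(1-\eta(x))\,\rme^{-\delta\abs{x}}\leq 1\eqsp,
\]
which will hold for $\abs{x}$ beyond some threshold $R_0$; taking $R\geq R_0$ will close the argument. I expect the main technical obstacle to lie in case~(2): since $S(x)$ may itself be unbounded and only its $\liminf$ is controlled from below by $s_\vee$, the strict inequality $(u_+\wedge u_-)s_\vee>1$ is crucial, and the slack it provides must be carefully exploited through the choice of $\delta$ to guarantee the required uniform super-exponential decay of $\eta(x)$.
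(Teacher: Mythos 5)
Your proof is correct and follows essentially the same route as the paper, which itself gives no argument beyond deferring to \cite[Theorem 3.2]{RT96}: a bounded monotone test function vanishing on a compact set, the overshoot estimate $\abs{Y}\geq(1+\delta)\abs{x}$ off a Gaussian tail event of super-exponentially small probability, and the Meyn--Tweedie drift criterion for transience. Your handling of the two cases (using $\chi>1$, respectively the slack in $(u_+\wedge u_-)s_\vee>1$ together with $\liminf S\geq s_\vee$, to force $\tau(x)\to\plusinfty$ at least linearly) is exactly the adaptation the paper intends.
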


\begin{proof}
See the proof of \cite[Theorem 3.2]{RT96}.
\end{proof}

\subsubsection*{Ergodicity of the unadjusted proposals for the class $\Escr(\beta,\gamma)$}
\label{subsec:4.4}
We now apply Theorems \ref{theo_convergence_Gaussian_Markov_kernel_S_bounded} and \ref{theo_convergence_Gaussian_Markov_kernel_S_unbounded}
in order to study the ergodicity of the  different unadjusted proposals applied to $\pi \in \Escr(\beta,\gamma)$.  In the case $\beta \in \ooint{0,2}$ all the three algorithms (fULA,mUOA,bUOA) have their mean map behaving like
$x -\beta \gamma x \abs{x}^{\beta-2}/2$ at infinity and their variance map bounded from above. This is exactly the behaviour that ULA \cite{RT96} has for the same values of $\beta$, thus it should not be a surprise that Theorem \ref{theo_convergence_Gaussian_Markov_kernel_S_bounded} implies that all the three algorithms behaved as the ULA does for the corresponding values, namely being ergodic for $\beta \in \ooint{0,1}$ and geometrically ergodic for $\beta \in \coint{1,2}$. Furthermore, for values of $\beta \geq 2$ we have the following cases.
\begin{enumerate}[(a)]
%\item
%For $\beta \in \ooint{0,1}$, fULA, mUOA and bUOA are ergodic but not geometrically ergodic by Theorem \ref{theo_convergence_Gaussian_Markov_kernel_S_bounded}-\eqref{item:theo_convergence_Gaussian_Markov_kernel_S_bounded_3} since the three of them, the mean value map is equivalent at infinity to $x -\beta \gamma x \abs{x}^{\beta-2}$ and the variance map is bounded from above.
%\item
%For $\beta \in \coint{1,2}$, fULA, mUOA and bUOA are geometrically ergodic by Theorem \ref{theo_convergence_Gaussian_Markov_kernel_S_bounded}-\eqref{item:theo_convergence_Gaussian_Markov_kernel_S_bounded_1}  since for the three of them, the mean value map is equivalent at infinity to $x -\beta \gamma x \abs{x}^{\beta-2}$ and the variance map is bounded from above.
\item For $\beta = 2$,
\begin{enumerate}[-]
\item  fULA is geometrically ergodic if $h\gamma(1+h \gamma /6) \in \ooint{0,2}$ by Theorem
  \ref{theo_convergence_Gaussian_Markov_kernel_S_bounded}-\eqref{item:theo_convergence_Gaussian_Markov_kernel_S_bounded_2},
  and is transcient if $h\gamma(1+h \gamma /6) >2$ by Theorem
  \ref{theo_convergence_Gaussian_Markov_kernel_S_unbounded}-\eqref{item:theo_convergence_Gaussian_Markov_kernel_S_unbounded_2},
  since $\mu^{\fMALA}$ is equivalent at infinity to $(1-h \gamma(1+h
  \gamma /6))x$ and $S^{\fMALA}(x)$ is constant for $\abs{x} \geq
  R_\pi$.
\item
 mUOA is geometrically ergodic if $1+2(h\gamma)^2/3 -\rme^{-\gamma h}\in
  \ooint{0,2}$ by Theorem
  \ref{theo_convergence_Gaussian_Markov_kernel_S_bounded}-\eqref{item:theo_convergence_Gaussian_Markov_kernel_S_bounded_2},
  and is transcient if $ 1+2(h\gamma)^2/3 -\rme^{-\gamma h} >2$ by Theorem
  \ref{theo_convergence_Gaussian_Markov_kernel_S_unbounded}-\eqref{item:theo_convergence_Gaussian_Markov_kernel_S_unbounded_2},
  since $\mu^{\mO}$ is equivalent at infinity to $(\rme^{-\gamma h}-2(h\gamma)^2/3 )x$ and $S^{\mO}(x)$ is constant for $\abs{x} \geq
  R_\pi$.
\item bUOA is geometrically ergodic by Theorem
  \ref{theo_convergence_Gaussian_Markov_kernel_S_bounded}-\eqref{item:theo_convergence_Gaussian_Markov_kernel_S_bounded_2},
  since $\mu^{\bO}$ is equivalent at infinity to $-2x/3$
  and $S^{\bO}(x)$ is constant for $\abs{x} \geq R_\pi$.
\end{enumerate}
\item
For $\beta >2$,
\begin{enumerate}[-]
\item fULA and mUOA are transcient by Theorem
  \ref{theo_convergence_Gaussian_Markov_kernel_S_unbounded}-\eqref{item:theo_convergence_Gaussian_Markov_kernel_S_unbounded_1}
  since their mean value map is equivalent at infinity to $-C_1 \abs{x}^{2 \beta -2}/x$,
  and their variance map to $C_2\abs{x}^{\beta-2}$ for some constants $C_1,C_2 >0$, and
  their variance map are bounded from below.
\item
bUOA is geometrically ergodic by Theorem \ref{theo_convergence_Gaussian_Markov_kernel_S_bounded}-\eqref{item:theo_convergence_Gaussian_Markov_kernel_S_bounded_1} since its mean value map is equivalent at infinity to $\defSet{1-5/(3(\beta-1))}x$ and its variance map is bounded from above.
\end{enumerate}
\end{enumerate}

The summary of our findings can be found in Table
\ref{tb:summary_1}.

\begin{table}[th]
\begin{center}    \begin{tabular}{| p{2.cm} | p{2.cm} | p{2.cm} | p{2.35cm}|p{2.cm} | }
    \hline
     Method & $\beta \in (0,1)$  & $\beta \in [1,2)$ &  $\beta=2 $ &  $\beta>2$ \\   \hline \hline
 fULA  \eqref{eq:fMALA}   & ergodic & geometrically ergodic & geometrically ergodic/transient & transient\\  \hline
 mUOA \eqref{eq:mO} 	    & ergodic&  geometrically ergodic & geometrically ergodic/transient  & transient\\  \hline
 bUOA \eqref{eq:bO}   & ergodic& geometrically ergodic & geometrically ergodic & geometrically ergodic\\  \hline
    \end{tabular}
    \end{center}
\caption{Summary of ergodicity results for the unadjusted proposals for the class $\Escr(\beta,\gamma)$.}
\label{tb:summary_1}
\end{table}

%It is not surprising to get the same result for the three proposals than MALA for $\beta <2$, since in that case, these proposals are equivalent at infinity to the proposal of MALA.

%\modk{QUESTION FOR GARETH: THESE ARE NEW THEOREMS IF I UNDERSTAND CORRECTLY, SHALL WE MENTION IN THE INTRO}

\section{Numerical illustration of the improved efficiency}
\label{sec:numer}

\begin{figure}[t!]
\centering
 %\bigskip %{0.82}
 \scalebox{0.70}{\global\def\path{#1}\input{figs/figefficiency.inp}}
 \bigskip
 \caption{ First-order efficiency of the new fMALA and the standard
   MALA for the double well potential $g(x)=-\frac14x^4+\frac12x^2$,
   as a function of the overall acceptance rates in dimensions
   $d=10,100,500,1000$.  The solid line is the
   reference asymptotic curve of efficiency for the new fMALA,
   normalised to have the same maximum value as the finite dimensional
   fMALA.
\label{fig:figefficiency}}
\end{figure}

In this section, we illustrate our analysis (Section \ref{ssec:optimal_scaling_fMALA}) of the asymptotic behaviour of fMALA as
the dimension $d$ tends to infinity, and we demonstrate its gain of efficiency as $d$ increases compared to the standard MALA.
Following \cite{RR98}, we define the first-order efficiency of a multidimensional Markov chain $\{X_k, k \in \nset\}$ with first component denoted $X_k^{(1)}$ as $\IE[(X_{k+1}^{(1)}-X_k^{(1)})^2]$.
In Figure \ref{fig:figefficiency}, we consider as a test problem the product case \eqref{eq:prod}
using the double well potential with $g(x)=-\frac14x^4+\frac12x^2$ in dimensions
 $d=10,100,500,1000$, respectively.
 We consider many time stepsizes $h=\ell^2d^{-1/5}$, plotting the first order efficiency (multiplied by $d^{1/5}$ because this is the scale which is asymptotically constant for fMALA as $d\rightarrow \infty$) as a function of the acceptance rate for the standard MALA (white bullets) and the acceptance rate $\acceptMean_d^{\fMALA}(\ell)$ of the improved version fMALA (black bullets), respectively. For simplicity, each chain is started from the origin.
The expectations are approximated as the average over $2\times 10^5$ iterations of the algorithms and we use the same sets of generated random numbers for both methods.
For comparison, we also include (as solid lines) the asymptotic efficiency curve of fMALA as $d$ goes to infinity, normalised to have the same maximum as fMALA in finite dimension $d$. This corresponds to the (rescaled) limiting diffusion speed $\speedMeasure^{\fMALA} (\ell)$ as a function of $\acceptMean^{\fMALA} (\ell)$ (quantities given respectively in Theorems \ref{theo:accept_ratio_fMALA} and \ref{theo:scaling:fMALA}).
We observe excellent agreement of the numerical first order efficiency
compared to the asymptotic one, especially as $d$ increases, which corroborates the scaling results of fMALA. 
In addition, we observe for the considered dimensions $d$ that the optimal acceptance rate maximizing the first-order efficiency remains very close to the limiting value of 0.704 predicted in Theorem \ref{theo:scaling:fMALA}.
This numerical experiment shows that the efficiency improvement of fMALA compared to MALA
is significant and indeed increases as the dimension $d$ increases, which confirms the analysis of Section \ref{ssec:optimal_scaling_fMALA}.

\begin{figure}[t!]
 \centering \small
  %\bigskip %{0.82}
  %\scalebox{0.70}{\GGGinput{figtails/}{figtails}}
	\begin{subfigure}[t]{0.32\textwidth}
  \scalebox{0.70}{\global\def\path{#1}\input{figtails/figtailsa.inp}}
	\bigskip
	\caption{\small standard schemes. }
	\label{fig:figtailsa}
	\end{subfigure}
	\begin{subfigure}[t]{0.32\textwidth}
  \scalebox{0.70}{\global\def\path{#1}\input{figtails/figtailsb.inp}}
	\bigskip
	\caption{\small hybrid methods \newline using RWM ($h\sim d^{-1}$). }
	\label{fig:figtailsb}
	\end{subfigure}
	\begin{subfigure}[t]{0.32\textwidth}
  \scalebox{0.70}{\global\def\path{#1}\input{figtails/figtailsc.inp}}
	\bigskip
	\caption{\small hybrid methods \newline  using MALA ($h\sim d^{-1/2}$). }
	\label{fig:figtailsc}
	\end{subfigure}
 \caption{
Trace plots of $\|X\|^2$ for the Gaussian target density in dimension $d=1000$ when
starting at the origin.
 Comparison of fMALA with $h\sim d^{-1/5}$ (solid lines), MALA with $h\sim d^{-1/3}$ (dashed lines), RWM with $h\sim d^{-1}$ (dotted lines).
\label{fig:figtails}
 }
 \end{figure}

 \begin{figure}[t!]
 \centering
  %\bigskip %{0.82}
	\begin{subfigure}[t]{0.32\textwidth}
  \scalebox{0.70}{\global\def\path{#1}\input{figcorrelation/figcorrelationa.inp}}
	\bigskip
	\caption{\small standard schemes. }
	\label{fig:figcorrelationa}
	\end{subfigure}
	\begin{subfigure}[t]{0.32\textwidth}
  \scalebox{0.70}{\global\def\path{#1}\input{figcorrelation/figcorrelationb.inp}}
	\bigskip
	\caption{\small hybrid methods \newline using RWM ($h\sim d^{-1}$). }
	\label{fig:figcorrelationb}
	\end{subfigure}
	\begin{subfigure}[t]{0.32\textwidth}
  \scalebox{0.70}{\global\def\path{#1}\input{figcorrelation/figcorrelationc.inp}}
	\bigskip
	\caption{\small hybrid methods \newline  using MALA ($h\sim d^{-1/2}$). }
	\label{fig:figcorrelationc}
	\end{subfigure}
 \caption{
 Auto-correlation versus LAG for the Gaussian target density in dimension $d=1000$.
 Comparison of fMALA with $h\sim d^{-1/5}$ (black), MALA with $h\sim d^{-1/3}$ (white), RWM with $h\sim d^{-1}$ (gray).
\label{fig:figcorrelation}
 }
 \end{figure}

For our next experiments, we consider the $d$-dimensional
  zero-mean Gaussian distribution with covariance matrix $\Id$ for $d
  = 1000$, as target distribution. We aim to numerically study the
  transient behaviour of fMALA and propose some solutions to overcome
  this issue. In Figure~\ref{fig:figtails}, we plot the squared norm of $10^4$
  samples generated by the RWM, MALA, fMALA and some hybrid strategies
  for MALA and fMALA, all started from the origin.  
We also include a zoom on the first 100 steps.
% to see better the transient phase as
%   $\IE(\|X\|^2) \rightarrow d$ (see top pictures).
In Figure~\ref{fig:figtailsa}, we use standard implementations of the
schemes. The time step $h$ for each algorithm is chosen as the optimal
parameter based on the optimal scaling results of all the algorithms
at stationarity: for the RWM $h=2.38^2 d^{-1}$, for MALA $h= 1.65^2
d^{-1/3}$ and for fMALA $h= 1.79^2 d^{-1/5}$. It can observed that
MALA exhibits many rejected steps in contrast to RWM.  This is a known
issue of MALA in the transient phase \cite{christensen2005scaling,jourdain2014optimal} due to a tiny acceptance
probability at first steps, and the same behaviour can be observed for
fMALA, with zero accepted step in the present simulation.  To
circumvent this issue, the following hybrid MALA
scheme was presented in \cite{christensen2005scaling}.  The idea is to 
combine MALA with RWM at each step: with probability $1/2$, we apply
the MALA proposal \eqref{eq:MALA} with step size $h = 1.65^2
d^{-1/3}$, the optimal parameter for MALA at stationarity. Otherwise,
the RWM proposal \eqref{eq:RWM} is used with step size $h=2.38^2
d^{-1}$, the optimal parameter for the RWM at stationarity.  Indeed,
\cite{christensen2005scaling} and \cite{jourdain2014optimal} have
shown that the optimal scaling in the transient phase and at
stationarity is the same and scales as $d^{-1}$. In
Figure~\ref{fig:figtailsb}, the plots for this hybrid MALA are
presented, the same methodology is also applied for the hybrid
fMALA scheme, showing a behaviour similar to hybrid MALA.  In
Figure~\ref{fig:figtailsc}, the RWM proposal is replaced by the
MALA proposal \eqref{eq:MALA} with a different step size
$h=2d^{-1/2}$, which is the optimal parameter for MALA in the
transient phase according to \cite{christensen2005scaling}.  Again,
hybrid fMALA exhibits a behaviour similar to hybrid MALA.
%and it permits to avoid the transient issue of MALA and fMALA.

In Figure~\ref{fig:figcorrelation}, we consider again the same
  schemes and hybrid versions as in Figure~\ref{fig:figtails}, with
  the same step sizes, and we compare their autocorrelation
  function. We consider for each algorithms $2\cdot 10^5$ iterations started at stationarity,
  where the first $10^{3}$ iterations were
  discarded as burn-in.  In Figure~\ref{fig:figcorrelationa}, it can
  be observed that the autocorrelation associated with fMALA goes to
  $0$ quicker than the RWM and MALA.  In
  Figure~\ref{fig:figcorrelationb}, and
  Figure~\ref{fig:figcorrelationc}, we observe that by using hybrid strategies
  which are designed to robustify convergence from the transient phase,
  fMALA still comfortably outperforms MALA in terms of expected square efficiency
  (which is a stationary quantity).

Although our analysis applies only to product measure densities of the form \eqref{eq:prod},
we next consider the following non-product density in $\mathbb{R}^d$, defined using a normalization constant $Z_d$ and for $X_0=0$ as
\begin{equation} \label{eq:rhononproduct}
\pi(X_1,\ldots,X_d) = Z_d \prod_{i=1}^d \frac1{1+(X_i-\alpha(X_{i-1}))^2},
\end{equation}
where we consider the scalar functions $\alpha(x)=x/2$ and $\alpha(x)=\sin(x)$, respectively.
Notice that the density \eqref{eq:rhononproduct} is associated with the AR(1) process
$X_i = \alpha(X_{i-1}) + Z_n$
with non Gaussian (Cauchy) increments $Z_n$. Furthermore, we observe that in this case the Jacobian in \eqref{eq:fMALA} is a 
symmetric tridiagonal matrix, which implies that the  computational cost of the fMALA proposal is of the same order  $\mathcal{O}(d)$ as the standard MALA proposal.

In Figure~\ref{fig:fignonproduct}, we compare for many timesteps the standard MALA (left pictures) and the new fMALA (right pictures),
and plot the (scaled) first order efficiency $\IE[\|X_{k+1}-X_k\|^2/d]$ as a function of the overall acceptance rates, 
using the averages over $2\times 10^4$ iterations of the algorithms.
The initial condition for both algorithms is the same and is obtained after running $10^4$ steps of the RWM algorithm to get close to the target probability measure.
%\modg{The initial condition is at equilibrium, obtain applying $10^4$ steps of the random walk (RW) method.}
Analogously to the product case studied in Figure~\ref{fig:figefficiency}, 
we observe in both cases $\alpha(x)=x/2$ and $\alpha(x)=\sin(x)$ that the first-order efficiency of fMALA converges to a non-zero limiting curve with maximum close to the value $0.704$.
In contrast, the efficiency of the standard MALA drops to zero in this scaling where the first-order efficiency 
is multiplied with $d^{1/5}$. This numerical experiment suggests that our analysis 
in the product measure setting persists in the non product measure case.
\begin{figure}[t!]
\centering
 %\bigskip %{0.82}
 \scalebox{0.70}{\global\def\path{#1}\input{figsnonproduct/fignonproduct.inp}}
 \bigskip
\caption{
First-order efficiency of the new fMALA and the standard MALA as a function of the overall acceptance rates 
for the dimensions $d=100$ (white points), $d=500$ (gray points), $d=1000$ (dark points), respectively, for the non product density
\eqref{eq:rhononproduct} with $\alpha(x)=x/2$ (top pictures) and $\alpha(x)=\sin(x)$ (bottom pictures).
\label{fig:fignonproduct}}
\end{figure}

\medskip

\noindent \textbf{Acknowledgements.} 
The authors would like to thank Eric Moulines and Gabriel Stoltz for fruitful discussions, and
the anonymous referees for their useful comments that  greatly improved this paper.
%Gareth Roberts  would like to thank the EPSRC for support under  grants EP/D002060/1,	EP/K014463/1 and EP/K014463/1.
%The research of  G.\,V. is partially supported by the Swiss
%National Foundation, Grant No\,200020\_144313/1, and that of G.\  O.\ R. by EPSRC under grants EP/K014463/1 and EP/D002060/1.
%The work of A.D. is supported by the Agence Nationale de la Recherche, under grant ANR-14-CE23-0012 (COSMOS).
%\\[-1cm]

\bibliographystyle{abbrv}
\bibliography{MCMC}
%\bibliography{biblio}

\appendix

\section{Proof of Theorems~3.1 and 3.2}
%\section{Proof of Theorems~\textrm{\ref{theo:accept_ratio_fMALA}} and \ref{theo:scaling:fMALA}}
\label{app:scaling_proof}
We provide here the proofs of Theorems~\textrm{\ref{theo:accept_ratio_fMALA}} and \ref{theo:scaling:fMALA}  for the analysis of the optimal scaling properties of fMALA.
We use tools analogous to that of \cite{RGG97} and \cite{RR98}. Consider the generator of the jump process $\procLine^{d,\fMALA}$, defined for $\testFunction^d \in C^2_c(\rset^d)$, and $\varx \in \rset^d$ by
\[
A^{\fMALA}_{d}\testFunction^d(\varx) = d^{1/5} \expeLaw{\vary}{(\testFunction^d(\vary) - \testFunction^d(\varx) ) \alpha_d^{\fMALA}(\varx,\vary) } \eqsp,
\]
where $y$ follows the distribution defined by $q^{\fMALA}_d(\varx,\cdot)$. Also, consider the generator of the process $\{ G_t , t \geq 0\}$, solution of \eqref{eq:langevin_fMALA}, defined for $\testFunction \in C^2_c(\rset)$, and $\varx \in \rset^d$ by
\[
A^{\fMALA} \testFunction(\varx)  = (\speedMeasure(\ell) /2) ( \testFunction'(\varx_1) g(\varx_1) + \testFunction''(\varx_1)) \eqsp.
\]
We check  that the assumptions of \cite[Corollary 8.7,
Chapter 4]{ethier:kurtz:1986} are satisfied, which will imply Theorem
\ref{theo:scaling:fMALA}. These assumptions consist in showing there
exists a sequence of set $\{F_d \subset \rset^d, d \in \nset^* \}$ such that for all $T
\geq 0$:
\begin{align*}
\lim_{d \to \plusinfty} \proba{\procLine^{d,\fMALA}_s \in F_d \ , \  \forall s \in \ccint{0,T}} &= 1 \\
\lim_{d \to \plusinfty} \sup_{\varx \in F_d} \abs{A^{\fMALA}_d \testFunction(\varx) - A^{\fMALA}\testFunction(\varx)} & = 0 \eqsp,
\end{align*}
for all functions $\testFunction$ in a core of $A^{\fMALA}$, which strongly
separates points. Since $A^{\fMALA}$ is an operator on the set of
functions only depending on the first component, we restrict our
study on this class of functions, which belong to $C^{\infty}_c(\rset)$, since by \cite[Theorem 2.1,
Chapter 8]{ethier:kurtz:1986}, this set of functions  is a core for
$A^{\fMALA}$ which strongly separates points. The following lemma is
the proper result which was introduced in Section \ref{ssec:fd}. For the sequel, let $\lbrace \GaussianRV_i, i \in \nset^* \rbrace$  be a sequence of \iid~standard one-dimensional Gaussian random variables and $\xrm$ be a random variable distributed according to $\pi_1$. Also, for all $x \in \rset^d$, denote by $\vary^{\fMALA}$  the proposal of fMALA, defined by \eqref{eq:gaussian_proposal}, \eqref{eq:new_proposal} and \eqref{eq:new_proposal_2}, started at $x \in \rset^d$, with parameter $h_d$ and associated with the $d$-dimensional Gaussian random variable $\lbrace \GaussianRV_i , i=1,\cdots,d \rbrace $.
\begin{lemma}
\label{lem:taylor_expansion}
Assume Assumption \ref{hyp:log_density}.
% Let the proposal of fMALA started at $x \in \rset^d$, be given by $\vary^{\fMALA} = \mu^{\fMALA}(\varx,h_d) +
% S^{\fMALA}(\varx,h_d) \xi$, where $\mu^{fMALA}$ and $S^{\fMALA}$ are defined by
% \eqref{eq:new_proposal} and \eqref{eq:new_proposal_2}.
The following Taylor expansion in $h_d^{1/2}$ holds: for all $x \in \rset^d$ and $i\in \lbrace 1,\cdots,d \rbrace$,
\begin{equation}
\label{eq:lem_taylor_expansion_form}
\log \bracket{\frac{\pi(\vary^{\fMALA}_i) q^{\fMALA}(\vary^{\fMALA}_i,\varx_i)}{\pi(\varx_i)
    q^{\fMALA}(\varx_i,\vary^{\fMALA}_i)}} = \sum_{j=5}^{10}C_j^{\fMALA}(\varx_i,\GaussianRV_i)d^{-j/10} 
%  C_5^{\fMALA}(\varx_i,\GaussianRV_i)d^{-1/2}+ C_6^{\fMALA}(\varx_i,\GaussianRV_i)d^{-3/5}
% +C_7^{\fMALA}(\varx_i,\GaussianRV_i)d^{-7/10} \\
% +C_8^{\fMALA}(\varx_i,\GaussianRV_i)d^{-4/5} + C_9^{\fMALA}(\varx_i,\GaussianRV_i)d^{-9/10} +
% C_{10}^{\fMALA}(\varx_i,\GaussianRV_i)d^{-1} 
+ C_{11}^{\fMALA}(\varx_i,\GaussianRV_i,h_d)  \eqsp,
  \end{equation}
  where $C_5^{\fMALA}(\varx_1,\GaussianRV_1)$ is given in Appendix~\ref{app:expression_C5}.
Furthermore, for $j=6, \cdots, 10$, $C_j^{\fMALA}(\varx_i,\GaussianRV_i)$ are polynomials in $\GaussianRV_i$ and
derivatives of $g$ at $\varx_i$ and
\begin{align}
\label{eq:lem_taylor_expansion_expe1}
\expeLaw{\GaussianRV_1}{C_j^{\fMALA}(\xrm,\GaussianRV_1)}&= 0 \text{ for $j=5, \cdots ,9$} \eqsp, \\
\label{eq:lem_taylor_expansion_expe2}
\expeLaw{\xrm}{\left(\expeLaw{\GaussianRV_1}{C_5^{\fMALA}(\xrm,\GaussianRV_1 )\left| \xrm \right.) } \right)^2 }
&= \ell^{10} (K^{\fMALA})^2 =
-2 \expeLaw{\GaussianRV_1}{C_{10}^{\fMALA}(\xrm,\GaussianRV_1)} \eqsp.
\end{align}
In addition, there exists a sequence of sets $\{F^1_d \subset \IR^d, d \in \nset^* \}$ such that $\lim_{d \to \plusinfty}
d^{1/5} \pi_d((F_d^1)^c) = 0$ and for $j=6,\cdots,10$
\begin{equation}
\label{eq:lem_taylor_lim_d_infty_c_j}
\lim_{d \to \plusinfty } d^{-3/5}  \sup_{\varx \in F_d^1}\expeLaw{\GaussianRV}{\abs{\sum_{i=2}^d C_j(\varx_i^d,\GaussianRV_i) -
    \expeLaw{\GaussianRV_i}{C_j^{\fMALA}(\xrm,\GaussianRV_i)}}  } =0 \eqsp,
    \end{equation}
    and
    \begin{equation}
\label{eq:lem_taylor_lim_d_infty_c_11}
\lim_{d \to \plusinfty } \sup_{\varx \in F_d^1}\expeLaw{\GaussianRV}{\abs{ \sum_{i=2}^d
    C_{11}(\varx_i^d,\GaussianRV_i,h_d)}} =0 \eqsp.
  \end{equation}
  Finally,
  \begin{equation}
\label{eq::lem_taylor_lim_d_infty_accept_rate}
  \lim_{d \to \plusinfty} \sup_{\varx \in
    F_d^1}\expeLaw{\GaussianRV}{\abs{ \zeta^d}} = 0 \eqsp,
    \end{equation}
    with
\begin{multline*}
 \zeta^d = \sum_{i=2}^d \log \bracket{\frac{\pi(\vary^{\fMALA}_i)
        q^{\fMALA}(\vary^{\fMALA}_i,\varx_i)}{\pi(\varx_i) q^{\fMALA}(\varx_i,\vary^{\fMALA}_i)}}\\
    - \bracket{\bracket{d^{-1/2} \sum_{i=2}^d C_5(\varx_i^d,\GaussianRV_i) }
      -\ell^{10}(K^{\fMALA})^2 /2} \eqsp.
  \end{multline*}
\end{lemma}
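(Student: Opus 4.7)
The plan is to prove the Lemma by performing an explicit Taylor expansion of the log acceptance ratio in powers of $h_d^{1/2}$ on a single coordinate, then aggregating over the $d-1$ remaining coordinates by combining independence, a well-chosen ``good set'', and a remainder estimate based on polynomial growth of the derivatives of $g$. Fix a coordinate $i$. In the product i.i.d.\ setting with $\Sigma=\Id$, the $i$th component of the fMALA proposal reads $y^{\fMALA}_i = x_i + h_d\mu_1(x_i) + h_d^2\mu_2(x_i) + (h_d^{1/2}S_1(x_i) + h_d^{3/2}S_2(x_i))\xi_i$, where $\mu_1,\mu_2,S_1,S_2$ are the scalar restrictions of the coefficients in \eqref{eq:new_proposal}-\eqref{eq:new_proposal_2}. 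Since $g\in C^{10}$, the one-dimensional log-ratio is a $C^{10}$ function of $h_d^{1/2}$, and Taylor's theorem at order $11$ yields \eqref{eq:lem_taylor_expansion_form} with $C_j$ polynomial in $\xi_i$ and in the derivatives $g^{(k)}(x_i)$. The whole point of the construction of fMALA, carried out in the Mathematica computation cited in Section~\ref{sec:der}, is that the coefficients $C_1,\ldots,C_4$ of $h_d^{1/2},h_d,h_d^{3/2},h_d^2$ vanish identically, so that the expansion starts at order $h_d^{5/2}=d^{-1/2}$.

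The mean identities \eqref{eq:lem_taylor_expansion_expe1}-\eqref{eq:lem_taylor_expansion_expe2} are then verified by a symmetrization/reversibility argument. The log-ratio, viewed as a function of $(x_i,y_i)$, satisfies the antisymmetry $\log(\pi(y_i)q(y_i,x_i)/\pi(x_i)q(x_i,y_i)) = -\log(\pi(x_i)q(x_i,y_i)/\pi(y_i)q(y_i,x_i))$. Substituting the Taylor expansion, integrating against $\pi_1(x_i)\phi(\xi_i)\,dx_i\,d\xi_i$, and matching orders in $h_d^{1/2}$ forces the expectation of each $C_j$, $j=5,\ldots,9$, to vanish. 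The remaining identity \eqref{eq:lem_taylor_expansion_expe2} is the familiar Bennett-type identity relating the CLT variance of the leading order to the mean of the first non-vanishing even correction; it is obtained by matching the $h_d^5$ terms on both sides of the same reversibility relation, and is formally the fMALA analogue of the identity used for MALA in \cite{RR98} pushed four orders higher.

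For the uniform estimates I would define $V_j(x) = \E_\xi[C_j(x,\xi)^2]$ for $j=5,\ldots,10$ and $W(x) = h_d^{-11}\E_\xi[C_{11}(x,\xi,h_d)^2]$. By Assumption~\ref{hyp:log_density}-\ref{item:hyp_log_density_polynomial}, each $V_j$ and $W$ is bounded by a polynomial $P_j$, and by \ref{item:hyp_log_density_moments} all such polynomials are $\pi_1$-integrable. Choose a sequence $\rho_d\to\infty$ with $\rho_d=o(d^{1/5})$ and set
\[
F_d^1 = \Bigl\{x\in\IR^d:\ \sum_{i=2}^d P_j(|x_i|)\leq d\,\rho_d,\ j=5,\ldots,11\Bigr\}.
\]
By Markov's inequality and the i.i.d.\ structure $\pi_d((F_d^1)^c)\leq C\rho_d^{-1}$, so $d^{1/5}\pi_d((F_d^1)^c)\to 0$. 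Independence of $\{\xi_i\}$ together with Cauchy-Schwarz gives
\[
\E_\xi\Bigl[\Bigl|\sum_{i=2}^d\bigl(C_j(x_i,\xi_i)-\E_\xi C_j(x_i,\xi_i)\bigr)\Bigr|\Bigr]\leq \Bigl(\sum_{i=2}^d V_j(x_i)\Bigr)^{1/2}\leq(d\rho_d)^{1/2},
\]
which, divided by $d^{3/5}$, yields $d^{-1/10}\rho_d^{1/2}\to 0$ uniformly on $F_d^1$, establishing \eqref{eq:lem_taylor_lim_d_infty_c_j}.

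The principal technical obstacle is the remainder bound \eqref{eq:lem_taylor_lim_d_infty_c_11}. Writing $C_{11}(x_i,\xi_i,h_d) = h_d^{11/2}L_{11}(x_i,h_d^*,\xi_i)$ from Taylor's theorem with integral remainder requires a bound on $L_{11}$ that is \emph{uniform} as $h_d$ varies in a neighbourhood of $0$, which in turn needs uniform control of all derivatives of $g$ up to order $10$: this is exactly what Assumption~\ref{hyp:log_density}-\ref{item:hyp_log_density_lipschitz_Cten}-\ref{item:hyp_log_density_polynomial} provides. Once $|L_{11}|\leq C(1+|x_i|^{\kappa'})(1+|\xi_i|^{\kappa''})$ is in hand, on $F_d^1$ one gets
\[
\sup_{x\in F_d^1}\E_\xi\Bigl[\Bigl|\sum_{i=2}^d C_{11}(x_i,\xi_i,h_d)\Bigr|\Bigr]\leq C h_d^{11/2}\,d\,\rho_d = C d^{-1/10}\rho_d \longrightarrow 0.
\]
Assembling the pieces, \eqref{eq::lem_taylor_lim_d_infty_accept_rate} follows: the orders $j=6,\ldots,9$ contribute $o(1)$ because their mean vanishes and the fluctuations are controlled by the previous step; the order $j=10$ contributes its deterministic mean, which by \eqref{eq:lem_taylor_expansion_expe2} equals $-\ell^{10}(K^{\fMALA})^2/2$; and the order $j=5$ is the only surviving (stochastic) term $d^{-1/2}\sum_{i=2}^d C_5$. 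This is precisely the content of $\zeta^d$, and its vanishing in $L^1$ uniformly on $F_d^1$ concludes the proof.
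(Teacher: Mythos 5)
Your overall architecture (order-$11$ Taylor expansion in $h_d^{1/2}$, moment identities, a good set of $\varx$'s, Cauchy--Schwarz for the fluctuations, mean-value control of the remainder) is the same as the paper's, and your symmetrization argument for \eqref{eq:lem_taylor_expansion_expe1}--\eqref{eq:lem_taylor_expansion_expe2} is a genuinely different route: the paper instead observes that $C_5,C_7,C_9$ contain only odd powers of $\GaussianRV_i$ (so their conditional mean given $\varx_i$ vanishes pointwise) and, for $j=6,8$ and for \eqref{eq:lem_taylor_expansion_expe2}, exhibits explicit anti-derivatives of $\rme^{g}\,\E_{\GaussianRV}[C_j]$ so that integrability (Assumption~\ref{hyp:log_density}-\ref{item:hyp_log_density_moments}) forces the integral to vanish. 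Your identity $\E[(1+\rme^{R})R]=0$, expanded in $h_d^{1/2}$, does recover the averaged statements and the relation $\E[C_{10}]=-\E[C_5^2]/2$, which is arguably more conceptual. Note, however, that it only yields the expectations averaged over $\xrm\sim\pi_1$; the \emph{pointwise} vanishing of $\E_{\GaussianRV_1}[C_5(\varx_1,\GaussianRV_1)]$, which the paper uses in Lemma~\ref{lem:unif_TCL} to identify the conditional variance with the conditional second moment, comes only from the parity observation.

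There are two concrete gaps in your quantitative treatment. First, your good set $F_d^1=\{\sum_i P_j(|x_i|)\le d\rho_d\}$ with a first-moment Markov bound only gives $\pi_d((F_d^1)^c)\lesssim \rho_d^{-1}$; to obtain $d^{1/5}\pi_d((F_d^1)^c)\to0$ you would then need $\rho_d\gg d^{1/5}$, which contradicts your own requirement $\rho_d=o(d^{1/5})$ and makes your remainder bound $Cd^{-1/10}\rho_d$ and your fluctuation bound $d^{-1/10}\rho_d^{1/2}$ blow up: no choice of $\rho_d$ closes all three estimates simultaneously with raw Markov. The paper avoids this by defining the good sets through \emph{centered} deviations (e.g. $|\sum_i \Varfun_j^{\fMALA}(\varx_i)-\E[\Varfun_j^{\fMALA}(\xrm)]|\le d^{23/20}$ and $|\sum_i\Prm_2(\varx_i)-\E[\Prm_2(\xrm)]|\le d$) and bounding the complements by Chebyshev or fourth-moment Markov, yielding probabilities of order $d^{-3/10}$ or smaller. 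Second, in \eqref{eq:lem_taylor_lim_d_infty_c_j} the centering constant is the unconditional mean $\E[C_j^{\fMALA}(\xrm,\GaussianRV_i)]$, whereas your Cauchy--Schwarz step only controls $\sum_i\bigl(C_j(\varx_i,\GaussianRV_i)-\E_{\GaussianRV_i}[C_j(\varx_i,\GaussianRV_i)]\bigr)$. For $j=6,8,10$ the conditional means $\tilde C_j^{\fMALA}(\varx_i)$ are not identically zero, so you must additionally show that $|\sum_i\tilde C_j^{\fMALA}(\varx_i)-(d-1)\E[\tilde C_j^{\fMALA}(\xrm)]|=o(d^{3/5})$ on the good set; this is precisely the extra constraint (threshold $d^{23/40}$, controlled by a fourth-moment bound) that the paper builds into $F^1_{d,j,1}$ and that is absent from your $F_d^1$. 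Both gaps are repairable by adopting the paper's centered-deviation sets, but as written your estimates do not close.
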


\begin{proof}
  The Taylor expansion was computed using the computational software
  Mathematica \cite{mathematica:10}. Then, since just odd powers of
  $\xi_i$ occur in $C_5,C_7$ and $C_9$, we deduce
  \eqref{eq:lem_taylor_expansion_expe1} for $j=5,7,9$. Furthermore by
  explicit calculation, the anti-derivative in $x_1$ of $\rme^{g(x_1)}
  \expeLaw{\xi_1}{C_j^{\fMALA}(x_1,\xi_1)}$, for $j=6,8$, and
  $\rme^{g(x_1)} \expeLaw{\xi_1}{C_5^{\fMALA}(x_1,\xi_1)^2 +2
    C_{10}^{\fMALA}(x_1,\xi_1)}$ are on the form of some polynomials
  in the derivatives of $g$ in $x_1$ times $\rme^{g(x_1)}$. Therefore,
  Assumption
  \ref{hyp:log_density}-\eqref{item:hyp_log_density_moments} implies
  \eqref{eq:lem_taylor_expansion_expe1} for $j=6,8$ and
  \eqref{eq:lem_taylor_expansion_expe2}. We now build the sequence of
  sets $F_d^1$, which satisfies the claimed properties. \\
Denote for $j=6, \cdots, 10$ and $\varx_i \in \rset$, $\tilde{C}_j^{\fMALA}(x_i) = \expeLaw{\xi_i}{C_j^{\fMALA}(x_i,\xi_i)}$ and $\Varfun_j^{\fMALA}(x_i) = \Var\left[ C_j^{\fMALA}(x_i,\xi_i) \right]$, which are bounded by a polynomial $\Prm_1$ in $x_i$ by Assumption \ref{hyp:log_density}-\eqref{item:hyp_log_density_polynomial} since $C_j^{\fMALA}(x_i,\xi_i)$ are polynomials in $\xi_i$ and the derivatives of $g$ at $x_i$. Therefore for all $k \in \nset^*$,
\begin{equation}
\label{eq:moment_bound_Cj_Vj}
\expeLaw{\xrm}{\abs{\tilde{C}_j^{\fMALA}(\xrm)}^k} + \expeLaw{\xrm}{\abs{\Varfun_j^{\fMALA}(\xrm)}^k} < ÷\plusinfty \eqsp.
\end{equation}
Consider for all $j=6,\cdots, 10$, the sequence of sets $F_{d,j}^1 \in \rset^d$ defined by $F_{d,j}^1 = F_{d,j,1}^1 \cap F_{d,j,2}^1$ where
\begin{align}
\label{eq:def_fdj1}
F_{d,j,1}^1 &=  \defSet{\varx \in \rset^d \ ;\ \abs{\sum_{i=2} ^d \tilde{C}_j^{\fMALA}(\varx_i) - \expeLaw{\xrm}{\tilde{C}_j^{\fMALA}(\xrm)}} \leq d^{23/40}} \\
\label{eq:def_fdj2}
F_{d,j,2}^1 &= \defSet{\varx \in \rset^d \ ;\ \abs{\sum_{i=2} ^d \Varfun_j^{\fMALA}(\xrm) - \expeLaw{\xrm}{\Varfun_j^{\fMALA}(\xrm)}}  \leq d^{23/20}}\eqsp.
\end{align}
Note that $\lim_{d \to \plusinfty} d^{1/5} \pi_d((F_{d,j}^1)^c)=0$ for all $j=6 \cdots 10$, is implied by  $\lim_{d \to \plusinfty} d^{1/5} \pi_d((F_{d,j,1}^1)^c) = 0$ and  $\lim_{d \to \plusinfty}d^{1/5} \pi_d((F_{d,j,2}^1)^c) = 0$.
Let $\defSet{\xrm_i, i \geq 2 }$ be a sequence of \iid~random variables with distribution $\pi_1$. By definition of $F_{d,j,1}^1$, the Markov inequality and independence, we get
\begin{align}
\nonumber
 & d^{1/5}\pi_d((F_{d,j,1}^1)^c) \leq d^{-21/10} \mathbb{E}\left[\bracket{\sum_{i=2} ^d \tilde{C}_j^{\fMALA}(\xrm_i) - \expeLaw{\xrm}{\tilde{C}_j^{\fMALA}(\xrm)}}^4\right]\\
  \nonumber
 & \leq \sum_{i_1,i_2 =2}^d \mathbb{E}\left[\bracket{ \tilde{C}_j^{\fMALA}(\xrm_{i_1}) - \expeLaw{\xrm}{\tilde{C}_j^{\fMALA}(\xrm)}}^2 \bracket{ \tilde{C}_j^{\fMALA}(\xrm_{i_2}) - \expeLaw{\xrm}{\tilde{C}_j^{\fMALA}(\xrm)}}^2 \right] \\
%\nonumber
%  &  \leq  d^{-21/10} \left( \sum_{i=2} ^d \mathbb{E}\left[\bracket{ \tilde{C}_j^{\fMALA}(\xrm_i) - \expeLaw{\xrm}{\tilde{C}_j^{\fMALA}(\xrm)}}^4\right] \right. \\
%\nonumber
%  & \quad \left. + \sum_{i_1,i_2 =2}^d \mathbb{E}\left[\bracket{ \tilde{C}_j^{\fMALA}(\xrm_{i_1}) - \expeLaw{\xrm}{\tilde{C}_j^{\fMALA}(\xrm)}}^2 \bracket{ \tilde{C}_j^{\fMALA}(\xrm_{i_2}) - \expeLaw{\xrm}{\tilde{C}_j^{\fMALA}(\xrm)}}^2 \right] \right) \\
\label{eq:bound_fdj1}
&  \leq  d^{-1/10}\  \expeLaw{\xrm}{\bracket{ \tilde{C}_j^{\fMALA}(\xrm) -
    \expeLaw{\xrm}{\tilde{C}_j^{\fMALA}(\xrm)}}^4} \eqsp,
\end{align}
where we have used the Young inequality for the last line. On another hand, using the Chebyshev and H\"older inequality, we get
\begin{align}
\nonumber
  d^{1/5}\pi_d((F_{d,j,2}^1)^c) &\leq d^{-21/10}\mathbb{E}\left[\bracket{\sum_{i=2} ^d \Varfun_j^{\fMALA}(\xrm_i) - \expeLaw{\xrm}{\Varfun_j^{\fMALA}(\xrm)}}^2\right]\\
\label{eq:bound_fdj2}
&\leq d^{-1/10} \ \expeLaw{\xrm}{ \bracket{\Varfun_j^{\fMALA}(\xrm) - \expeLaw{\xrm}{\Varfun_j^{\fMALA}(\xrm)}}^2} \eqsp.
\end{align}
Therefore \eqref{eq:moment_bound_Cj_Vj}, \eqref{eq:bound_fdj1} and \eqref{eq:bound_fdj2} imply  that $\lim_{d \to \plusinfty} d^{1/5} \pi_d((F_{d,j}^1)^c) = 0$ for all $j=6 ,\cdots, 10$. In addition, for all $\varx\in F_{d,j}^1$, by the triangle inequality and the Cauchy-Schwarz inequality we have  for all $j=6 ,\cdots, 10$
\begin{multline*}
\expeLaw{\xi}{\abs{\sum_{i=2}^d C_j^{\fMALA}(\varx_i,\xi_i) - \expeLaw{\xi}{C_j^{\fMALA}(\xrm,\xi_i)}}} \leq  \abs{\sum_{i=2} ^d \Varfun_j^{\fMALA}(\varx_i) - \expeLaw{\xrm}{\Varfun_j^{\fMALA}(\xrm)}}^{1/2} \\
+ d^{1/2} \expeLaw{\xrm}{\Varfun_j^{\fMALA}(\xrm)}^{1/2} + \abs{\sum_{i=2}^d \tilde{C}_j^{\fMALA}(\varx_i) - \expeLaw{\xi_i}{C_j^{\fMALA}(\xrm,\xi_i)}}  \eqsp.
\end{multline*}
Therefore by this inequality, \eqref{eq:def_fdj1} and \eqref{eq:def_fdj2}, there exists a constant $M_1$ such that
\[
d^{3/5} \sup_{\varx \in F_{d,j}^1} \expeLaw{\xi}{\abs{\sum_{i=2}^d C_j^{\fMALA}(\varx_i,\xi_i) - \expeLaw{\xi}{C_j^{\fMALA}(\xrm,\xi_i)}}} \leq d^{-1/40} M_1 \eqsp,
\]
and \eqref{eq:lem_taylor_lim_d_infty_c_j} follows. It remains to show \eqref{eq:lem_taylor_lim_d_infty_c_11}. By definition, $C_{11}$ is the remainder in the eleventh order  expansion in $\sigma_d := \sqrt{h_d}$  given by \eqref{eq:lem_taylor_expansion_form} of the function $\Theta$ defined by $\Theta(\varx_i,\xi_i,\sigma_d) = \log(\pi_1(y_i^{\fMALA} ) q^{\fMALA}_1(y_i^{\fMALA},x_i)) - \log(\pi_1(x_i) q_1^{\fMALA}(x_i,y_i^{\fMALA}))$. Therefore, by the mean-value form of the remainder, there exists $u_d \in \ccint{0,\sigma_d}$ such that
\[
C_{11}(x_i,\xi_i,h_d) = (\sigma_d^{11}/(11!)) \frac{\partial ^{11} \Theta}{\partial \sigma_d^{11}}(x_i,\xi_i,u_d) \eqsp.
\]
By Assumption \ref{hyp:log_density}-\eqref{item:hyp_log_density_lipschitz_Cten} which implies that $g''$ is bounded, and Assumption \ref{hyp:log_density}-\eqref{item:hyp_log_density_polynomial}, for all $u_d \in \ccint{0,\sigma_d}$, the eleventh derivative of $\Theta$ with respect to $\sigma_d$, taken in $(x_i,\xi_i,u_d)$,  can be bounded by a positive polynomial in $(\varx_i,\xi_i)$ on the form  $\Prm_2(\varx_i) \Prm_3(\xi_i)$. Hence, there exists a constant $M_2$ such that
\begin{equation}
\label{eq:lem_taylor_expansion_c_11_bound1}
\expeLaw{\xi_i} {\abs{C_{11}(\varx_i,\xi_i,h_d)}} \leq M_2 \ d^{-11/10} \ \Prm_2(\varx_i) \eqsp.
\end{equation}
And if we define
\[
F_{d,11}^1 = \defSet{x \in \rset^d \ ; \ \abs{\sum_{i=2}^d \Prm_2(\varx_i) - \expeLaw{\xrm}{\Prm_2(\xrm)}} \leq d} \eqsp,
\]
then we have by the Chebychev inequality, this definition  and \eqref{eq:lem_taylor_expansion_c_11_bound1}
\begin{align*}
  d^{1/5} \pi_d((F_{d,11}^1)^c) & \leq \Var\left[ \Prm_2(\xrm) \right] d^{-4/5} \\
  \sup_{\varx \in F_{d,11}^1 } \sum_{i=2}^d   \expeLaw{\xi_i}{\abs{C_{11}(\varx_i,\xi_i,h_d)}}
&\leq M_2   (\expeLaw{\xrm}{\Prm_2(\varx)} +1)d^{-1/10} \eqsp.
\end{align*}
These results, combined with Assumption \ref{hyp:log_density}-\eqref{item:hyp_log_density_moments}, imply $\lim_{d \to \plusinfty} d^{1/5} \pi_d((F_{d,11}^1)^c) =0$ and \eqref{eq:lem_taylor_lim_d_infty_c_11}. Finally, $F_{d}^1 = \bigcap_{j=6}^{11} F_{d,j}^1$ satisfies the claimed properties of the Lemma, and \eqref{eq::lem_taylor_lim_d_infty_accept_rate} directly follows from all the previous results.
\end{proof}
To isolate the first component of the process $\procLine^{d, \fMALA}$, we consider the modified generators defined for $\testFunction \in C^2_c(\rset ^d)$ and $\varx \in \rset^d$ by
\[
\tilde{A}_{d}^{\fMALA}\testFunction(\varx) = d^{1/5} \expeLaw{\vary^{\fMALA}}{(\testFunction(\vary^{\fMALA})-\testFunction(\varx))\alpha_{-1,d}^{\fMALA}(\varx,\vary^{\fMALA})}
\]
where for all $x,y \in \rset^d$,
\[
\alpha_{-1,d}^{\fMALA} (\varx,\vary) = \prod_{i=2}^d \frac{\pi_1(\vary_i)q_{1,\fMALA}(\vary_i,\varx_i)}{\pi_1(\varx_i)q_{1,\fMALA}(\varx_i,\vary_i)} \eqsp.
\]
The next lemma shows that we can approximate $A_d^{\fMALA}$ by $\tilde{A}_{d}^{\fMALA}$, and thus, in essence, the first component becomes ``asymptotically independent'' from the others.

\begin{theorem}
\label{theo:approxAd}
There exists a sequence of sets $\{ F_d^2 \subset \rset^d, d \in \nset^* \}$ such that $\lim_{d \to \plusinfty} d^{1/5} \pi_d((F_d^2)^c) =0$ and  for all $\testFunction \in C_c^{\infty}(\rset)$ (seen as function of $\rset^d$ for all $d$ which only depends on the first component):
\begin{equation*}
%\label{eq:theo_approx_AdOne}
\lim_{d \to \plusinfty} \sup_{\varx \in F^{2}_d } \abs{A_d^{\fMALA}\testFunction(\varx) - \tilde{A}_{d}^{\fMALA}\testFunction(\varx) } = 0 \eqsp.
\end{equation*}
In addition,
\begin{equation}
\label{eq:theo_approx_AdTwo}
\lim_{d \to \plusinfty} \sup_{\varx \in F_d^2} d^{1/5} \expeLaw{\vary^{\fMALA}}{\abs{\alpha_d^{\fMALA}(\varx,\vary^{\fMALA}) - \alpha_{-1,d}^{\fMALA}(\varx,\vary^{\fMALA})}} = 0 \eqsp.
\end{equation}
\end{theorem}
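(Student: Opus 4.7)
My plan is to reduce the difference $A_d^{\fMALA}\testFunction(\varx) - \tilde{A}_d^{\fMALA}\testFunction(\varx)$ to a product of two small factors: the displacement of the first coordinate under the proposal, and the contribution of the first coordinate to the log acceptance ratio. Since $\testFunction$ depends only on the first coordinate, we can write
\[
A_d^{\fMALA}\testFunction(\varx) - \tilde{A}_d^{\fMALA}\testFunction(\varx) = d^{1/5} \expeLaw{\vary^{\fMALA}}{(\testFunction(\vary^{\fMALA}_1) - \testFunction(\varx_1))(\alpha_d^{\fMALA}(\varx, \vary^{\fMALA}) - \alpha_{-1,d}^{\fMALA}(\varx, \vary^{\fMALA}))} \eqsp,
\]
and the second claim \eqref{eq:theo_approx_AdTwo} requires controlling only the difference of acceptance probabilities. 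I will take $F_d^2 = F_d^1 \cap \{\varx \in \rset^d : \abs{\varx_1} \leq \log d\}$, where $F_d^1$ is the set provided by Lemma~\ref{lem:taylor_expansion}. Since $\pi_1$ has moments of all orders by Assumption~\ref{hyp:log_density}-\ref{item:hyp_log_density_moments}, we have $\pi_d(\abs{\varx_1} > \log d) = o(d^{-1/5})$, so the $\pi_d$-negligibility property of $F_d^1$ is preserved.

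The first key estimate is the Lipschitz bound $\abs{1\wedge \rme^a - 1 \wedge \rme^b} \leq \abs{a-b}$, which gives
\[
\abs{\alpha_d^{\fMALA}(\varx, \vary^{\fMALA}) - \alpha_{-1,d}^{\fMALA}(\varx, \vary^{\fMALA})} \leq \abs{\rrm_1(\varx_1, \vary^{\fMALA}_1)} \eqsp,
\]
where $\rrm_1(\varx_1, \vary_1) = \log(\pi_1(\vary_1) q_{1,\fMALA}(\vary_1, \varx_1) / (\pi_1(\varx_1) q_{1,\fMALA}(\varx_1, \vary_1)))$ is the log-ratio for the first coordinate alone (treating $\alpha_{-1,d}^{\fMALA}$ as capped by one where appropriate). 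A one-dimensional version of the Taylor expansion in Lemma~\ref{lem:taylor_expansion} applied to $\rrm_1$ yields $\rrm_1 = \sum_{j=5}^{10} C_j^{\fMALA}(\varx_1,\GaussianRV_1) d^{-j/10} + C_{11}^{\fMALA}(\varx_1,\GaussianRV_1,h_d)$, so that thanks to Assumption~\ref{hyp:log_density} the moments $\expeLaw{\GaussianRV_1}{\abs{\rrm_1}^k}$ are bounded by $d^{-k/2}$ times a polynomial in $\varx_1$. On $F_d^2$ this polynomial is bounded by $\mathrm{polylog}(d)$, giving the uniform bound $\expeLaw{\GaussianRV_1}{\abs{\rrm_1}} = \bigO(d^{-1/2} \mathrm{polylog}(d))$. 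Multiplying by $d^{1/5}$ establishes \eqref{eq:theo_approx_AdTwo}.

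For the first assertion, I combine the previous estimate with the mean-value bound $\abs{\testFunction(\vary^{\fMALA}_1) - \testFunction(\varx_1)} \leq \norm{\testFunction'}_\infty \abs{\vary^{\fMALA}_1 - \varx_1}$. From the definition \eqref{eq:new_proposal}--\eqref{eq:new_proposal_2} of the fMALA proposal, $\vary^{\fMALA}_1 - \varx_1 = h_d^{1/2} \GaussianRV_1 + \bigO(h_d)$ times polynomials in $\varx_1$ evaluated at derivatives of $g$, so $\expeLaw{\GaussianRV_1}{\abs{\vary^{\fMALA}_1 - \varx_1}^2}^{1/2} = \bigO(d^{-1/10} \mathrm{polylog}(d))$ on $F_d^2$. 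Applying Cauchy--Schwarz gives
\[
\abs{A_d^{\fMALA}\testFunction(\varx) - \tilde{A}_d^{\fMALA}\testFunction(\varx)} \leq d^{1/5} \norm{\testFunction'}_\infty \expeLaw{\GaussianRV_1}{\abs{\vary^{\fMALA}_1 - \varx_1}^2}^{1/2}\expeLaw{\GaussianRV_1}{\rrm_1^2}^{1/2} = \bigO(d^{-2/5} \mathrm{polylog}(d)) \eqsp,
\]
which tends to zero uniformly on $F_d^2$. The main technical obstacle is keeping all bounds uniform in $\varx$: the naive bounds on $\rrm_1$ and $\vary^{\fMALA}_1 - \varx_1$ involve derivatives of $g$ at $\varx_1$ which have only polynomial growth, so the restriction of $F_d^2$ to a slowly growing window $\abs{\varx_1} \leq \log d$ (combined with $F_d^1$) is essential. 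A secondary point to verify carefully is the single-coordinate analogue of the Taylor expansion in Lemma~\ref{lem:taylor_expansion}, but since the computation is exactly the $d=1$ case of the one already carried out in the supplementary Mathematica file, no new calculation is needed.
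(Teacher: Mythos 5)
Your overall strategy coincides with the paper's: both proofs reduce the difference of acceptance probabilities to the first-coordinate log-ratio via the $1$-Lipschitz property of $t \mapsto 1 \wedge \rme^t$, show that this log-ratio is $\bigO(\sigma_d^5) = \bigO(d^{-1/2})$ times a polynomial $\Prm_1(\varx_1)$ by a fifth-order Taylor expansion (the paper uses the mean-value remainder directly rather than the full expansion of Lemma~\ref{lem:taylor_expansion}, but this is immaterial), and then restrict to a set on which $\Prm_1(\varx_1)$ is controlled. For the first display the paper is even simpler than your Cauchy--Schwarz argument: it just bounds $\abs{\testFunction(\vary^{\fMALA}) - \testFunction(\varx)} \leq 2\norm{\testFunction}_\infty$ and reduces everything to \eqref{eq:theo_approx_AdTwo}, so no estimate on $\vary_1^{\fMALA} - \varx_1$ is needed.

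There is, however, one genuine gap: your choice of window $\{\abs{\varx_1} \leq \log d\}$ does not satisfy the required property $\lim_{d\to\plusinfty} d^{1/5}\,\pi_d((F_d^2)^c) = 0$. Assumption~\ref{hyp:log_density} only guarantees that $\pi_1$ has finite polynomial moments; Markov's inequality then gives $\pi_1(\abs{\varx_1} > \log d) \leq \expeLaw{\xrm}{\abs{\xrm}^k}(\log d)^{-k}$, which decays only polylogarithmically and is \emph{not} $o(d^{-1/5})$ for any fixed $k$. A concrete counterexample consistent with all of Assumption~\ref{hyp:log_density} is $g(t) = -(\log(1+t^2))^2$: here $g''$ is bounded, all derivatives have polynomial growth, all moments are finite, yet $\pi_1(\abs{\xrm} > \log d) \asymp \exp(-4(\log\log d)^2)$, so $d^{1/5}\pi_1(\abs{\xrm}>\log d) \to \plusinfty$. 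The fix is to let the window grow polynomially: take $F_d^2 = \{\varx : \Prm_1(\varx_1) \leq d^{1/10}\}$ as in the paper (so that $d^{1/5}\pi_d((F_d^2)^c) \leq d^{-1/10}\expeLaw{\xrm}{\Prm_1(\xrm)^3}$ by Markov's inequality), or equivalently $\{\abs{\varx_1}\le d^{\epsilon}\}$ with $\epsilon>0$ chosen small enough after $\Prm_1$ is fixed; with this change your bounds $d^{1/5-1/2}\Prm_1(\varx_1) \leq \ell^{5/2} d^{-1/5}$ still go to zero uniformly and the rest of your argument goes through.
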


\begin{proof}
Using that $\testFunction$ is bounded and the Jensen inequality, there exists a constant $M_1$ such that
\[
\abs{A_d^{\fMALA}\testFunction(\varx) - \tilde{A}_{d}^{\fMALA}\testFunction(\varx) } \leq M_1 d^{1/5}\expeLaw{\vary^{\fMALA}}{\abs{\alpha_d^{\fMALA}(\varx,\vary^{\fMALA}) - \alpha_{-1,d}^{\fMALA}(\varx,\vary^{\fMALA})}}.
\]
Thus it suffices to show \eqref{eq:theo_approx_AdTwo}.
Set $\sigma_d = \sqrt{h_d}$.
Since $t \mapsto 1 \wedge \exp(t)$ is $1$-Lipschtz on $\rset$ and, by definition we have
\begin{equation}
\label{eq:proof_theo_approxAd}
 d^{1/5}\expeLaw{\vary^{\fMALA}}{\abs{\alpha_d^{\fMALA}(\varx,\vary^{\fMALA}) - \alpha_{-1,d}^{\fMALA}(\varx,\vary^{\fMALA})}}  \leq  d^{1/5} \expeLaw{\xi_1}{\abs{\Theta(\varx_1,\xi_1,\sigma_d)}} \eqsp,
\end{equation}
where $\Theta(\varx_1,\xi_1,\sigma_d) = \log(\pi_1(\vary_1^{\fMALA})q_1^{\fMALA}(\vary_1^{\fMALA},\varx_1)) - \log(\pi_1(\varx_1)q_1^{\fMALA}(\varx_1,\vary^{\fMALA}_1))$. By a fifth order Taylor expansion of $\Theta$ in $\sigma_d$, and since by \eqref{eq:lem_taylor_expansion_form} $\partial^j \Theta(\varx_1,\xi_1,0)/(\partial \sigma_d^j) = 0$ for $j=0 \cdots 4$, we have
\[
\Theta(\varx_1,\xi_1,\sigma_d) = \frac{\partial^5 \Theta}{\partial \sigma_d^5}(\varx_1,\xi_1,u_d) (\sigma_d^5/5!) \eqsp,
\]
for some $u_d \in \ccint{0,\sigma_d}$.
Using Assumption \ref{hyp:log_density}-\eqref{item:hyp_log_density_lipschitz_Cten}-\eqref{item:hyp_log_density_polynomial}, and an explicit expression of $\partial^j \Theta(\varx_1,\xi_1,u_d)/(\partial \sigma_d^j)$, there exists two positive polynomials $\Prm_1$ and $\Prm_2$ such that
\[
\abs{\Theta(\varx_1,\xi_1,\sigma_d)} \leq (\sigma_d^5/5!) \Prm_1(\varx_1) \Prm_2(\xi_1) \eqsp.
\]
Plugging this result in \eqref{eq:proof_theo_approxAd} and since $\sigma_d^5 = \ell^{5/2} d^{-1/2}$, we get
\[
 d^{1/5}\expeLaw{\vary^{\fMALA}}{\abs{\alpha_d^{\fMALA}(\varx,\vary^{\fMALA}) - \alpha_{-1,d}^{\fMALA}(\varx,\vary^{\fMALA})}}  \leq  \ell^{5/2} d^{-3/10} \Prm_1(\varx_1) \eqsp.
\]
Setting $F^2_d = \{ \varx \in \rset^d \ ; \ \Prm_1(\varx_1) \leq d^{1/10} \}$, we have
\[
\sup_{\varx \in F^2_d}d^{1/5}\expeLaw{\vary^{\fMALA}}{\abs{\alpha_d^{\fMALA}(\varx,\vary^{\fMALA}) - \alpha_{-1,d}^{\fMALA}(\varx,\vary^{\fMALA})}}  \leq \ell^{5/2} d^{-1/5} \eqsp,
\]
and \eqref{eq:theo_approx_AdTwo} follows. Finally, $F^2_d$ satisfied $\lim_{d \to \plusinfty}d^{1/5} \pi_d((F_d^2)^c) = 0$ since by the Markov inequality
\[
d^{1/5} \pi_d((F_d^2)^c) \leq d^{-1/10} \expeLaw{\xrm}{\Prm_1(\xrm)^3} \eqsp,
\]
where $\expeLaw{\xrm}{\Prm_1(\xrm)^3}$ is finite by
Assumption \ref{hyp:log_density}-\eqref{item:hyp_log_density_moments}.
\end{proof}
\begin{lemma}
\label{lem:approx_A_langevin}
For all $\testFunction \in C^\infty_c(\rset)$,
\[
\lim_{d \to \plusinfty} \sup_{\varx_1 \in \rset} \abs{d^{1/5} \expeLaw{y_1^{\fMALA}}{\testFunction(y_1^{\fMALA})-\testFunction(x_1)} - (\ell^2/2)(\testFunction'(\varx_1)f(\varx_1) + \testFunction''(\varx_1)) } = 0 \eqsp.
\]
\end{lemma}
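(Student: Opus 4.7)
The plan is to Taylor expand $\testFunction$ about $x_1$ and use the explicit form of the fMALA proposal from \eqref{eq:fMALA}. Restricting to one coordinate in the product case with $\Sigma = \Id$, write
\[
y_1^{\fMALA} - x_1 = a(x_1, h_d) + b(x_1, h_d)\, \xi_1, \quad \xi_1 \sim \mathcal{N}(0,1),
\]
where $a(x_1,h_d) = \tfrac{h_d}{2} f(x_1) - \tfrac{h_d^2}{24}\bigl(f'(x_1) f(x_1) + f''(x_1)\bigr)$ and $b(x_1,h_d) = h_d^{1/2}\bigl(1 + \tfrac{h_d}{12} f'(x_1)\bigr)$. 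By Assumption~\ref{hyp:log_density}-\ref{item:hyp_log_density_lipschitz_Cten}, $f' = g''$ is bounded, so $f$ is globally Lipschitz and $b(x_1,h_d) = h_d^{1/2}(1 + O(h_d))$ uniformly in $x_1$; by Assumption~\ref{hyp:log_density}-\ref{item:hyp_log_density_polynomial}, the other derivatives of $g$ entering $a$ grow at most polynomially in $x_1$.

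Next, I would apply a third-order Taylor expansion of $\testFunction$ about $x_1$ with a Lagrange remainder, and take expectation over $\xi_1$. Using $\mathbb{E}[\xi_1^{2k+1}] = 0$ and $\mathbb{E}[\xi_1^2] = 1$ yields
\[
\mathbb{E}\bigl[\testFunction(y_1^{\fMALA}) - \testFunction(x_1)\bigr] = \testFunction'(x_1)\, a + \tfrac{1}{2}\testFunction''(x_1)(a^2+b^2) + \tfrac{1}{6}\testFunction'''(x_1)(a^3+3ab^2) + \mathbb{E}[R],
\]
with $|R| \le \tfrac{1}{24}\|\testFunction^{(4)}\|_\infty (a+b\xi_1)^4$. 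Substituting the explicit form of $a$ and $b$, collecting the $O(h_d)$ contributions, and using $d^{1/5} = \ell^2/h_d$ produces $(\ell^2/2)(\testFunction'(x_1) f(x_1) + \testFunction''(x_1))$ plus an error term bounded by $C h_d(1 + |x_1|^r)$ for some $r$ depending on the polynomial-growth exponents.

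The main obstacle is upgrading this pointwise bound to uniform convergence over $x_1 \in \rset$, since $|x_1|^r$ is unbounded. To handle this I would exploit $\testFunction \in C_c^\infty(\rset)$ by fixing $R$ with $\mathrm{supp}(\testFunction^{(k)}) \subset [-R,R]$ for all $k \le 4$, and splitting into $\{|x_1| \le 2R\}$ and its complement. On $\{|x_1| \le 2R\}$ the polynomial factor $|x_1|^r$ is bounded and the $O(h_d)$ error is uniform. On $\{|x_1| > 2R\}$ the target limit vanishes since $\testFunction^{(k)}(x_1) = 0$ for $k = 0, 1, 2$, and it remains to bound $d^{1/5} \mathbb{E}[\testFunction(y_1^{\fMALA})]$; since $\testFunction$ is supported in $[-R,R]$, this reduces to controlling the Gaussian event $\{|y_1^{\fMALA}| \le R\}$ via $\mathbb{P}(|\xi_1| \ge (|x_1| - R - |a(x_1,h_d)|)/|b(x_1,h_d)|)$. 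Using the Lipschitz bound on $f$ together with $b = O(\sqrt{h_d})$ uniformly, one shows that for $h_d$ small the drift $a(x_1)$ is dominated by $|x_1|/4$ on the relevant range, so this tail probability decays at least like $\exp(-c|x_1|^2/h_d)$; multiplied by $d^{1/5} = \ell^2/h_d$ this still tends to zero uniformly. The delicate part of the argument is ruling out a possible resonance in which the $O(h_d^2)$ correction $(h_d^2/24) f''(x_1)$ inside $a(x_1)$ could cancel $x_1$ and bring $y_1^{\fMALA}$ back into $\mathrm{supp}(\testFunction)$, which requires careful use of the polynomial-growth bounds of Assumption~\ref{hyp:log_density} together with the smallness of $h_d$.
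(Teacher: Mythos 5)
Your computation of the leading term is correct, and your route --- expanding $\testFunction$ around $x_1$ in powers of $y_1^{\fMALA}-x_1$ and then splitting $\rset$ into $\{\abs{x_1}\le 2R\}$ and its complement --- is genuinely different from the paper's. The paper sets $\sigma_d=\sqrt{h_d}$, views $W(x_1,\xi_1,\sigma_d)=\testFunction(y_1^{\fMALA})$ as a function of $\sigma_d$, Taylor expands to third order at $\sigma_d=0$, and disposes of the remainder in one line by asserting that $\partial^3 W/\partial \sigma_d^3$ is uniformly bounded because $\testFunction\in C_c^\infty(\rset)$; no splitting of the real line occurs.

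The genuine gap is the step you yourself flag on $\{\abs{x_1}>2R\}$, and it is not merely delicate: as described it would fail under Assumption 2 alone. Your tail bound $\exp(-c\abs{x_1}^2/h_d)$ rests on the drift $a(x_1,h_d)$ being dominated by $\abs{x_1}/4$ on that region. This holds for the term $(h_d/2)f(x_1)$, since $f'=g''$ bounded makes $f$ globally Lipschitz, but not for the term $(h_d^2/24)f''(x_1)=(h_d^2/24)g'''(x_1)$: Assumption 2 only controls $g'''$ by $C(1+\abs{t}^{\kappa})$ with $\kappa$ arbitrary, and boundedness of $g''$ does not force $g'''$ to grow at most linearly (take $g''(t)=-2+\sin(t^3)$, so that $g'''(t)=3t^2\cos(t^3)$). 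For such a $g$ one can find, for each small $h_d$, points $x_1$ of order $h_d^{-2}$ at which $\mu^{\fMALA}(x_1,h_d)$ re-enters $[-R,R]$; there $\mathbb{P}\left(\abs{y_1^{\fMALA}}\le R\right)$ stays bounded away from $0$, the standard deviation is only of order $\sqrt{h_d}$, and $d^{1/5}\,\E\left[\testFunction(y_1^{\fMALA})\right]$ is not small. So the ``resonance'' cannot be excluded from the stated hypotheses, and the supremum over all of $\rset$ needs either an argument you have not supplied or a restriction of $x_1$ to sets like $F_d^1$ on which polynomial functionals of the coordinates are controlled (which is how the lemma is actually invoked in the proof of Theorem 3.2). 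It is worth noting that the paper's own boundedness claim for $\partial^3 W/\partial\sigma_d^3$ is exposed to exactly the same issue --- the chain-rule factors $\partial^k y_1^{\fMALA}/\partial\sigma_d^k$ involve $g'$, $g''g'+g'''$ and $\xi_1$ and are not bounded in $(x_1,\xi_1)$ even on the event $\{y_1^{\fMALA}\in\operatorname{supp}\testFunction\}$ --- so your proposal has the merit of making explicit where the real work lies, but as written it does not complete the proof.
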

\begin{proof}
Consider $\sigma_d = \sqrt{h_d}$ and $W(\varx_1, \xi_1 , \sigma_d) =
\testFunction(y^{\fMALA}_1)$. Note that $W(\varx_1, \xi_1 , 0) = \testFunction(\varx_1)$. Then using that $\testFunction \in
C^\infty_c(\rset)$, a third order Taylor expansion of this function in $\sigma_d$ implies
there exists $u_d \in \ccint{0,h_d}$ and $M_1 \geq 0$ such that
\begin{multline*}
\expeLaw{\xi_1}{W(\varx_1,\xi_1,\sigma_d) - \testFunction(\varx_1) } = (\ell^2 d^{-1/5}
/2)(\testFunction'(\varx_1)f(\varx_1) + \testFunction''(\varx_1))+M_1 d^{-3/10}\\ + \frac{\partial^3 W}{\partial \sigma_d^3}(x_1,\xi_1,u_d)\sigma_d^3 \eqsp.
\end{multline*}
Moreover since $\testFunction \in C^\infty_c(\rset)$, the third partial derivative of $W$ in
$\sigma_d$ are bounded for all $\varx_1$, $\xi_1$ and $\sigma_d$. Therefore there exists
$M_2 \geq 0$ such that  for all $\varx_1 \in \rset$,
\[
\abs{d^{1/5} \expeLaw{y_1^{\fMALA}}{\testFunction(y_1^{\fMALA})-\testFunction(x_1)} - (\ell^2/2)(\testFunction'(\varx_1)f(\varx_1) + \testFunction''(\varx_1)) } \leq M_2 \ell^{3/2} d^{-1/10} \eqsp,
\]
which concludes the proof.
\end{proof}

As in \cite{RR98}, we prove a uniform central limit theorem for the sequence of random variables defined for $i \geq 2$ and $\varx_i \in \rset$ by
$C_5^{\fMALA}(\varx_i,\xi_i)$. Define now for $d \geq 2$ and  $\varx \in \rset^d$,
\[
\Wbar_d(  \varx) = n^{-1/2} \sum_{i=2}^d C_5^{\fMALA}(\varx_i,\xi_i) \eqsp,
 \]
and the characteristic function of $\Wbar_d$ for $t \in \rset$ by
\[
\varphi_d(\varx,t) = \mathbb{E}[\rme^{\rmi t \Wbar_d(\varx)}] \eqsp.
\]
Finally define the characteristic function of the zero-mean Gaussian distribution with
standard deviation $\ell^5 K^{\fMALA}$, given in Lemma \ref{lem:taylor_expansion}, by: for $t \in \rset$,
\[
\varphi(t) = \rme^{-(\ell^{5} K^{\fMALA}t)^2 /2} \eqsp.
\]
\begin{lemma}
\label{lem:unif_TCL}
 There exists a sequence of set $\{F_d^3 \subset \rset^d , d \in \nset^* \}$, satisfying $\lim_{d \to \plusinfty} d^{1/5}\pi_d((F_d^3)^c) = 0$ and we have the following properties:
 \begin{enumerate}[(i)]
  \item  \label{item:unif_TCL1}
  for all $t \in \rset$, $\lim_{d \to \plusinfty} \sup_{\varx \in F_d^3} \abs{\varphi_d(x,t) - \varphi(t)} = 0$,
  \item \label{item:unif_TCL2}
  for all bounded continuous function $\brm : \rset \to \rset$,
  \[
    \hspace{-1cm}
   \lim_{d \to \plusinfty} \sup_{\varx \in F^3_d} \abs{\mathbb{E}\left[ \brm \left( \Wbar_d(\varx) \right) \right] - (2 \uppi \ell^{10} (K^{\fMALA})^2)^{-1/2} \int_{\rset } \brm(u) \rme^{-u^2/(2\ell^{10} (K^{\fMALA})^2)} \rmd u} = 0 \eqsp.
  \]
\end{enumerate}
In particular, we have
\[
    \lim_{d \to \plusinfty} \sup_{\varx \in F^3_d} \abs{\mathbb{E}\left[ 1 \wedge \rme^{\Wbar_d(\varx) -\ell^{10} (K^{\fMALA})^2/2 } \right] - 2 \Phi(\ell^5 K^{\fMALA}/2)} =0 \eqsp.
\]
\end{lemma}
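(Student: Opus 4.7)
The plan is to construct $F_d^3 \subset \rset^d$ as a good set on which the empirical second and absolute third moments of the summands $C_5^{\fMALA}(\varx_i,\xi_i)$ concentrate around their $\pi_1$-expectations. Writing $\Varfun_5^{\fMALA}(x) = \Var_\xi[C_5^{\fMALA}(x,\xi)]$ and $\Mrm_5^{\fMALA}(x) = \mathbb{E}_\xi[|C_5^{\fMALA}(x,\xi)|^3]$, I would take
\[
F_d^3 = \left\{\varx \in \rset^d : \left| \sum_{i=2}^d \Varfun_5^{\fMALA}(\varx_i) - (d-1)\ell^{10}(K^{\fMALA})^2 \right| \leq d^{23/40}, \ \sum_{i=2}^d \Mrm_5^{\fMALA}(\varx_i) \leq 2d\, \mathbb{E}_{\xrm}[\Mrm_5^{\fMALA}(\xrm)] \right\}.
\]
By \eqref{eq:lem_taylor_expansion_expe2} together with Chebyshev's inequality applied to each i.i.d.\ sum under $\pi_d$, and using that Assumption~\ref{hyp:log_density}-(ii) bounds all the needed moments of $\Varfun_5^{\fMALA}(\xrm)$ and $\Mrm_5^{\fMALA}(\xrm)$, one obtains $\lim_{d \to \plusinfty} d^{1/5}\pi_d((F_d^3)^c) = 0$ by the same computation that produced $F_d^1$ in the proof of Lemma~\ref{lem:taylor_expansion}.

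For (i), fix $\varx \in F_d^3$ and $t \in \rset$. Since $C_5^{\fMALA}(\varx_i,\cdot)$ involves only odd powers of $\xi$, each $\eta_i := d^{-1/2}C_5^{\fMALA}(\varx_i,\xi_i)$ has mean zero, and the $\eta_i$ are independent under the product law of the $\xi_i$. I would expand the factors of $\varphi_d(\varx,t) = \prod_{i=2}^d \mathbb{E}[\exp(\rmi t \eta_i)]$ via $\mathbb{E}[\exp(\rmi t \eta_i)] = 1 - (t^2/2)\mathbb{E}[\eta_i^2] + R_i(\varx_i,t)$ with $|R_i(\varx_i,t)| \leq |t|^3 \mathbb{E}[|\eta_i|^3]/6$, then take logarithms. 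The variance sum is controlled by the first defining condition of $F_d^3$ (error of order $d^{-17/40}$), and the remainder sum is bounded by $|t|^3 d^{-3/2}\sum_i \Mrm_5^{\fMALA}(\varx_i)/6 = O(d^{-1/2})$ by the second condition, so $\sum_i \log(1 - (t^2/2)\mathbb{E}[\eta_i^2] + R_i) \to -(t^2/2)\ell^{10}(K^{\fMALA})^2$ uniformly over $\varx \in F_d^3$. Exponentiating yields (i).

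For (ii), the uniform variance bound on $F_d^3$ gives uniform tightness of $\{\mathcal{L}(\Wbar_d(\varx)) : \varx \in F_d^3, d \geq 2\}$ through Chebyshev. Given bounded continuous $\brm$, approximate $\brm$ uniformly on a large ball by trigonometric polynomials and use the tightness tail bound off the ball; the remaining term becomes a finite linear combination of $|\varphi_d(\varx,t) - \varphi(t)|$, which vanishes uniformly by (i). The concluding identity then follows from applying (ii) to the bounded continuous function $\brm(u) = 1 \wedge \exp(u - \ell^{10}(K^{\fMALA})^2/2)$, together with the classical Gaussian computation $\mathbb{E}[1 \wedge \exp(W - v/2)] = 2\Phi(-\sqrt{v}/2)$ for $W \sim \mathcal{N}(0,v)$, obtained by splitting the integral at $w = v/2$ and completing the square on the exponential branch.

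The main obstacle is uniformity of the CLT over the $d$-dependent set $F_d^3$: qualitative Lindeberg arguments do not suffice, and one must use a quantitative characteristic-function bound whose error is expressible \emph{solely} through the two empirical quantities used to define $F_d^3$. A secondary subtlety is calibrating the exponent $23/40$ in the variance concentration so that the Chebyshev bound still beats the inflation factor $d^{1/5}$; this forces control of the fourth moment of $\Varfun_5^{\fMALA}(\xrm)$, for which the polynomial-growth component of Assumption~\ref{hyp:log_density} was tailored.
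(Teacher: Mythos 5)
Your proof is correct and shares the paper's overall skeleton (a concentration set for empirical moments of $C_5^{\fMALA}$, a uniform characteristic-function CLT on that set, then the transfer to bounded continuous test functions and the explicit Gaussian computation $2\Phi(-\ell^5K^{\fMALA}/2)$), but it differs in two technical implementations. For part (i), you control the characteristic-function remainders by the absolute third moments $\Mrm_5^{\fMALA}$ and then take logarithms; the paper instead uses the bound $\abs{\rme^{\rmi\theta}-1+\theta^2/2}\leq \min(\abs{\theta}^3/6,\theta^2)$ together with a $\delta$-truncation, so that only second and fourth moments of $C_5^{\fMALA}$ enter, and it multiplies out the factor-by-factor errors via the elementary product inequality rather than taking logs. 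Your route is equally valid here (all polynomial moments are finite under Assumption~\ref{hyp:log_density}), but note that to justify the logarithm you need each individual factor close to $1$ uniformly in $i$; this does follow from your third-moment condition (a sum of nonnegative terms bounded by $2d\,\mathbb{E}[\Mrm_5^{\fMALA}]$ bounds each term, whence $\mathbb{E}[\eta_i^2]=O(d^{-1/3})$ by Lyapunov), which is exactly the role played by the paper's separate per-coordinate set $F_{d,2}^3$, so you should make that step explicit. For part (ii), you prove a uniform version directly via tightness and trigonometric-polynomial approximation, whereas the paper uses a lighter extraction argument: pick a near-maximizing sequence $\varx^d\in F_d^3$, apply (i) along it, and invoke L\'evy's continuity theorem. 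Your version is more self-contained and quantitative; the paper's is shorter. Neither difference creates a gap.
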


\begin{proof}
 We first define for all $d \geq 1$, $F_d^3 = F_{d,1}^3 \cap F_{d,2}^3$ where
 \begin{align}
 \label{eq:defFd31}
 F_{d,1}^3 & = \bigcap_{j=2,4} \defSet{\varx \in \rset^d \ ; \abs{\ d^{-1} \sum_{i=2}^d \expeLaw{\xi_i}{ C_5^{\fMALA}(\varx_i,\xi_i)^j}  - \expeLaw{\xi_1}{C_5^{\fMALA}(\xrm_1,\xi_1 )^j}   } \leq d^{-1/4}} \eqsp,  \\
 \label{eq:defFd32}
 F_{d,2}^3 & = \defSet{\varx \in \rset^d \ ; \ \expeLaw{\xi_i}{C_5^{\fMALA}(\varx_i,\xi_i)^2} \leq d^{3/4} \, \  \forall i \in \{ 2, \cdots , d \} } \eqsp.
 \end{align}
It follows from \eqref{eq:moment_bound_Cj_Vj}, and the Chebychev and Markov inequalities that there exists a constant $M$ such that $\pi_d((F^3_{d,1})^c)+\pi_d((F^3_{d,2})^c) \leq M d^{-1/2}$.
Therefore $\lim_{d\to \plusinfty}d^{1/5} \pi_d((F^3_d)^c) = 0$. \\
\eqref{item:unif_TCL1}. Let $t \in \rset$ and $\varx \in F_d^3$ and denote
\[
 \Varfun(\varx_i) = \Var[C_5^{\fMALA}(x_i,\xi_i)] = \expeLaw{\xi_i}{C_5^{\fMALA}(\varx_i,\xi_i)^2} \eqsp,
\]
where the second equality follows from Lemma \ref{lem:taylor_expansion}.
By the triangle inequality
\begin{multline}
\label{eq:unif_TCL_first_ineq_carac_fun}
\abs{\varphi_d(\varx,t) - \varphi(t)} \leq \abs{\varphi_d(\varx,t) - \prod_{i=2}^d
  \bracket{1- \frac{\ell^{10}\Varfun(\varx_i)t^2}{2d} }} \\+ \abs{\prod_{i=2}^d
  \bracket{1- \frac{\ell^{10}\Varfun(\varx_i)t^2}{2d} } - \rme^{-\ell^{10} (K^{\fMALA})^2
    t^2 /2} } \eqsp.
\end{multline}
We bound  the two terms of the right hand side separately.
Note that by independence for all $d$, $\varphi_d(\varx,t) = \prod_{i=2}^d \varphi_1(\varx_i,t/\sqrt{d})$.
Since $\varx \in F_d^3$, by \eqref{eq:defFd32}, for $d$ large enough $\ell^{10} \Varfun(\varx_i)t^2/(2d) \leq 1 $ for all $i \in \{2,\cdots,d \}$. Thus, by   \cite[Eq. 26.5]{billingsley:1995}, we have for such large $d$, all $i \in \{2,\cdots,d\}$ and all $\delta >0$:
\begin{align*}
&\abs{\varphi_1(\varx_i,t/\sqrt{d}) -\bracket{1- \frac{\ell^{10}\Varfun(\varx_i)t^2}{2d} }}\\
 &\leq \expeLaw{\xi_i}{\bracket{\frac{\abs{t}^3\ell^{15}} {6d^{3/2}} \abs{C_5^{\fMALA}(\varx_i,\xi_i)}^3 }\wedge \bracket{\frac{t^2 \ell^{10}}{d} C_5^{\fMALA}(\varx_i,\xi_i)^2} }  \\
& \leq \expeLaw{\xi_i}{\frac{\abs{t}^3\ell^{15}} {6d^{3/2}} \abs{C_5^{\fMALA}(\varx_i,\xi_i)}^3 \1_{\{\abs{C_5^{\fMALA}(\varx_i,\xi_i)} \leq \delta d^{1/2}\}} } \\
&\qquad+\expeLaw{\xi_i}{\frac{t^2 \ell^{10}}{d} C_5^{\fMALA}(\varx_i,\xi_i)^2  \1_{\{ \abs{C_5^{\fMALA}(\varx_i,\xi_i)} > \delta d^{1/2} \}} } \\
& \leq \frac{\delta \abs{t}^3 \ell^{15}}{6d} \expeLaw{\xi_i}{  C_5^{\fMALA}(\varx_i,\xi_i)^2 } + \frac{\ell^{10}t^2}{\delta^2 d^2}  \expeLaw{\xi_i}{  C_5^{\fMALA}(\varx_i,\xi_i)^4} \eqsp,
\end{align*}
%where we used \eqref{eq:defFd31} in the last inequality.
In addition, by \cite[Lemma 1, Section 27]{billingsley:1995} and using this result we get:
\begin{align*}
\abs{\varphi_d(\varx,t) - \prod_{i=2}^d \bracket{1- \frac{\ell^{10}\Varfun(\varx_i)t^2}{2d} } } & \leq
\sum_{i=2}^d \frac{\delta \abs{t}^3 \ell^{15}}{6d} \expeLaw{\xi_i}{  C_5^{\fMALA}(\varx_i,\xi_i)^2 }\\
 &\qquad+ \frac{\ell^{10}t^2}{\delta^2 d^2}  \expeLaw{\xi_i}{  C_5^{\fMALA}(\varx_i,\xi_i)^4}  \\
&\leq \bracket{\expeLaw{\xi_1}{C_5^{\fMALA}(\xrm_1,\xi_1 )^2}  + d^{-1/4}} \ell^{15} \delta \abs{t}^3/6 \\
&+  \bracket{\expeLaw{\xi_1}{C_5^{\fMALA}(\xrm_1,\xi_1 )^4}   + d^{-1/4}} \ell^{10}t^2/(\delta^2 d) \eqsp,
\end{align*}
where the last inequality follows from $x \in F_d^3$ and \eqref{eq:defFd31}
Let now $\epsilon >0$, and choose $\delta$ small enough such that the fist term is smaller than $\epsilon/2$. Then there exists $d_0 \in \nset^*$ such that for all $d \geq d_0$, the second term is smaller than $\epsilon/2$ as well. Therefore, for $d \geq d_0$ we get
\[
\sup_{x \in F^3_d}\abs{\varphi_d(\varx,t) - \prod_{i=2}^d \bracket{1- \frac{\ell^{10}\Varfun(\varx_i)t^2}{2d} } } \leq \epsilon \eqsp.
\]
Consider now the second term of \eqref{eq:unif_TCL_first_ineq_carac_fun}, by the triangle inequality,
\begin{multline}
\label{eq:second_term}
\abs{\prod_{i=2}^d \bracket{1- \frac{\ell^{10}\Varfun(\varx_i)t^2}{2d} } - \rme^{-\ell^{10} (K^{\fMALA})^2 t^2 /2} }\\ \leq \abs{\prod_{i=2}^d \bracket{1- \frac{\ell^{10}\Varfun(\varx_i)t^2}{2d} } - \prod_{i=2}^d \rme^{-\ell^{10}\Varfun(x_i)t^2/(2d)} } \\
+ \abs{ \prod_{i=2}^d \rme^{-\ell^{10}\Varfun(x_i)t^2/(2d)} - \rme^{-\ell^{10} (K^{\fMALA})^2 t^2 /2}}  \eqsp.
\end{multline}
We deal with the two terms separatly.
First since for all $\varx_i$, $\Varfun(\varx_i) \geq 0$, we have
\[
\abs{1-\Varfun(\varx_i)\ell^{10}t^2/(2d) - \rme^{-\Varfun(\varx_i)\ell^{10}t^2/(2d) }} \leq \Varfun(\varx_i)^2\ell^{20}t^4/(8d^2)\eqsp.
\]
Using this result, \cite[Lemma 1, Section 27]{billingsley:1995} and the Cauchy-Schwarz inequality, it follows:
\begin{align}
\nonumber
&\abs{\prod_{i=2}^d \bracket{1- \frac{\ell^{10}\Varfun(\varx_i)t^2}{2d} } - \prod_{i=2}^d \rme^{-\ell^{10}\Varfun(x_i)t^2/(2d)} }\\
\nonumber
& \leq \sum_{i=2}^d
\abs{1-\Varfun(\varx_i)\ell^{10}t^2/(2d) - \rme^{-\Varfun(\varx_i)\ell^{10}t^2/(2d) }} \\
%\nonumber
\label{eq:unif_TCL_first_term_second_term}
& \leq \sum_{i=2}^d \Varfun(\varx_i)^2\ell^{20}t^4/(8d^2) 
 \leq \bracket{\expeLaw{\xi_1}{C_5^{\fMALA}(\xrm_1,\xi_1 )^4}   + d^{-1/4}} \ell^{20}t^{4}/(8d) \eqsp,
\end{align}
where the last inequality is implied by \eqref{eq:defFd31}. Finally since on $\rset_-$, $u
\mapsto \rme^u$ is $1$-Lipschitz and using \eqref{eq:defFd31}, we get
\begin{align}
\nonumber
\abs{ \prod_{i=2}^d \rme^{-\ell^{10}\Varfun(x_i)t^2/(2d)}-  \rme^{-\ell^{10} (K^{\fMALA})^2 t^2 /2}} & \leq (t^2\ell^{10}/2)\abs{\sum_{i=2}^dd^{-1}\Varfun(\varx_i) - (K^{\fMALA})^2} \\
\label{eq:unif_TCL_second_term_second_term}
& \leq t^2\ell^{10}d^{-1/4}/2 \eqsp.
\end{align}
Therefore, combining \eqref{eq:unif_TCL_first_term_second_term} and \eqref{eq:unif_TCL_second_term_second_term} in \eqref{eq:second_term}, we get:
\[
\lim_{d\to \plusinfty} \sup_{x \in F^3_d} \abs{\prod_{i=2}^d \bracket{1- \frac{\ell^{10}\Varfun(\varx_i)t^2}{2d} } - \rme^{-\ell^{10} (K^{\fMALA})^2 t^2 /2} } = 0 \eqsp,
\]
which concludes the proof of \eqref{item:unif_TCL1}.\\
 \eqref{item:unif_TCL2} 
Let $\brm : \rset \to \rset$ be a  bounded continuous function.
Consider the sequence $\{x^d \ ,  \ d \in \nset^* \}$ of elements of $F^3_d$ which satisfies for all $d \in \nset^*$,
\begin{multline}
\label{eq:unif_TCL2b}
  \sup_{\vary \in F^3_d} \abs{\mathbb{E}\left[ \brm \left( \Wbar_d(\vary) \right) \right] - (2 \uppi \ell^{10} (K^{\fMALA})^2)^{-1/2} \int_{\rset } \brm(u) \rme^{-u^2/(2\ell^{10} (K^{\fMALA})^2)} \rmd u}  \\
\leq  \abs{\mathbb{E}\left[ \brm \left( \Wbar_d(\varx^d) \right) \right] - (2 \uppi \ell^{10} (K^{\fMALA})^2)^{-1/2} \int_{\rset } \brm(u) \rme^{-u^2/(2\ell^{10} (K^{\fMALA})^2)} \rmd u} + d^{-1} \eqsp.
\end{multline}
Then using \eqref{item:unif_TCL1} and Levy's continuity theorem, we get
  \[
   \lim_{d \to \plusinfty}  \abs{\mathbb{E}\left[ \brm \left( \Wbar_d(\varx^d) \right) \right] - (2 \uppi \ell^{10} (K^{\fMALA})^2)^{-1/2} \int_{\rset } \brm(u) \rme^{-u^2/(2\ell^{10} (K^{\fMALA})^2)} \rmd u} = 0 \eqsp.
  \]
This limit and \eqref{eq:unif_TCL2b} conclude the proof.
\end{proof}

\begin{proof}[proof of Theorem \ref{theo:accept_ratio_fMALA}]
The theorem follows from Lemma \ref{lem:taylor_expansion}, \eqref{eq:theo_approx_AdTwo} in Theorem \ref{theo:approxAd} and the last statement in Lemma \ref{lem:unif_TCL}.
\end{proof}

\begin{proof}[proof of Theorem \ref{theo:scaling:fMALA}]
Consider $F_d = \bigcap_{j=1,2,3} F_d^j$, where the sets $F_d^j$ are given resp. in Lemma \ref{lem:taylor_expansion} Theorem \ref{theo:approxAd} and Lemma \ref{lem:unif_TCL}. We then obtain $\lim_{d \to \plusinfty} d^{-1/5} \pi_d((F_d)^c) = 0$ and by the union bound, for all $T \geq 0$,
\[
\lim_{d \to \plusinfty} \proba{\procLine^{d,\fMALA}_s \in F_d \ , \  \forall s \in \ccint{0,T}} = 1  \eqsp.
\]
Furthermore, combining the former results with  Lemma \ref{lem:approx_A_langevin}, we have  for all $\testFunction \in C^\infty_c(\rset)$ (seen as a function of the first component):
\[
\lim_{d \to \plusinfty} \sup_{\varx \in F_d} \abs{A^{\fMALA}_d \testFunction(\varx) - A^{\fMALA}\testFunction(\varx)} = 0 \eqsp.
\]
Then, the weak convergence follows from \cite[Corollary 8.7, Chapter 4]{ethier:kurtz:1986}.
\end{proof}

%%% Local Variables: 
%%% mode: latex
%%% TeX-master: "MCMC_final1"
%%% End: 

\section{Postponed proofs}
\label{sec:app_tech_proof}
\subsection{Proof of Lemma~\ref{propo:convergence_gaussian_proposal}}
\label{subsec:lemme_convergence_gaussian_proposal}
%\begin{proof}[Proof of Lemma~\ref{propo:convergence_gaussian_proposal}]
    By Assumption \ref{hyp:Gaussian_Markov_kernel}-\ref{hyp:density_convergence}, $\pi$ and $q$ are positive and continuous. It follows from  \cite[Lemma
    1.2]{mengersen:tweedie:1996} that  $P$ is $\Leb^d$-irreducible aperiodic, where $\Leb^d$ is the
    Lebesgue measure on $\IR^d$. In addition, all compact set $\smallSet$ such that $\Leb^d(\smallSet)
    >0$ are small for $P$. Now by \cite[Theorem 15.0.1]{meyn:tweedie:2009}, we just need
    to check the drift condition \eqref{eq:drift_condition}. But by a simple calculation, using $\alpha(x,y) \leq 1$ for all $x,y \in \rset^d$, and the Cauchy-Schwarz inequality, we get
\begin{align*}
  PV(\varx) %&\leq 1 + \norm{\mu(\varx)}^2 \int_{\IR^d} \alpha(\varx,\vary) q(\varx,\vary)
%  \rmd
%  \vary
%  + (2 \uppi)^{-d/2} \int_{\IR^d} \norm{S(\varx)
%    \GaussianRV}^2
%  \exp(-\norm{\GaussianRV}^2/2) \rmd \GaussianRV \\
%&  + 2(2 \uppi)^{-d/2} \norm{\mu(\varx)} \int_{\IR^d} \norm{S(\varx) \GaussianRV}
%  \exp(-\norm{\GaussianRV}^2/2)\rmd \GaussianRV + \norm{x}^2 \int_{\IR^d} (1-\alpha(\varx,\vary)) q(\varx,\vary) \rmd \vary\\
  & \leq 1 +  \norm{\varx}^2
  + (\norm{\mu(\varx)}^2-\norm{\varx}^2) \int_{\IR^d} \alpha(\varx,\vary) q(\varx,\vary) \rmd \vary \\
  & + (2 \uppi)^{-d/2} (2\norm{\mu(\varx)} \norm{S(\varx)} + \norm{S(\varx)}^2)\int_{\IR^d} \max(\norm{
    \GaussianRV}^2 , 1)
  \rme^{-\norm{\GaussianRV}^2/2} \rmd \GaussianRV \eqsp.
  \end{align*}
  By \eqref{eq:hyp:gaussian_prop_mu}, $ \limsup_{\norm{x} \to
    \plusinfty}(2\norm{\mu(\varx)} \norm{S(\varx)} +
  \norm{S(\varx)}^2)\norm{x}^{-2} = 0  $. Therefore, using again
  the first inequality of \eqref{eq:hyp:gaussian_prop_mu} and  Assumption \ref{hyp:ratio_convergence_bO}:
\begin{equation*}
\limsup_{\norm{\varx} \to \plusinfty} PV(\varx) /V(\varx) \leq 1-(1-\tau^2) \liminf _{\norm{\varx} \to \plusinfty} \int_{\IR^d} \alpha(\varx,\vary) q(\varx,\vary) \rmd
\vary < 1 \eqsp.
% \leq  1-(1-\tau^2)\kappa \eqsp,
\end{equation*}
%This concludes the proof upon noting $1 - (1-\tau^2) \kappa < 1$.
 This concludes the proof  of Lemma~\ref{propo:convergence_gaussian_proposal}.\qed
%\end{proof}
\subsection{Proof of Theorem~\ref{theo:non_geo_erg_Gaussian_MH}}
%\begin{proof}[Proof of Theorem~\ref{theo:non_geo_erg_Gaussian_MH}]
\label{subsec:proof_theo_non_geo}
We prove this result by contradiction. The strategy of the proof is the following: first, under our
assumptions, most of the proposed moves by the algorithm has a norm which is
greater than the current point. However,  if $P$ is geometrically ergodic, then it implies a  upper bound on the rejection
probability of the algorithm by some constant strictly smaller than $1$. But combining these facts, we can exhibit a sequence of point
$\{ x_n , n \in \nset \}$, such that $\lim_{n \to \plusinfty} \pi(x_n) =
\plusinfty$. Since we assume that $\pi$ is bounded, we have our contradiction.

If $P$ is geometrically ergodic, then by
\cite[Theorem 5.1]{roberts:tweedie:1996}, there exists $\eta >0$ such that for almost every $x \in
\rset^d$,
\begin{equation}
  \label{eq:bound_rejection_proba}
\int_{\rset^d} \alpha(x,y) q(x,y) \rmd y \geq  \eta \eqsp,
\end{equation}
and let $M \geq 0$ such that
\begin{equation}
\label{eq:def_M_non_geo}
\mathbb{P}\left[ \norm{\xi} \geq M \right] \leq \eta /2 \eqsp,
\end{equation}
where $\xi$ is a standard $d$-dimensional Gaussian random variable. 
By \eqref{eq:assum_theo_non_geo}, there
exist $\Repsilon , \delta > 0$ such that
\begin{align}
  \label{def_R_epsilon_non_geo1}
\inf_{\{ \norm{x} \geq \Repsilon \}} \norm{S(x)^{-1} \mu(x)}\norm{x}^{-1} &\geq
\epsilon^{-1} + \delta \ \\
  \label{def_R_epsilon_non_geo2}
 \inf_{\{ \norm{x} \geq \Repsilon \}} \inf_{\norm{z}
    = 1} \norm{S(x)z} &\geq \epsilon (1+ \delta\epsilon/2)^{-1} \eqsp.
\end{align}
Note that we can assume $\Repsilon$ is large enough so that
\begin{equation}
  \label{eq:condition_R_epsi}
\epsilon \delta R _\epsilon/2
\geq M   \eqsp.
\end{equation}
Now define for $x \in \rset^d$, $\norm{x} \geq \Repsilon$
\begin{equation}
  \label{eq:defBx}
B(x) = \left\lbrace y \in \rset^d \ |  \   \norm{S(x)^{-1} (y - \mu(x) )} \leq M
\right\rbrace \eqsp.
\end{equation}
Note if $y \in B(x)$, we have by definition and the triangle inequality $\norm{S(x)^{-1}y } \geq \norm{S(x)^{-1}
  \mu(x)} -M$. Therefore by   \eqref{def_R_epsilon_non_geo1}-\eqref{def_R_epsilon_non_geo2} and \eqref{eq:condition_R_epsi}
\begin{multline}
  \label{eq:norm_propo_greater}
\norm{y} = \norm{S(x) S(x)^{-1} y } \geq \epsilon (1+ \delta\epsilon/2)^{-1} \norm{S(x)^{-1} y} \\
\geq
\epsilon (1+ \delta\epsilon/2)^{-1} \defEns{(\epsilon^{-1}+\delta)\norm{x} -M} \geq \norm{x} \eqsp.
  \end{multline}
  We then show that this inequality implies
\begin{equation}
  \label{eq:ratio_q_zeros}
  \liminf_{\norm{x} \to \plusinfty} \inf_{y \in
  B(x)} \frac{q(y,x)}{q(x,y)} = 0 \eqsp.
\end{equation}
Let $x \in \rset^d$, $\norm{x}\geq R_{\epsilon}$, $y \in B(x)$. First, it is straightforward by \eqref{eq:defBx}, that $\abs{S(x)}q(x,y)$ is uniformly
bounded away from $0$, and it suffices to consider $\abs{S(x)}q(y,x)$. By
\eqref{def_R_epsilon_non_geo2}-\eqref{eq:norm_propo_greater}, we have $\norm{y} \geq \Repsilon$ and for all $z \in
\rset^d$, $\norm{S(y)z} \geq \epsilon (1+ \delta\epsilon/2)^{-1} \norm{z}$, which implies for all $z \in \rset^d$,
$\epsilon^{-1}(1+ \delta\epsilon/2)\norm{z} \geq \norm{S(y)^{-1} z}$. By this inequality and  \eqref{def_R_epsilon_non_geo1}, we have 
\begin{multline}
  \label{eq:bounded_away}
\abs{\norm{S(y)^{-1}\mu(y)} - \norm{S(y)^{-1} x}} \geq \norm{S(y)^{-1}\mu(y)} - \norm{S(y)^{-1} x} \\
\geq  ( \epsilon^{-1} + \delta) \norm{y} - \epsilon^{-1}(1+ \delta\epsilon/2)\norm{x} \geq
(\delta/2 ) \norm{y} \eqsp,
\end{multline}
where the last inequality follows from \eqref{eq:norm_propo_greater}.
% Besides by \eqref{def_R_epsilon_non_geo2}, we have that all the eigenvalues of $S(x)$ are greater than $\epsilon (1+ \delta\epsilon/2)^{-1}$, therefore $\abs{S(x)} \geq \abs{\epsilon (1+ \delta\epsilon/2)^{-1}}^d$.
Using this result, the triangle 
inequality,   \eqref{eq:bounded_away}-\eqref{def_R_epsilon_non_geo2} and \eqref{eq:norm_propo_greater}, we get
\begin{align*}
q(y,x) &= (2\uppi)^{-d/2}\exp \defEns{-(1/2) \norm{S(y)^{-1} (x-\mu(y))}^2 - \log( \abs{S(y) } ) } \\
& \leq  (2\uppi)^{-d/2}\exp \defEns{-(1/2)\bracket{\norm{S(y)^{-1}\mu(y)} - \norm{S(y)^{-1} x}}^2 - \log(
  \abs{S(y) } ) } \\
% & \leq   (2\uppi)^{-d/2}\exp \defEns{-(1/2)((\epsilon^{-1} + \delta)\norm{y} - \epsilon^{-1}(1+ \delta\epsilon/2)\norm{y})^2 - \log(
%   \abs{S(y)} ) } \\
& \leq   (2\uppi)^{-d/2}\exp \defEns{-(\delta^2/8) \norm{y}^2 - \log(
  \abs{S(y) } ) } \\
& \leq   (2\uppi)^{-d/2}\exp \defEns{-(\delta^2/8) \norm{x}^2 - d \log(\epsilon (1+ \delta\epsilon/2)^{-1} ) } \eqsp.
  \end{align*}
  Using this inequality and \eqref{eq:assum_theo_non_geo2} imply $\lim_{\norm{x} \to
    \plusinfty} \inf_{y \in B(x)} \abs{S(x)}q(y,x) =0$ and then
  \eqref{eq:ratio_q_zeros}. Therefore there exists $\Rqtransition \geq 0$ such that for all $x \in
  \rset^d$, $\norm{x} \geq \Rqtransition$
  \begin{equation}
\label{eq:def_Rq}
\inf_{y \in B(x)} \frac{q(y,x)}{q(x,y)}  \leq \eta/4 \eqsp.
    \end{equation}
    Now we are able to build the sequence $\{ x_n , n \in \nset\}$ such that for all $n
    \in \nset$, $\norm{x_{n+1}} \geq \max( \Repsilon, \Rqtransition)$ and $\lim_{n \to \plusinfty} \pi(x_n) =
    \plusinfty$. Indeed let $x_0 \in \rset^d$ such that $\norm{x_0} \geq \max( \Repsilon,
    \Rqtransition)$. Assume, we have built the sequence up to the $n$th term and such that for all
    $k=0, \dots, n-1$, $\norm{x_{k+1}} \geq \max( \Repsilon, \Rqtransition)$ and $\pi(x_{k+1}) \geq
    (3/2) \pi(x_k)$. Now we choose $x_{n+1}$ depending
    on $x_n$, satisfying $\pi(x_{n+1}) \geq (3/2) \pi(x_n)$ and $ \norm{x_{n+1}} \geq \max( \Repsilon, \Rqtransition)$. Since
    $\norm{x_n} \geq  \max( \Repsilon, \Rqtransition)$, by
    \eqref{eq:bound_rejection_proba}-\eqref{eq:def_M_non_geo} and \eqref{eq:def_Rq}
  \begin{align*}
\eta &\leq \int_{\rset^d} \alpha(x_n,y) q(x_n,y) \rmd y  \leq \eta/2 + \int_{B(x_n)}
\min \left(1, \frac{\pi(y)q(y,x_n)}{\pi(x_n) q(x_n,y)} \right)  q(x_n,y) \rmd y \\
& \leq \eta/2 + ( \eta/4)\int_{B(x_n)}\frac{\pi(y)}{\pi(x_n)} q(x_n,y) \rmd y \eqsp.
    \end{align*}
    This inequality implies that $\int_{B(x_n)}\frac{\pi(y)}{\pi(x_n)} q(x_n,y) \rmd y  \geq
    2$ and therefore there exists $x_{n+1} \in B(x_n)$ such that $\pi(x_{n+1})
    \geq (3/2)\pi(x_n)$, and since $x_{n+1} \in B(x_n)$ by \eqref{eq:norm_propo_greater},
    $\norm{x_{n+1}} \geq  \max( \Repsilon, \Rqtransition)$. Therefore, we have a sequence $\{x_n ,
    n \in \nset \}$ such that for all $n \in \nset$, $ \pi(x_{n+1}) \geq (3/2)\pi(x_n)$. Since
    by assumption $\pi(x_0) >0$, we get $\lim_{n \to \plusinfty} \pi(x_n) = \plusinfty$,
    which contradicts the assumption that $\pi$ is bounded.
  %\end{proof}
This concludes the proof  of Theorem~\ref{theo:non_geo_erg_Gaussian_MH}.\qed

\section{Expressions of $C_5 ^\bullet(x_1,\xi_1)$}
\label{app:expression_C5}
\begin{multline*}
C_5^{\mMALA}(x_1,\xi_1) = \frac{\ell^5}{720} \left(\xi_1^5 g^{(5)}(x_1)+5 \xi_1^3 g^{(5)}(x_1)+15 \xi_1^3 g^{(4)}(x_1) g'(x_1) \right.\\
+15 \xi_1 g^{(4)}(x_1) g'(x_1)+30 \xi_1^3 g^{(3)}(x_1) g''(x_1)\\
\left. +10 \xi_1 g^{(3)}(x_1) g''(x_1)+30 \xi_1 g^{(3)}(x_1) g'(x_1)^2+35 \xi_1 g'(x_1) g''(x_1)^2\right)
\end{multline*}

  \begin{multline*}
      C_5^{\mO}(x_1,\xi_1) = \ell^5 \left( \frac{1}{720} \xi_1^5 g^{(5)}(x_1)+\frac{1}{144}
    \xi_1^3 g^{(5)}(x_1)+ \right.
     \frac{1}{48} \xi_1^3 g^{(4)}(x_1)
    g'(x_1)\\
    +\frac{1}{48} \xi_1 g^{(4)}(x_1) g'(x_1)
     +\frac{29}{144}
    \xi_1^3 g^{(3)}(x_1) g''(x_1) - \frac{7}{48} \xi_1
    g^{(3)}(x_1) g''(x_1) \\
    +\frac{1}{24} \xi_1 g^{(3)}(x_1)
    g'(x_1)^2+ \left. \frac{1}{6} \xi_1 g'(x_1) g''(x_1)^2 \right) \eqsp.
\end{multline*}

\begin{multline*}
  C_5^{\bO}(x_1,\xi_1) =  \ell^5 \left( \frac{1}{720} \xi_1^5 g^{(5)}(x_1)+\frac{1}{144}
  \xi_1^3 g^{(5)}(x_1)+\frac{1}{48} \xi_1^3 g^{(4)}(x_1)
  g'(x_1) \right.  \\
+\frac{1}{48} \xi_1 g^{(4)}(x_1) g'(x_1) +\frac{29}{144}
  \xi_1^3 g^{(3)}(x_1) g''(x_1)-\frac{19}{144} \xi_1
  g^{(3)}(x_1) g''(x_1)  \\
\left.   +\frac{1}{24} \xi_1 g^{(3)}(x_1)
  g'(x_1)^2+\frac{1}{6} \xi_1 g'(x_1) g''(x_1)^2 \right) \eqsp.
\end{multline*}

\begin{multline*}
   C_5^{\gbO}(x_1,\xi_1) =  \ell^5 \left( \frac{1}{720} \xi_1^5 g^{(5)}(x_1)+\frac{1}{144}
  \xi_1^3 g^{(5)}(x_1)+\frac{1}{48} \xi_1^3 g^{(4)}(x_1)
  g'(x_1) \right. \\
  +\frac{1}{48} \xi_1 g^{(4)}(x_1) g'(x_1)+\frac{1}{72}
  a_3 \xi_1 g^{(3)}(x_1) g''(x_1)+\frac{1}{6} a_4^2
  \xi_1^3 g^{(3)}(x_1) g''(x_1)\\
  -\frac{1}{6} a_4^2 \xi_1
  g^{(3)}(x_1) g''(x_1)+\frac{5}{144} \xi_1^3 g^{(3)}(x_1)
  g''(x_1)+\frac{1}{48} \xi_1 g^{(3)}(x_1) g''(x_1)\\
   +\frac{1}{24}
  \xi_1 g^{(3)}(x_1) g'(x_1)^2-\frac{1}{24} a_1^2 \xi_1
  g'(x_1) g''(x_1)^2  +\frac{1}{6} a_4^2 \xi_1 g'(x_1)
   g''(x_1)^2 \\
   \left. +\frac{1}{24} \xi_1 g'(x_1) g''(x_1)^2  \right) \eqsp.
\end{multline*}

%%% Local Variables:
%%% mode: latex
%%% TeX-master: "main"
%%% End:

\section{Expressions of $K^{\bullet}$}
\allowdisplaybreaks

\label{app:expression_K}
We provide here the expressions of the quantities $K^{\bullet}$ involved in Theorems \ref{theo:accept_ratio_fMALA}, \ref{theo:scaling:fMALA}, \ref{theo:accept_ratio_gbO}, \ref{theo:scaling:gbO}.
Let $\xrm$ be a random variable distributed according to $\pi_1$.
\begin{align*}
&K^{\mMALA} =\mathbb{E} \left[ \frac{79 g^{(5)}(\xrm)^2}{17280}+\frac{11 g^{(4)}(\xrm)^2 g'(\xrm)^2}{1152} +\frac{77 g^{(3)}(\xrm)^2 g''(\xrm)^2}{2592}+\frac{1}{576} g^{(3)}(\xrm)^2 g'(\xrm)^4 \right. \\
&+\frac{49 g'(\xrm)^2 g''(\xrm)^4}{20736}+\frac{7}{576} g^{(4)}(\xrm) g^{(5)}(\xrm) g'(\xrm)+\frac{19}{864} g^{(3)}(\xrm) g^{(5)}(\xrm) g''(\xrm) \\
&+\frac{1}{288} g^{(3)}(\xrm) g^{(5)}(\xrm) g'(\xrm)^2+\frac{7 g^{(5)}(\xrm) g'(\xrm) g''(\xrm)^2}{1728}\\
&+\frac{1}{144} g^{(3)}(\xrm) g^{(4)}(\xrm) g'(\xrm)^3+\frac{7}{864} g^{(4)}(\xrm) g'(\xrm)^2 g''(\xrm)^2+\frac{7 g^{(3)}(\xrm) g'(\xrm)^3 g''(\xrm)^2}{1728}\\
&\left. +\frac{5}{432} g^{(3)}(\xrm)^2 g'(\xrm)^2 g''(\xrm) +\frac{35 g^{(3)}(\xrm) g'(\xrm) g''(\xrm)^3}{2592}+\frac{29}{864} g^{(3)}(\xrm) g^{(4)}(\xrm) g'(\xrm) g''(\xrm) \right] \eqsp.
\end{align*}

\begin{align*}
  &K^{\mO} = \mathbb{E} \left[ \frac{79 g^{(5)}(\xrm)^2}{17280} + \frac{11
      g^{(4)}(\xrm)^2 g'(\xrm)^2}{1152}+\frac{1567 g^{(3)}(\xrm)^2 g''(\xrm)^2}{3456} \right. \\
  & + \frac{1}{576} g^{(3)}(\xrm)^2 g'(\xrm)^4+\frac{1}{36} g'(\xrm)^2
  g''(\xrm)^4+\frac{7}{576} g^{(4)}(\xrm) g^{(5)}(\xrm) g'(\xrm) \\
  &+\frac{17}{192} g^{(3)}(\xrm) g^{(5)}(\xrm) g''(\xrm)+\frac{1}{288} g^{(3)}(\xrm) g^{(5)}(\xrm)
  g'(\xrm)^2 \\
  &+\frac{1}{72} g^{(5)}(\xrm) g'(\xrm)
  g''(\xrm)^2+\frac{1}{144} g^{(3)}(\xrm) g^{(4)}(\xrm) g'(\xrm)^3 + \\
  &\frac{1}{36} g^{(4)}(\xrm) g'(\xrm)^2 g''(\xrm)^2+\frac{1}{72} g^{(3)}(\xrm) g'(\xrm)^3 g''(\xrm)^2 \\
  &+\frac{11}{288} g^{(3)}(\xrm)^2 g'(\xrm)^2 g''(\xrm) +\frac{11}{72} g^{(3)}(\xrm) g'(\xrm)
  g''(\xrm)^3+ \\
  & \left. \frac{73}{576} g^{(3)}(\xrm) g^{(4)}(\xrm) g'(\xrm) g''(\xrm) \right] \eqsp.
  \end{align*}
%
%  \begin{align*}
% &K^{\bO} =  \mathbb{E} \left[
% \frac{79 g^{(5)}(\xrm)^2}{17280}+\frac{11 g^{(4)}(\xrm)^2
%   g'(\xrm)^2}{1152}+\frac{4835 g^{(3)}(\xrm)^2
%   g''(\xrm)^2}{10368} \right.\\
% &+\frac{1}{576} g^{(3)}(\xrm)^2
% g'(\xrm)^4+\frac{1}{36} g'(\xrm)^2 g''(\xrm)^4+\frac{7}{576}
% g^{(4)}(\xrm) g^{(5)}(\xrm) g'(\xrm)\\
% &+\frac{155 g^{(3)}(\xrm)
%   g^{(5)}(\xrm) g''(\xrm)}{1728}+\frac{1}{288} g^{(3)}(\xrm)
% g^{(5)}(\xrm) g'(\xrm)^2\\
% &+\frac{1}{72} g^{(5)}(\xrm) g'(\xrm)
% g''(\xrm)^2+\frac{1}{144} g^{(3)}(\xrm) g^{(4)}(\xrm)
% g'(\xrm)^3\\
% &+\frac{1}{36} g^{(4)}(\xrm) g'(\xrm)^2
% g''(\xrm)^2+\frac{1}{72} g^{(3)}(\xrm) g'(\xrm)^3
% g''(\xrm)^2\\
% &+\frac{17}{432} g^{(3)}(\xrm)^2 g'(\xrm)^2
% g''(\xrm)+\frac{17}{108} g^{(3)}(\xrm) g'(\xrm) g''(\xrm)^3\\
% & \left. +\frac{223
%   g^{(3)}(\xrm) g^{(4)}(\xrm) g'(\xrm) g''(\xrm)}{1728} \right] \eqsp.
%\end{align*}

\begin{align*}
 &K^{\gbO} =
  \mathbb{E} \left[\frac{1}{36} g'(\xrm)^2 g''(\xrm)^4 a_4^4+\frac{5}{18} g''(\xrm)^2
  g^{(3)}(\xrm)^2 a_4^4 \right. \\
  &+\frac{1}{9} g'(\xrm) g''(\xrm)^3
  g^{(3)}(\xrm) a_4^4-\frac{1}{72} a_1^2 g'(\xrm)^2 g''(\xrm)^4
  a_4^2+\frac{1}{72} g'(\xrm)^2 g''(\xrm)^4 a_4^2\\
  &+\frac{11}{72}
  g''(\xrm)^2 g^{(3)}(\xrm)^2 a_4^2+\frac{1}{108} a_3
  g''(\xrm)^2 g^{(3)}(\xrm)^2 a_4^2\\
  &+\frac{1}{36} g'(\xrm)^2
  g''(\xrm) g^{(3)}(\xrm)^2 a_4^2-\frac{1}{36} a_1^2 g'(\xrm)
  g''(\xrm)^3 g^{(3)}(\xrm) a_4^2\\
  &+\frac{5}{72} g'(\xrm)
  g''(\xrm)^3 g^{(3)}(\xrm) a_4^2+\frac{1}{216} a_3 g'(\xrm)
  g''(\xrm)^3 g^{(3)}(\xrm) a_4^2 \\
  &+\frac{1}{72} g'(\xrm)^3
  g''(\xrm)^2 g^{(3)}(\xrm) a_4^2+\frac{1}{36} g'(\xrm)^2
  g''(\xrm)^2 g^{(4)}(\xrm) a_4^2\\
  &+\frac{7}{72} g'(\xrm)
  g''(\xrm) g^{(3)}(\xrm) g^{(4)}(\xrm) a_4^2+\frac{1}{72}
  g'(\xrm) g''(\xrm)^2 g^{(5)}(\xrm) a_4^2\\
  &+\frac{5}{72}
  g''(\xrm) g^{(3)}(\xrm) g^{(5)}(\xrm) a_4^2+\frac{1}{576}
  a_1^4 g'(\xrm)^2 g''(\xrm)^4\\
  &-\frac{1}{288} a_1^2 g'(\xrm)^2
  g''(\xrm)^4+\frac{1}{576} g'(\xrm)^2 g''(\xrm)^4+\frac{1}{576}
  g'(\xrm)^4 g^{(3)}(\xrm)^2\\
  &+\frac{a_3^2 g''(\xrm)^2
    g^{(3)}(\xrm)^2}{5184}
  +\frac{1}{288} a_3 g''(\xrm)^2
  g^{(3)}(\xrm)^2\\
  &+\frac{79 g''(\xrm)^2 g^{(3)}(\xrm)^2}{3456}+\frac{1}{96}
  g'(\xrm)^2 g''(\xrm) g^{(3)}(\xrm)^2\\
  &+\frac{1}{864} a_3
  g'(\xrm)^2 g''(\xrm) g^{(3)}(\xrm)^2+\frac{11 g'(\xrm)^2
    g^{(4)}(\xrm)^2}{1152}\\
  &+\frac{79 g^{(5)}(\xrm)^2}{17280}-\frac{1}{96}
  a_1^2 g'(\xrm) g''(\xrm)^3 g^{(3)}(\xrm)\\
  &+\frac{1}{96} g'(\xrm)
  g''(\xrm)^3 g^{(3)}(\xrm)-\frac{1}{864} a_1^2 a_3 g'(\xrm)
  g''(\xrm)^3 g^{(3)}(\xrm)\\
  &+\frac{1}{864} a_3 g'(\xrm)
  g''(\xrm)^3 g^{(3)}(\xrm)-\frac{1}{288} a_1^2 g'(\xrm)^3
  g''(\xrm)^2 g^{(3)}(\xrm)\\
  &+\frac{1}{288} g'(\xrm)^3 g''(\xrm)^2
  g^{(3)}(\xrm)-\frac{1}{144} a_1^2 g'(\xrm)^2 g''(\xrm)^2
  g^{(4)}(\xrm)\\
  &+\frac{1}{144} g'(\xrm)^2 g''(\xrm)^2
  g^{(4)}(\xrm)+\frac{1}{144} g'(\xrm)^3 g^{(3)}(\xrm)
  g^{(4)}(\xrm)\\
  &+\frac{17}{576} g'(\xrm) g''(\xrm) g^{(3)}(\xrm)
  g^{(4)}(\xrm)+\frac{1}{432} a_3 g'(\xrm) g''(\xrm)
  g^{(3)}(\xrm) g^{(4)}(\xrm)\\
  &-\frac{1}{288} a_1^2 g'(\xrm)
  g''(\xrm)^2 g^{(5)}(\xrm)+\frac{1}{288} g'(\xrm) g''(\xrm)^2
  g^{(5)}(\xrm)+\\
  &\frac{1}{288} g'(\xrm)^2 g^{(3)}(\xrm)
  g^{(5)}(\xrm)+\frac{11}{576} g''(\xrm) g^{(3)}(\xrm)
  g^{(5)}(\xrm)\\
  &\left. +\frac{1}{864} a_3 g''(\xrm) g^{(3)}(\xrm)
  g^{(5)}(\xrm)+\frac{7}{576} g'(\xrm) g^{(4)}(\xrm) g^{(5)}(\xrm) \right] \eqsp.
    \end{align*}

%%% Local Variables:
%%% mode: latex
%%% TeX-master: "main"
%%% End:

\end{document}